\def\bm#1{\mbox{\boldmath{$#1$}}}
\renewcommand{\cite}{\citep}
\DeclareMathOperator*{\argmin}{arg\,min}
\newtheorem{lemma}{Lemma}
\newtheorem{theorem}{Theorem}
\newtheorem{prop}{Proposition}
\renewcommand{\@fnsymbol}[1]{\@arabic{#1}}
\title{Efficient nonparametric estimation of Toeplitz covariance matrices}
\author{Karolina Klockmann\footnote{Department of Statistics and Operations Research, Universit\"at Wien,
	Oskar-Morgenstern-Platz 1, 1090 Wien, Austria }
\and Tatyana Krivobokova$^{1}$}
\begin{document}
\maketitle

\begin{abstract}
	\baselineskip=15pt \noindent 
	\\
	A new efficient nonparametric estimator for Toeplitz covariance matrices is proposed. This estimator is based on a data transformation that translates the problem of Toeplitz covariance matrix estimation to the problem of mean estimation in an approximate Gaussian regression. The resulting Toeplitz covariance matrix estimator is positive definite by construction, fully data-driven and computationally very fast. 
	Moreover, this estimator is shown to be minimax optimal under the spectral norm for a large class of Toeplitz matrices. These results are readily extended to estimation of inverses of Toeplitz covariance matrices. 
	Also, an alternative version of the Whittle likelihood for the spectral density based on the discrete cosine transform is proposed. The method is implemented in the R package \texttt{vstdct} that accompanies the paper.
	\\\\
	{\textit{Keywords:}} 	Discrete cosine transform;  Periodogram; Spectral density;  Variance-stabilizing transform; Whittle likelihood.
\end{abstract}
\baselineskip=20pt

\section{Introduction}
\label{sec:intro}
Estimation of covariance and precision matrices is a fundamental problem in statistical data analysis with countless applications in the natural and social sciences. A special type of covariance matrices that have each descending diagonal constant, known as Toeplitz matrices, arise in the study of stationary stochastic processes. Stationary stochastic processes are an important modeling tool in many applications, such as radar target detection, speech recognition,  modeling internet economic activity, electrical brain activity or the motion of crystal structures \citep[p.232]{du2020toeplitz, roberts2000hidden,quah2000internet,franaszczuk1985application,grenander1958toeplitz}.

The data for estimation are given as $n$ independent and identically distributed realizations of a $p$-dimensional vector having a zero mean and a Toeplitz covariance matrix $\Sigma=(\sigma_{|i-j|})_{i,j=1}^p$. Thereby, $p$ is assumed to grow while $n$ may be equal to $1$ or may tend to infinity as well. 
For $p\to \infty$ and $p/n \to c \in (0,\infty]$, the sample (auto-)covariance matrix is known to be an inconsistent estimator of $\Sigma$ in the spectral norm \citep[see e.g.,][chap. 2]{wu2009banding,pourahmadi2013high}.
Therefore, tapering, banding and thresholding of the sample covariance matrix have been proposed to regularize this estimator \citep[see][]{wu2012covariance,cai2013optimal}. The optimal rate of convergence for Toeplitz covariance matrix estimators was established in \citet{cai2013optimal}, who in particular showed that tapering and banding estimators attain the minimax optimal convergence rate over certain spaces of Toeplitz covariance matrices. Optimality of the thresholded estimator was shown only for the case $n=1$ \citep[see][]{wu2012covariance}.  However, all of these estimators have several practical drawbacks that affect their performance in small samples. 
First, additional manipulations with the estimators must be performed to enforce positive definiteness, see e.g., Section 5 in \citet{cai2013optimal}.  Second, the data-driven choice of the tapering, banding or thresholding parameter is not trivial in practice. For $n>1$, \citet{bickel2008regularized} proposed a cross-validation criterion that approximates the risk of the estimator. \citet{fang2016tuning} compared this method with a bootstrap based approximation of the risk in an intensive simulation study and recommended cross-validation over bootstrap. However, for small $n$ a cross-validated tuning parameter turns out to be very variable, while already for moderate $n$ it becomes numerically very demanding.  
For $n=1$, to the best of our knowledge, there is no fully data-driven approach for selecting the banding/tapering/thresholding parameter available. \citet{wu2009banding} suggested first to split the time series into non-overlapping subseries and then apply the cross-validation criterion of \citet{bickel2008regularized}.  However, it turns out that the appropriate choice of the subseries length is crucial for this approach, but cannot be done data-driven. 

In this work, an alternative way to estimate a Toeplitz covariance matrix and its inverse is proposed. Our approach exploits the one-to-one correspondence between Toeplitz covariance matrices and their spectral densities. First, the given data are transformed into approximate Gaussian random variables whose mean equals to the logarithm of the spectral density. Then, the log-spectral density is estimated by a periodic smoothing spline with a data-driven smoothing parameter. Finally, the resulting spectral density estimator is transformed into an estimator for $\Sigma$ or its inverse. It is shown that this procedure leads to an estimator that is fully data-driven, automatically positive definite and achieves the minimax optimal convergence rate under the spectral norm over a large class of Toeplitz covariance matrices. In particular, this class includes Toeplitz covariance matrices that correspond to long-memory processes with bounded spectral densities. Moreover, the computation is very efficient, does not require iterative or resampling schemes and allows to apply any inference and adaptive estimation procedures developed in the context of nonparametric Gaussian regression.  

Estimation of the spectral density from a single stationary time series is a research topic with a long history. Earlier nonparametric methods are based on smoothing the (log-)periodogram, which itself is not a consistent estimator \citep[][]{bartlett1950periodogram, welch1967use, thomson1982spectrum,wahba1980automatic}. Another line of nonparametric methods for estimating the spectral density is based on the Whittle likelihood, which is an approximation to the exact likelihood of the time series in the frequency domain. For example, \citet{pawitan1994nonparametric} estimated the spectral density from a penalized Whittle likelihood, while \citet{kooperberg1995rate} used polynomial splines to estimate the log-spectral density function maximizing the Whittle likelihood.
Recently, Bayesian methods for spectral density estimation have been proposed  \citep[see][]{choudhuri2004bayesian,edwards2019bayesian,maturana2021bayesian}, but these may become very computationally intensive in large samples due to posterior sampling.  

The minimax optimal convergence rate for nonparametric estimators of a H\"older continuous spectral density from a single Gaussian stationary time series was obtained by \citet{bentkus1985rate} under the $L_p$ norm, $1\leq p\leq \infty$. 
Only a few works on spectral density estimation show the optimality of the corresponding estimators. In particular, \citet{kooperberg1995rate}  and \citet{pawitan1994nonparametric} derived convergence rates of their estimators for the log-spectral density under the $L_2$ norm, while neglecting the Whittle likelihood approximation error. 

In general, most works on spectral density estimation 
do not exploit further the close connection to the corresponding Toeplitz covariance matrix estimation. In particular, an upper bound for the $L_\infty$ risk of a spectral density estimator automatically provides an upper bound for the risk of the corresponding Toeplitz covariance matrix estimator under the spectral norm. This fact is used to establish the minimax optimality of our nonparametric estimator for the Toeplitz covariance matrices. The main contribution of this work is to show that our proposed spectral density estimator is not only numerically very efficient, performing excellent in small samples, but also achieves the minimax optimal rate in the $L_\infty$ norm, which in turn ensures the minimax optimality of the corresponding Toeplitz covariance matrix estimator.

The paper is structured as follows. In Section~\ref{sec:likelihood}, the model is introduced and approximate diagonalization of Toeplitz covariance matrices with the discrete cosine transform is discussed. Moreover, an alternative version of the Whittle's likelihood is proposed. In Section~\ref{sec:method}, new estimators for the Toeplitz covariance matrix and the precision matrix are derived, while in Section~\ref{sec:theory} their theoretical properties are presented. In Section~\ref{sec:selection}, we  discuss how the parameters for our estimators are chosen. Section~\ref{sec:simulation} contains simulation results for Gaussian data and in Section~\ref{sec:nonGauss} the applicability of our method for non-Gaussian data is discussed. Section~\ref{sec:realdata} presents a real data example. The proofs are given in the appendix to the paper. Further simulation studies for non-Gaussian data are attached as supplementary material at the end.

\section{Set up and Diagonalization of Toeplitz Matrices}
\label{sec:likelihood}
Let $ Y_1,\ldots, Y_n\overset{\text{i.i.d.}}{\sim}{\mathcal{N}}_p( 0_p,\Sigma)$, where $\Sigma$ is a $p\times p$ positive semi-definite covariance matrix with a Toeplitz structure, that is, $\Sigma=(\sigma_{|i-j|})_{i,j=1}^p\succeq 0$. The sample size $n$ may tend to infinity or be a constant. The case $n=1$ corresponds to a single observation of a stationary time series and in this case the data are simply denoted by $Y\sim{\mathcal{N}}_p( 0_p,\Sigma)$.
The dimension $p$ is assumed to grow. 
The spectral density function $f$, corresponding to a Toeplitz covariance matrix $\Sigma$ with absolute summable sequence of covariances $(\sigma_k)_{k\in\mathbb{Z}}$, is given by
$$ f(x) = \sigma_0+2\sum_{k=1}^\infty\sigma_k\cos(kx), \quad x\in[-\pi,\pi].$$
The inverse Fourier transform implies
\begin{equation} \label{InvFourier}\sigma_k=\frac{1}{2\pi}\int_{-\pi}^\pi f(x)\cos(kx)\mbox{d}x=\int_0^1f(\pi x)\cos(k\pi x)\mbox{d}x.\end{equation}
If $\sum_{h=-\infty}^\infty |\sigma_h|=\infty$, i.e., the corresponding stochastic process is a long-memory process, the spectral density function is directly defined as a $2\pi$-periodic, non-negative function $f: [-\pi,\pi ] \to \mathbb{R}_{\geq0}$  that satisfies condition (\ref{InvFourier}). 
Hence, in case $f$ exists, $\Sigma$ is completely characterized by $f$. Furthermore, the non-negativity of the spectral density function implies the positive semi-definiteness of the covariance matrix. Moreover, the decay of the covariances $\sigma_k$ is directly connected to the smoothness of $f$. Finally, the convergence rate of a Toeplitz covariance estimator and that of the corresponding spectral density estimator are directly related via $\|\Sigma\| \leq \|f\|_\infty = \sup_{x\in[-\pi,\pi]}|f(x)|$, where $\|\cdot \|$ denotes the spectral norm \citep[see][chap.~5.2]{grenander1958toeplitz}.    

As in \citet{cai2013optimal}, we consider the class of positive semi-definite Toeplitz covariance matrices with H\"older continuous spectral densities. For $\beta=\gamma +\alpha>0$, where $\gamma\in\mathbb{N}\cup\{0\}$, $0<\alpha\leq 1$, and $0<M_0,M_1<\infty$, let
\begin{align*}
\mathcal{P}_\beta(M_0,M_1)= &\left \{ f \mid f: [-\pi,\pi]\to \mathbb{R}_{\geq 0}, \, \|f\|_{\infty}\leqslant M_0, \,  \|f^{(\gamma)}(\cdot+h)-f^{(\gamma)}(\cdot)\|_{\infty}\leqslant M_1|h|^{\alpha}\right \}.
\end{align*}
Furthermore, for $f\in\mathcal{P}_\beta(M_0,M_1)$  we denote by $\Sigma(f) \in \mathbb{R}^{p \times p} $ the corresponding $p\times p$ Toeplitz covariance matrix  obtained with the inverse Fourier transform (\ref{InvFourier}) and by $\Sigma^{-1}(f)$ the precision matrix.
The optimal convergence rate for estimating Toeplitz covariance matrices $\Sigma(f)$ where  $f\in {\mathcal{P}}_\beta(M_0,M_1)$ depends crucially on $\beta$. It is well known that the $k$th Fourier coefficient of a function whose $\gamma$th derivative is H\"older-continuous with exponent $\alpha\in(0,1]$ decays at least with order $\mathcal{O}(k^{-\beta})$ \citep[see][]{zygmund2002trigonometric}. Hence, $\beta$ determines the decay rate of the covariances $\sigma_k$, which are the Fourier coefficients of the spectral density $f$, as $k\rightarrow\infty$.  For $\beta\in(0,1/2]$, the class $\mathcal{P}_\beta(M_0,M_1)$ includes bounded spectral densities of certain long-memory processes. 

A connection between Toeplitz covariance matrices and their spectral densities is further exploited in the following lemma. 
\begin{lemma} \label{lemma:DCTdiag} 
	Let $\Sigma=\Sigma(f)$ with $f\in \mathcal{P}_\beta(M_0,M_1)$ and $x_j=(j-1)/(p-1)$ for $j=1,...,p$. Then 
	$$\left (D^T\Sigma D\right )_{i,j} = f(\pi x_j)\delta_{i,j} +\frac{1+(-1)^{|i-j|}}{2}\mathcal{O}\left(p^{-1}+ p^{-\beta}\log p\right ),$$
	where $\delta_{i,j}$ is the Kroneker delta, $\mathcal{O}(\cdot)$ terms are uniform over $i,j=1,\dots,p$ and  
	\begin{align*} 
	D&=\left(\frac{2}{p-1}\right)^{1/2}   \left [\cos \left \{\pi(i-1)\frac{j-1}{p-1} \right\} \right ]_{i,j=1}^p \text{ divided by } 2^{1/2} \text{ when } i \text{ or } j  \text { is } 1 \text{ or } p
	\end{align*}
	is the discrete cosine transform I matrix. 
\end{lemma}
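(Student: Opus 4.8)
The plan is to compute the $(i,j)$ entry of $D^T\Sigma D$ directly from the definition of the DCT-I matrix and the Toeplitz structure of $\Sigma$, and then to replace the resulting discrete trigonometric sum by an integral plus an error term, using the inverse Fourier formula \eqref{InvFourier} to identify the main term with $f(\pi x_j)$. Writing $c_k = (2/(p-1))^{1/2}\cos\{\pi k (j-1)/(p-1)\}$ with the boundary normalization, the product $(D^T\Sigma D)_{i,j}$ is a double sum $\sum_{k,\ell} c_{k-1}^{(i)}\,\sigma_{|k-\ell|}\,c_{\ell-1}^{(j)}$; the first step is to use the product-to-sum identity for the two cosines and $\sigma_{|k-\ell|}=\int_0^1 f(\pi x)\cos(|k-\ell|\pi x)\,dx$ to rewrite everything in terms of Dirichlet-type kernels. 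One then expects the diagonal contribution $i=j$ to produce (after summation of a Dirichlet kernel against $f$) the Riemann-sum approximation of $\int_0^1 f(\pi x)\,dx$ shifted to frequency $\pi x_j$, i.e.\ $f(\pi x_j)$, while the off-diagonal contribution is governed by how well the discrete cosine system at the nodes $x_j$ orthogonalizes — this is where the factor $\{1+(-1)^{|i-j|}\}/2$ appears, reflecting that the DCT-I nodes are only \emph{approximately} orthogonal with an aliasing term surviving when $i\equiv j \pmod 2$.

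Concretely, the key steps I would carry out, in order, are: (i) expand $(D^T\Sigma D)_{i,j}$ as a double cosine sum and apply product-to-sum to reduce the two cosine factors to cosines of sum and difference frequencies; (ii) interchange the finite sum with the integral in \eqref{InvFourier} and recognize the inner sums over $k,\ell$ as (finite) Dirichlet kernels $\sum_{m} \cos(m\theta)$; (iii) use the Euler–Maclaurin / Riemann-sum comparison to replace $\frac1p\sum_j f(\pi x_j)\,(\cdots)$ by $\int f$, picking up an $\mathcal{O}(p^{-1})$ error from the smoothness of $f$ (here only $f\in C[-\pi,\pi]$ bounded by $M_0$ is needed for the leading term, and the modulus-of-continuity/derivative bound from $\mathcal{P}_\beta$ enters the error); (iv) bound the remaining "non-resonant" Dirichlet-kernel sums: these do not collapse to a delta but instead contribute the $\mathcal{O}(p^{-\beta}\log p)$ term, coming from summing the tail $\sum_{|k|>N}|\sigma_k|$ with $|\sigma_k|=\mathcal{O}(k^{-\beta})$ against a kernel of size $\mathcal{O}(\log p)$; (v) assemble the diagonal main term $f(\pi x_j)\delta_{i,j}$ and collect all errors, checking they are uniform in $i,j$.

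The main obstacle I anticipate is step (iv): controlling the off-diagonal entries and the error in the diagonal entries simultaneously and \emph{uniformly} in $i,j$. The difficulty is that the DCT-I does not exactly diagonalize a Toeplitz matrix, so one must carefully track the aliasing/boundary terms produced by the endpoint normalization (the "divided by $2^{1/2}$" rule) — this is precisely what forces the peculiar coefficient $\{1+(-1)^{|i-j|}\}/2$ rather than a clean $\delta_{i,j}$. Separating the "circulant-like" part of $\Sigma$ (which the DCT handles exactly) from the genuinely Toeplitz correction, and bounding the latter using the decay $\sigma_k = \mathcal{O}(k^{-\beta})$ together with a dyadic split at level $N \asymp p$, is the technical heart; the logarithmic factor is the signature of summing $\sum_{k=1}^{p} k^{-1}$-type quantities that arise when the Dirichlet kernel is integrated against the low-frequency part of $f$. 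Once the split is set up correctly, the remaining estimates are routine bounds on partial sums of Fourier coefficients of Hölder functions, as in \citet{zygmund2002trigonometric}.
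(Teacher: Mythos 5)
Your proposal takes a genuinely different route from the paper's, and while the high-level intuition is sound, the critical step is left vague and the paper's key structural device is missing.

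The paper does \emph{not} work directly with the double cosine sum $\sum_{k,\ell} D_{ik}\sigma_{|k-\ell|}D_{\ell j}$ and Dirichlet kernels as you propose. Instead, it embeds $\Sigma=\mathrm{toep}(\sigma_0,\dots,\sigma_{p-1})$ into the $(2p-2)\times(2p-2)$ circulant $\widetilde\Sigma=\mathrm{toep}(\sigma_0,\dots,\sigma_{p-1},\sigma_{p-2},\dots,\sigma_1)$, diagonalizes $\widetilde\Sigma$ exactly with the $(2p-2)$-point DFT to get eigenvalues $\lambda_r$, restricts back via $\Sigma=V^*\Lambda V$ where $V$ is the first $p$ columns of the DFT, and then expands $(D\Sigma D)_{i,j}=\sum_r \lambda_r(\cdot)$. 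This buys two things immediately that your plan would have to extract by hand: (a) the symmetry $\lambda_r=\lambda_{2p-r}$ paired with the even/odd behavior of the auxiliary sums $b(j,r)$ and $c(j,r)$ makes the case $|i-j|$ odd vanish \emph{exactly}, which is how the factor $\{1+(-1)^{|i-j|}\}/2$ arises — it is not an approximate aliasing phenomenon as you suggest but an exact cancellation; (b) the quantity $\lambda_r$ is a partial Fourier sum of $f$ at the DFT nodes, so $\lambda_j=f(\pi x_j)+\mathcal O(p^{-\beta}\log p)$ follows directly from the classical uniform-convergence bound for Fourier series of H\"older functions, with the $\log p$ coming from the Lebesgue constant applied to the best trigonometric approximant (Jackson plus Lebesgue), not from ``summing the tail $\sum_{|k|>N}|\sigma_k|$ against a kernel of size $\mathcal O(\log p)$'' — the naive tail bound $\sum_{k>p}|\sigma_k|=\mathcal O(p^{1-\beta})$ is actually weaker than what is needed, and the cancellation in the cosine sum is essential.

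The concrete gap in your plan is step (iv): you acknowledge that ``controlling the off-diagonal entries \ldots uniformly in $i,j$'' is the obstacle and that ``separating the circulant-like part of $\Sigma$ \ldots is the technical heart,'' but you never say how to carry out that separation within the direct double-sum framework. Without the circulant embedding this is where the proof would stall, because the double sum does not factor into anything one can estimate term-by-term; the paper also needs explicit cotangent-based closed forms for the sums (\ref{coscos})--(\ref{sincos2}), Taylor expansion of $\cot$, and digamma-based closed forms for the residual series to get the uniform $\mathcal O(p^{-1})$ — none of which appear in a Euler--Maclaurin/Riemann-sum framework, which would only yield the diagonal main term. So the proposal is a plausible starting point with the right quantities in view, but it does not supply the key structural trick (circulant embedding) that converts the problem into one that can actually be estimated, and its explanation of the $\log p$ factor is mechanistically off even though the exponents come out right.
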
 

The proof can be found in Appendix ~\ref{app:lemma1}.
This result shows that the discrete cosine transform I matrix approximately diagonalizes Toeplitz covariance matrices and that the diagonalization error depends to some extent on the smoothness of the corresponding spectral density. 

In time series analysis the discrete Fourier transform matrix 
$F{=}p^{-1/2}\{\exp\left(2\pi \texttt{i} ij/p)\right\}_{i,j=1}^p,$
where $\texttt{i}$ is the imaginary unit, is typically employed to approximately diagonalize Toeplitz covariance matrices. Using the fact that $(F^T\Sigma F)_{i,i}=f(2\pi i/p)+o(1)$, \citet{whittle1957curve} introduced an approximation for the likelihood of a single Gaussian stationary time series (case $n=1$), the so-called Whittle likelihood
\begin{equation} \label{Whittle} 
\mathcal{L}( Y|f) 	\propto \exp \left \{ -\sum_{j=1}^{\left \lfloor p/2 \right \rfloor} \log f( 2\pi j/p) + \frac{I_j}{f(2\pi j/p )} \right \}.
\end{equation} 
The quantity  $I_j=|F_j^TY|^2$, where $F_j$ denotes the $j$th column of $F$, is known as the periodogram at the $j$th Fourier frequency.  Due to periodogram symmetry, only $\lfloor p/2\rfloor$ data points $I_1,...,I_{\lfloor p/2\rfloor}$ are available for estimating the mean $f(2\pi j/p)$ $(j=1,\ldots,\lfloor p/2\rfloor)$, where $\lfloor x\rfloor$ denotes the largest integer strictly smaller than $x$.
The Whittle likelihood has become a popular tool for parameter estimation of stationary time series, e.g., for nonparametric and parametric spectral density estimation or for estimation of the Hurst exponent, see e.g., \citet{walker1964asymptotic,taqqu1997robustness}.  

Lemma~\ref{lemma:DCTdiag} yields the following alternative version of the Whittle likelihood 
\begin{equation}\label{ourWhittle}
\mathcal{L}( Y|f) \propto \exp \left\{ -\sum_{j=1}^p \log f(\pi x_j) + \frac{W_j}{f(\pi x_j)} \right \}, 
\end{equation} 
where $W_j=({D}_j^TY)^2$, with $D_j$ denoting the $j$th column of $D$. Note that this likelihood approximation is based on twice as many data points $W_j$ as the standard Whittle likelihood. Thus, it allows for a more efficient use of the data $ Y$ to estimate the parameter of interest, such as the spectral density or the Hurst parameter. This is particularly advantageous in small samples. 

Equations (\ref{Whittle}) or (\ref{ourWhittle}) invite for the estimation of $f$ by maximizing the (penalized) likelihood over certain linear spaces, e.g., spline spaces, as suggested e.g., in \citet{kooperberg1995rate} or \citet{pawitan1994nonparametric}. However, such an approach requires well-designed numerical methods to solve the corresponding optimization problem, since the spectral density in the second term of (\ref{Whittle}) or (\ref{ourWhittle}) is in the denominator, which hinders derivation of a closed-form expression for the estimator and often leads to numerical instabilities. Also, the choice of the smoothing parameter becomes challenging. 

Therefore, we suggest an alternative approach that allows the spectral density to be estimated as a mean in an approximate Gaussian regression. Such estimators have a closed-form expression, do not require an iterative optimization algorithm and a smoothing parameter can be easily obtained with any conventional criterion. First note that if $Y\sim \mathcal{N}_p(0_p,\Sigma)$, with $\Sigma=\Sigma(f)$ and $f\in\mathcal{P}_\beta(M_0,M_1)$, then $D^TY\sim{\cal{N}}_p( 0_p,D^T \Sigma D)$. Hence, for $W_j=(D_j^TY)^2$ $(j=1,\ldots,p)$ it follows with Lemma~\ref{lemma:DCTdiag} that
\begin{equation} \label{ourGamma}
W_j \sim 
\Gamma \left\{1/2, 2f(\pi x_j)+\mathcal{O}\left(p^{-1}+p^{-\beta}\log p \right)\right \},  
\end{equation} 
where $\Gamma(a,b)$ denotes the gamma distribution with shape parameter $a$ and scale parameter $b$. The random variables $W_1,\ldots,W_p$ are only asymptotically independent. Obviously, $E(W_j)=f(\pi x_j)+o(1)$ for $j=1,\ldots,p$. To estimate $f$ from $W_1,\ldots,W_p$, one could use a generalized nonparametric regression framework with a gamma distributed response, see e.g., the classical monograph by \citet{hastie1990}. However, this approach  requires an iterative procedure for estimation, e.g., a Newton-Raphson algorithm, with a suitable choice for the smoothing parameter at each iteration step.  Deriving the $L_\infty$ rate for the resulting estimator is also not a trivial task. Instead, we suggest to employ a variance-stabilizing transform of \citet{cai2010nonparametricfest} that converts a gamma regression into an approximate Gaussian regression. 
In the next section we present the methodology in more detail for a general setting with $n\geq 1$.

\section{Methodology}
\label{sec:method}
Let $L_\delta=\{f:\inf_x f(x)\geq \delta\}$ for some $\delta>0$ and set ${\cal{F}}_\beta={\cal{P}}_\beta(M_0,M_1)\cap L_\delta$.
We consider estimation of $ \Sigma$ and $ \Omega= \Sigma^{-1}$ from a sample $ Y_1,..., Y_n\overset{\text{i.i.d.}}{\sim}\mathcal{N}_p( 0_p, \Sigma)$ where $\Sigma=\Sigma(f)$ with  $f\in{\cal{F}}_\beta$.
For $Y_i\sim  \mathcal{N}_p( 0_p,\Sigma)$ $(i=1,\ldots,n)$, it was shown in the previous section that with Lemma~\ref{lemma:DCTdiag} the data can be transformed into gamma distributed random variables $W_{i,j}=(D_j^TY_i)^2$ $(i=1,\ldots,n;\,j=1,\ldots,p)$, where for each fixed $i$ the random variable $W_{i,j}$ has the same distribution as $W_j$ given in (\ref{ourGamma}). Now the approach of \citet{cai2010nonparametricfest} is adapted to the setting $n\geq 1$. 

First, the transformed data points $W_{i,j}$ are binned, that is, fewer new variables $Q_k$ $(k=1,\ldots, T)$, with $T<p$, are built via $Q_k=\sum_{j=(k-1)p/T+1}^{kp/T}\sum_{i=1}^nW_{i,j}$ for $k=1,\ldots,T$. Note that the number of observations in a bin is $m=np/T$. In Theorem~\ref{theorem1} in Section \ref{sec:theory}, we show that setting $T=\lfloor p^\upsilon\rfloor$ for any $\upsilon\in(1-\min\{1,\beta\}/3,1)$ leads to the minimax optimal rate for the spectral density estimator. To simplify the notation, $m$ is handled as an integer (otherwise, one can discard several observations in the last bin).
Next, applying the variance-stabilizing transform $G(x)=2^{-1/2}\log\left(x/m\right)$ to each $Q_k$ yields new random variables $Y_k^*=2^{-1/2}\log \left (Q_k/m \right ) $ that are approximately Gaussian, as shown in \citet{cai2010nonparametricfest}. Since the spectral density is a function that is symmetric around zero and periodic on $[-\pi,\pi]$, one can mirror the resulting observations to use $ Y_{T}^*,\ldots,Y_2^*,Y_1^*,\ldots,Y_{T-1}^*$ for estimation. Renumerating the observations $Y_k^*$ and scaling the design points into the interval $[0,1)$ for convenience leads to an approximate Gaussian regression problem   
\begin{equation*}
Y_k^*\overset{\text{approx.}}{\sim} \mathcal{N} \left [ H\left\{f(x_k)\right\},m^{-1}\right ], \quad 
x_k=\frac{k-1}{2T-2} \quad (k=1,...,2T-2),
\end{equation*}
where $H(y)= 2^{-1/2}\left \{ \phi(m/2) + \log \left ( 2y/m\right ) \right\}$ and $\phi$ is the digamma function, see \citet[][]{cai2010nonparametricfest}. Now, the scaled and shifted log-spectral density $H(f)$ can be estimated with a periodic smoothing spline 
\begin{equation}
\label{eq:Spline}
\widehat{H(f)}(x)= \argmin_{s \in S_{\text{per}}(2q-1)}  \left [ \frac{1}{2T-2} \sum_{k=1}^{2T-2} \{ Y_k^* -s(x_k)\}^2 +h^{2q} \int_0^1 \{ s^{(q)}(x) \}^2 \, \mbox{d}x \right],
\end{equation}
where $h>0$ denotes a smoothing parameter, $q\in\mathbb{N}$ is the penalty order and $S_{\text{per}}(2q-1)$ is a space of periodic splines of degree $2q-1$. More details on periodic smoothing splines can be found in Appendix~\ref{app:SSper}.

Once an estimator $\widehat{H(f)}$ is obtained, application of the inverse transform function $H^{-1}(y)= m\exp  \{2^{1/2}y-\phi\left (m/2\right) \}/2$ yields the spectral density estimator $\hat{f}=H^{-1} \{\widehat{H(f)} \}$. Finally, the inverse Fourier transform leads to the following covariance matrix estimator
\begin{equation}
\label{eq:Sigma}
\widehat{\Sigma} = (\hat{\sigma}_{|i-j|})_{i,j=1}^p, \text{ with }\hat{\sigma}_k= \int_0^1 \hat{f}(x) \cos \left(k\pi x\right) \mbox{d}x\text{ for } k=0,....,p-1.
\end{equation}
The precision matrix $\Omega$ is  estimated by the inverse Fourier transform of the reciprocal of the spectral density estimator, i.e., 
\begin{equation}
\label{eq:Omega}
\widehat{\Omega} = (\hat{\omega}_{|i-j|})_{i,j=1}^p, \text{ with } \hat{\omega}_k = \int_0^1 \hat{f}(x)^{-1} \cos \left(k\pi x\right )\mbox{d}x \text{ for } k=0,....,p-1.
\end{equation}
The estimation procedure for $\widehat{\Sigma}$ and $\widehat{\Omega}$ can be summarized as follows.
\begin{enumerate}
	\item \textbf{Data transformation}: Define $W_{i,j}=(D_j^TY_i)^2$ $(i=1,\dots,n;\,j=1,\ldots,p)$, where $D$ is the $p\times p$ discrete cosine transform I matrix as given in Lemma~\ref{lemma:DCTdiag} and $D_j$ is its $j$th column.
	\item \textbf{Binning}: Set $T=\lfloor p^\upsilon\rfloor$ for any $\upsilon\in(1-\min\{1,\beta\}/3,1)$ and calculate
	$$Q_k=\sum_{j=(k-1)p/T+1}^{kp/T}\sum_{i=1}^n W_{i,j}, \quad (k=1,\ldots,T). $$
	\item \textbf{Variance-stabilizing transform}: Set $Y_k^*=2^{-1/2}\log \left (Q_k/m \right)$ for $k=1,...,T$ and $m =np/T$. Mirror the data to get $2T-2$ approximately Gaussian random variables $ Y_{T}^*,\ldots, Y_2^*,Y_1^*,\ldots, Y_{T-1}^*$. 
	\item \textbf{Gaussian regression}: Renumerate observations $Y^*_k$, scale the design points to $[0,1)$,  and estimate $H(f)$ with a periodic smoothing spline of degree $2q-1$ in an approximate Gaussian regression model 
	$$Y_k^*=H\{f(x_k)\}+\epsilon_k,\;\;x_k=\frac{k-1}{2T-2} \quad (k=1,\ldots,2T-2),$$ 
	where $\epsilon_k$ are asymptotically i.i.d. Gaussian variables.   
	\item \textbf{Inverse transform}:  Estimate the spectral density $f$ with $\hat{f}=H^{-1} \{\widehat{H(f)}  \},$ where \newline $H^{-1}(y)= m\exp \left \{2^{1/2}y-\phi\left (m/2\right) \right\}/2$, for a digamma function $\phi$. 
	\item \textbf{Estimators}:  Set $\widehat{\Sigma} = (\hat{\sigma}_{|i-j|})_{i,j=1}^p$ with $\hat{\sigma}_k= \int_0^1 \hat{f}(x) \cos\left (k\pi x\right) \mbox{d}x$ and   $\widehat{\Omega} = (\hat{\omega}_{|i-j|})_{i,j=1}^p$ with $\hat{\omega}_k = \int_0^1 \hat{f}(x)^{-1}\cos\left( k\pi x\right)\mbox{d}x$ $(k=0,...,p-1)$. 
\end{enumerate}

The estimators $\widehat{\Sigma}$ and $\widehat{\Omega}$ are positive definite matrices by construction, since the spectral density estimator $\hat{f}$ is non-negative by definition. For a detailed discussion on the choice of all parameters needed to obtain our estimators see Section \ref{sec:selection}.

\section{Theoretical Properties}
\label{sec:theory}
In this section, we study the asymptotic properties of the estimators $\hat{f}$, $\widehat{\Sigma}$ and $\widehat{\Omega}$. Let $\hat{f}=m\exp\{2^{1/2}\widehat{H(f)}-\phi(m/2)\}/2$ be the spectral density estimator defined in Section \ref{sec:method},
where $\widehat{H(f)}$ is given in (\ref{eq:Spline}), $m=np/T$ and $\phi$ is the digamma function. Furthermore, let $\widehat{\Sigma}$ be the Toeplitz covariance matrix estimator and $\widehat{\Omega}$ the corresponding precision matrix defined in equations (\ref{eq:Sigma}) and (\ref{eq:Omega}), respectively. The following theorem shows that both $\widehat{\Sigma}$ and $\widehat{\Omega}$ attain the minimax optimal rate of convergence over the class of Toeplitz matrices $\Sigma(f)$ such that $f\in \mathcal{F}_{\beta}$, $\beta>0$.
\begin{theorem} \label{theorem1} 
	Let $ Y_1,...., Y_n\overset{\text{i.i.d.}}{\sim} \mathcal{N}_p(0_p, \Sigma),\, n\geq 1,$ with $\Sigma=\Sigma(f)$ such that $f \in \mathcal{F}_{\beta}$ and $\beta=\gamma+\alpha>0$. If $h>0$ such that  $h\to 0$ and $hT\to \infty$, then with $T=\lfloor p^{\upsilon}\rfloor$ for any $\upsilon\in(1-\min\{1,\beta\}/3,1)$ and $q=\max\{1,\gamma\}$, the spectral density estimator $\hat{f}$, the corresponding covariance matrix estimator $\widehat{\Sigma}$ and the precision matrix estimator $\widehat{\Omega}$ satisfy for $p\to \infty$ and $n$ such that $p^{\min\{1,\beta\}}/n\to c\in(0,\infty]$  	
	\begin{eqnarray*}
		\sup _{f\in \mathcal{F}_{\beta}}\,	E_f \|\widehat{\Sigma}- \Sigma(f)\|^2 \leq \sup _{f\in \mathcal{F}_{\beta}}\, E_f \|\hat{f} - f\|_\infty^2 &=&\mathcal{O} \left\{ \frac{\log(np)}{nph}\right\} + \mathcal{O}(h^{2\beta})\nonumber\\
		\sup _{f\in\mathcal{F}_{\beta}}\,	E_f \|\widehat{\Omega} - \Sigma^{-1}(f)\|^2 &=&\mathcal{O} \left\{ \frac{\log(np)}{nph}\right\}  + \mathcal{O}(h^{2\beta}).
	\end{eqnarray*}
	For $h\asymp\left\{\log(np)/(np)\right \}^{\frac{1}{2\beta+1}}$ it follows that 
	\begin{eqnarray*}
		\sup _{f\in\mathcal{F}_{\beta}}\,	E_f \|\widehat{\Sigma}- \Sigma(f)\|^2  \leq  \sup _{f\in\mathcal{F}_{\beta}}\,E_f \|\hat{f} - f\|_\infty^2 &=& \mathcal{O}\left [ \left \{\frac{\log(np)}{np}\right \} ^{\frac{2\beta}{2\beta+1}}\right ]\nonumber \\
		\sup _{f\in \mathcal{F}_{\beta}}\,	E_f \|\widehat{\Omega} - \Sigma^{-1}(f)\|^2  &=&\mathcal{O}\left [ \left \{\frac{\log(np)}{np}\right \} ^{\frac{2\beta}{2\beta+1}}\right ].
	\end{eqnarray*}
\end{theorem}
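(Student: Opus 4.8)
\emph{Proof proposal.} The plan is to reduce both matrix risks to a single $L_\infty$ bound on the spectral density estimator, and then to analyse that estimator as the output of an ordinary nonparametric regression created by the transformation of Section~\ref{sec:method}. First, observe that $\widehat{\Sigma}-\Sigma(f)$ is exactly the Toeplitz matrix generated by $\hat f-f$, so the bound $\|\Sigma(g)\|\le\|g\|_\infty$ recalled in Section~\ref{sec:likelihood} gives $\|\widehat{\Sigma}-\Sigma(f)\|\le\|\hat f-f\|_\infty$ deterministically. Likewise $\widehat{\Omega}-\Sigma^{-1}(f)$ is generated by $1/\hat f-1/f=(f-\hat f)/(f\hat f)$, so on the event $\mathcal A=\{\|\hat f-f\|_\infty\le\delta/2\}$ one has $\|\widehat{\Omega}-\Sigma^{-1}(f)\|\le 2\delta^{-2}\|\hat f-f\|_\infty$, while on $\mathcal A^{c}$ I would use that $\hat f=H^{-1}(\widehat{H(f)})>0$ together with crude polynomial-in-$p$ bounds on $\|\widehat{\Omega}\|$ and a tail estimate showing $P(\mathcal A^{c})$ is super-polynomially small, so that $\mathcal A^{c}$ contributes negligibly. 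Hence it suffices to bound $\sup_{f\in\mathcal F_\beta}E_f\|\hat f-f\|_\infty^{2}$.

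Next, I would pass to the log scale: since $\hat f=H^{-1}(\widehat{H(f)})$, $f=H^{-1}(H(f))$, and $(H^{-1})'(H(f(x)))=\sqrt2\,f(x)\in[\sqrt2\,\delta,\sqrt2\,M_0]$, a mean value argument shows that on $\{\|\widehat{H(f)}-H(f)\|_\infty\le c_0\}$ one has $\|\hat f-f\|_\infty\le C\|\widehat{H(f)}-H(f)\|_\infty$, the exceptional event being handled again by small probability plus crude bounds. Because $H$ is smooth and $f$ is bounded below, $H(f)=\mathrm{const}+2^{-1/2}\log f$ lies in the same Hölder class as $f$ up to constants, so the task reduces to bounding the $L_\infty$ risk of the periodic smoothing spline~(\ref{eq:Spline}), with penalty order $q=\max\{1,\gamma\}\ge\beta/2$ and equispaced knots, for the $\beta$-smooth regression function $H\{f(\cdot)\}$.

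For this I would write the regression model as $Y_k^{*}=H\{f(x_k)\}+b_k+e_k$, with $b_k$ deterministic and $e_k$ mean zero. The bias $b_k$ collects three contributions: the within-bin averaging error $H(\bar f_k)-H\{f(x_k)\}=\mathcal O(T^{-\min\{1,\beta\}})$ uniformly, from the Hölder property of $f$ and the Lipschitzness of $H$ on $[\delta,M_0]$; the diagonalization error inherited from Lemma~\ref{lemma:DCTdiag}, since $E W_{i,j}=f(\pi x_j)+\mathcal O(p^{-1}+p^{-\beta}\log p)$, which survives binning and the variance-stabilizing map as $\mathcal O(p^{-\min\{1,\beta\}}\log p)$; and the variance-stabilization remainder of \citet{cai2010nonparametricfest}, which is $\mathcal O(m^{-1})=\mathcal O(T/(np))$ once $m=np/T\to\infty$, the exact digamma offset $\phi(m/2)$ being built into $H$. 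The noise $e_k$ is approximately i.i.d.\ with variance $\asymp m^{-1}$ and sub-Gaussian tails from the coupling of \citet{cai2010nonparametricfest}, with only weak residual dependence from the off-diagonal entries of $D^{T}\Sigma D$, again controlled by Lemma~\ref{lemma:DCTdiag}. Writing $S_h$ for the linear periodic-smoothing-spline operator with $N=2T-2$ knots,
\[
\widehat{H(f)}-H(f)=\bigl(S_hH(f)-H(f)\bigr)+S_hb+S_he .
\]
The approximation theory of periodic splines gives $\|S_hH(f)-H(f)\|_\infty=\mathcal O(h^{\min\{\beta,2q\}})=\mathcal O(h^{\beta})$ uniformly over $\mathcal F_\beta$ (using $hT\to\infty$); the smoothing spline operator is bounded on $L_\infty$ uniformly in $h$ (integrable equivalent-kernel envelope, again with $hT\to\infty$), so $\|S_hb\|_\infty\le C\|b\|_\infty=\mathcal O(T^{-\min\{1,\beta\}}+p^{-\min\{1,\beta\}}\log p+T/(np))$; and a maximal inequality for linear smoothers of sub-Gaussian noise gives $E\|S_he\|_\infty^{2}=\mathcal O\{m^{-1}\log N/(Nh)\}=\mathcal O\{\log(np)/(nph)\}$, since $m^{-1}/N\asymp(np)^{-1}$ and $\log N\asymp\log T\le\log(np)$. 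Collecting the pieces,
\begin{align*}
\sup_{f\in\mathcal F_\beta}E_f\|\widehat{H(f)}-H(f)\|_\infty^{2}
&=\mathcal O\Bigl\{\tfrac{\log(np)}{nph}\Bigr\}+\mathcal O(h^{2\beta})\\
&\quad+\mathcal O\bigl(T^{-2\min\{1,\beta\}}\bigr)+\mathcal O\bigl(p^{-2\min\{1,\beta\}}\log^{2}p\bigr)+\mathcal O\bigl(T^{2}/(np)^{2}\bigr).
\end{align*}
With $T=\lfloor p^{\upsilon}\rfloor$, $\upsilon\in(1-\min\{1,\beta\}/3,\,1)$, under $p^{\min\{1,\beta\}}/n\to c\in(0,\infty]$ and $h\to0$, $hT\to\infty$, the last three terms are dominated by $\max[\log(np)/(nph),\,h^{2\beta}]$, which together with the two reductions above yields the first display of the theorem; substituting $h\asymp\{\log(np)/(np)\}^{1/(2\beta+1)}$ balances the remaining two terms and gives the second. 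The precision-matrix statements follow identically from $\|1/\hat f-1/f\|_\infty\le2\delta^{-2}\|\hat f-f\|_\infty$ on $\mathcal A$.

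The main obstacle is not the spline analysis above, which is classical once the Gaussian regression model is in hand, but the quantitative justification of that model uniformly over $\mathcal F_\beta$. Concretely, I expect the hard step to be showing that the jointly Gaussian vector $(D_j^{T}Y_i)_j$, whose covariance $D^{T}\Sigma D$ is only approximately diagonal by Lemma~\ref{lemma:DCTdiag}, produces after summing $m$ coordinates into a bin a variable $Q_k$ that behaves like a $\Gamma(m/2,\cdot)$ up to errors that neither inflate its variance beyond the $m^{-1}$ scale nor create more than negligible inter-bin dependence, and then transferring the variance-stabilizing coupling of \citet{cai2010nonparametricfest}, stated for exactly i.i.d.\ gamma observations, to this weakly dependent and not-exactly-gamma setting with a sup-norm-over-$k$ error bound. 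It is precisely this step that prevents $T$ from being too large, i.e.\ $m=np/T$ from being too small, which is the role of the lower restriction on $\upsilon$; the remaining ingredients — the reduction via $\|\Sigma(g)\|\le\|g\|_\infty$, the periodic-spline bias and variance rates, and the final bias–variance trade-off — are routine.
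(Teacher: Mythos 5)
Your high-level architecture matches the paper's: reduce the matrix risks to $\sup_f E\|\hat f-f\|_\infty^2$ via $\|\Sigma(g)\|\le\|g\|_\infty$, pass to $\widehat{H(f)}$ by a mean-value argument exploiting $\delta\le f\le M_0$, and then analyse the periodic smoothing spline through its equivalent kernel with a bias-variance split. The Hölder reduction, the $\mathcal O(h^\beta)$ bias from the order-$2q$ kernel, and the need to bound $T$ from above (so that $m=np/T\to\infty$) are all correctly identified.

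There is, however, a genuine gap in your treatment of the stochastic error. You assert that ``the noise $e_k$ is approximately i.i.d.\ with variance $\asymp m^{-1}$ and sub-Gaussian tails from the coupling of Cai and Zhou,'' and then invoke a sub-Gaussian maximal inequality to get $E\|S_he\|^2_\infty=\mathcal O\{m^{-1}\log N/(Nh)\}$. That is not what the coupling gives you. The quantile-coupling representation in the paper's Lemma~4 (building on Cai--Zhou and Brown et al.) yields $Y_k^*=H\{f(x_k)\}+\epsilon_k+m^{-1/2}Z_k+\xi_k$ with $Z_k$ Gaussian but $\xi_k$ only satisfying moment bounds of the form $E|\xi_k|^\ell\lesssim(\log m)^{2\ell}\{m^{-\ell}+(T^{-1}+T^{-1}p^{1-\beta}\log p)^\ell\}$. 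These are polynomial (in $\ell$) moment bounds, not sub-Gaussian tails, and a sub-Gaussian maximal inequality cannot be applied to $\sum_k K_h(\cdot,x_k)\xi_k$ directly. The paper must split $\xi_k$ at a threshold $R$, use an Azuma/Hoeffding-type bound for the truncated piece $\xi_k^-$ (which does give sub-Gaussianity because it is bounded) and a crude moment bound for the tail piece $\xi_k^+$, then optimize $R$ and the moment order $\ell$; this is exactly where the lower restriction on $\upsilon$ (equivalently the requirement $m=np/T$ not too small) is consumed. Without that two-sided truncation argument your variance bound is unjustified. A secondary inaccuracy: you fold the ``variance-stabilization remainder'' into the deterministic bias $b_k$ as $\mathcal O(m^{-1})$, but in the Cai--Zhou representation that remainder is the mean-zero random variable $\xi_k$, not a bias; the digamma offset $\phi(m/2)$ already removes the deterministic part of $E\log(Q_k/m)$, so the only genuine biases are the within-bin discretization and the diagonalization error from Lemma~1, both of which are negligible relative to $h^\beta$.

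Two smaller points. For the transfer from $\widehat{H(f)}$ to $\hat f$ and for $\widehat\Omega$ you propose truncating on a good event $\mathcal A$ plus ``crude polynomial-in-$p$ bounds'' and a ``super-polynomially small'' tail estimate; the paper instead applies Cauchy--Schwarz and explicitly controls $E\|\hat f-f\|_\infty^4$ and $\mathrm{pr}(\|H^{-1}(g)\|_\infty>C)$ via a moment-generating-function bound on $\|\widehat{H(f)}\|_\infty$. Your route is plausible but the tail bound you need is exactly the MGF estimate, so this step is not quite as free as your sketch suggests. Finally, the $L_\infty$ boundedness of the spline operator uniformly in $h$ rests on the exponential decay of the periodic equivalent kernel (Lemma~2 in the appendix), which should be cited explicitly rather than treated as folklore.
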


Theorem~\ref{theorem1} is established under the asymptotic scenario with $p\rightarrow\infty$ and $n$ such that $p^{\min\{1,\beta\}}/n\to c\in(0,\infty]$, i.e., the dimension $p$ grows, while the sample size $n$ either remains fixed or also grows but not faster than $p^{\min\{1,\beta\}}$. Note that this asymptotic scenario covers the setting when the sample covariance matrix is inconsistent. In particular, for $\beta\geq 1$ the sample size $n$ does not grow faster than $p$ and adding more samples improves the convergence rate. If $\beta\in(0,1)$, then increasing $n$ with the rate faster than $p^\beta$ will not lead to a faster convergence rate, due to the diagonalization error from Lemma~\ref{lemma:DCTdiag}, which can be improved only by making additional assumptions on the spectral density. 

The minimax optimal convergence rates for estimating $\Sigma$ and $\Sigma^{-1}$ from $n$ i.i.d. Gaussian vectors $Y_1,...,Y_n$ with zero mean and a Toeplitz covariance matrix $\Sigma(f)$ with $f \in  \mathcal{F}_{\beta}$ have been established by \cite{cai2013optimal}. Since the lower bound rates given in Theorems 5 and 7 in \cite{cai2013optimal} match the upper bound rates obtained in our Theorem~\ref{theorem1}, we conclude that our estimator is minimax optimal. For non-Gaussian data the minimax optimal convergence rates for $\Sigma$ and $\Sigma^{-1}$ are not known. Note that ${\cal{F}}_\beta$ with $\beta>0$ includes bounded spectral densities of long-memory processes.

The proof of Theorem~\ref{theorem1} can be found in Appendix \ref{app:theorem1} and is the main result of our work. The most important part of this proof is the derivation of the convergence rate for the spectral density estimator $\hat{f}$ under the $L_\infty$ norm. In the original work, \citet{cai2010nonparametricfest} established an $L_2$ rate for a wavelet nonparametric mean estimator in a gamma regression where the data are assumed to be independent. In our work, the spectral density estimator $\hat{f}$ is based on the gamma distributed data $W_{i,1},\ldots,W_{i,p}$, which are only asymptotically independent. Moreover, the mean of these data is not exactly $f(\pi x_1),\ldots,f(\pi x_p)$, but is corrupted by the diagonalization error given in Lemma~\ref{lemma:DCTdiag}. This error adds to the error that arises via binning and VST and that describes the deviation from a Gaussian distribution, as derived in \citet{cai2010nonparametricfest}. Finally, we need to obtain an $L_\infty$ rather than an $L_2$ rate for our spectral density estimator. Overall, the proof requires different and partly novel tools than those used in \citet{cai2010nonparametricfest}. A particular challenge is the treatment of the dependence of $W_{i,1},\ldots,W_{i,p}$.  

To get the $L_\infty$ rate for $\hat{f}$, we first derive that for the periodic smoothing spline estimator $\widehat{H(f)}$ of the log-spectral density. To do so, we use a closed-form expression of its effective kernel obtained in \citet{schwarz2016unified}, thereby carefully treating various (dependent) errors that describe  deviations from a Gaussian nonparametric regression with independent errors and mean $f(\pi x_i)$. Note also that although the periodic smoothing spline estimator is obtained on $T$ binned points, the rate is given in terms of the vector dimension $p$ and the sample size $n$. Next, using the Cauchy-Schwarz inequality and a mean value argument, this rate is translated into the $L_\infty$ rate for the spectral density estimator $\hat{f}$. To obtain the rate for the Toeplitz covariance matrix estimator is enough to note that $E\|\widehat{\Sigma}-\Sigma\|^2\leq E\|\hat{f}-f\|^2_\infty$. 

\section{Practical Issues}
\label{sec:selection} 

Several choices must be made in practice to obtain our estimator. First, the data $Y_i\, (i=1,\ldots, n)$ are transformed into $W_{i,j}=(D_j^TY_i)^2 \, (j=1,\ldots,p)$ and are subsequently binned. 
According to Theorem \ref{theorem1}, the number of bins $T=\lfloor p^\upsilon\rfloor$ with any $\upsilon\in(1-\min\{\beta,1\}/3,1)$ leads to a minimax optimal estimator. That is, for $\beta\geq 1$ one can take any $\upsilon\in(2/3,1)$, while for $\beta<1$ the interval depends on $\beta$, for example, for $\beta=1/2$ the interval is $\upsilon\in(5/6,1)$. In our simulation studies we observed that the results are quite robust for various values for $\upsilon$. If no knowledge about $\beta$ is available, one can proceed as follows. Any standard test for long-range dependence can be performed and if the null hypothesis of long-range dependence is rejected, i.e., $\beta>1/2$, then any $\upsilon\in(5/6,1)$ can be taken. Otherwise, a smaller interval for $\upsilon$ should be considered. Additionally, one can always verify whether the chosen value of $T$ is appropriate by generating QQ-plots of $Y^*_k$, which ideally should show little departure from the normality. Several figures in the Supplementary Material show how the QQ-plots change for Gaussian data depending on $T$.

Once the data are transformed, the mean of $Y_k^*\,(k=1,\ldots, T)$, i.e., the log-spectral density, is estimated with a periodic smoothing spline. 
For this, one needs to choose basis functions of the periodic spline space, the penalty order $q\in\mathbb{N}$ and the smoothing parameter $h>0$.

The basis of a periodic spline space with knots put at the observations is a Fourier basis $\{2^{1/2}\cos(2\pi x),2^{1/2}\sin(2\pi x)\}$, evaluated at  $x_k=(k-1)/(2T-2)$ for $k=1,\ldots,2T-2$. 

The smoothing parameter $h$ can be chosen using any data-driven approach, such as (generalized) cross-validation or an empirical Bayes approach, see \citet{Wahba1985}. 

According to Theorem~\ref{theorem1}, the choice of $q$ should be related to the true smoothness $\beta$  of the spectral density in order to obtain the minimax optimal estimator. Assume that $q$ taken for estimation is larger than the true smoothness $\beta$. Then, the rate of convergence is determined by the true $\beta$ and is minimax, independent on how large $q$ is. In particular, if $\gamma=0$, then $\beta=\alpha\in(0,1)$ and taking $q=\max\{1,\gamma\}=1$ would lead to a minimax optimal estimator. If $q$ taken for the estimation is less than $\beta$, then the rate will depend on $q$ and on the choice of the smoothing parameter $h$. Assume that $\beta>q$ and the smoothing parameter $h$ is estimated with (generalized) cross-validation. Then, it has been shown in \cite{Wahba1985} that a periodic spline estimator adapts to the unknown smoothness up to $2q$. That is, if $\beta< 2q$, then the rate will be minimax optimal, while for  $\beta\geq 2q$ the rate will be determined by $q$. If $\beta>q$ and the smoothing parameter $h$ is estimated by the empirical Bayesian approach, then \cite{Wahba1985} has shown that the resulting estimator does not adapt to the extra smoothness. However, in small samples the empirical Bayes smoothing spline can perform similar or even better than the cross-validated smoothing spline due to smaller constants in the risk, see \cite{Kri2013}.

Since in practice $\beta $ is not known exactly, it is also not known which rate the estimator will have with a chosen $q$. However,  looking at the decay of the sample covariance functions one can get an idea whether $\beta$ is rather small, in case of a very slow decay, or rather large, in case of a very fast decay, and decide on the choice of $q$. A more attractive approach is to resort to adaptive estimation methods, that lead to the best possible estimators without prior knowledge on $\beta$. For example, the empirical Bayesian framework allows to estimate the unknown $\gamma$  under certain assumptions on the function space \citep{serra2017adaptive}. Other approaches for adaptive non-parametric estimation are aggregation and Lepski's method, see e.g., \cite{Chagny2016} for an overview and references. A detailed study of adaptive spectral density estimators is out of scope of our work.

In our simulation study and the real data example, presented in the next two sections, we discuss the parameter choices explicitly.
\section{Simulation Study} 
\label{sec:simulation}
In this section, we compare the performance of our proposed Toeplitz covariance estimator with the tapering estimator of \citet{cai2013optimal} and with the sample covariance matrix. A Monte Carlo simulation with $100$ samples is performed using R (version $4.1.2$, seed $42$). We consider Gaussian vectors $ Y_1,..., Y_n$ ${\overset{\text{i.i.d.}}{\sim}}\mathcal{N}_p(0_p, \Sigma)$  with (A) $p=5\,000,\, n=1$, (B) $p=1\,000,\, n=50$ and  (C) $p=5\,000,\, n=10$,  and  with the covariance functions $\sigma:\, \mathbb{Z} \to \mathbb{R},\, k\mapsto \sigma_k$ 
\begin{enumerate}
	\item[(1)] of a polynomial decay, i.e., $\sigma_k= 1.44(1+|k|)^{-5.1}$,
	\item[(2)] of an autoregressive process $y_t=\epsilon_{t}+0.1y_{t-1}-0.1y_{t-2}$ where $\epsilon_t$ is i.i.d. Gaussian noise and $\text{var}(\epsilon_t)=1.44$,
	\item[(3)] such that the corresponding spectral density is Lipschitz continuous but not differentiable: $f(x)= 1.44\{|\sin(x+0.5\pi)|^{1.7}+0.45\}$.
\end{enumerate}
In particular,  $\sigma_0=1.44$ in all three examples. Figure \ref{fig_Simulation_A} shows the spectral densities and the corresponding covariance functions for the three examples. 
\begin{figure}[h]
	\centering
	\begin{subfigure}[H]{0.32\textwidth}
		\caption*{ Process (1) }
		\includegraphics[width=\linewidth]{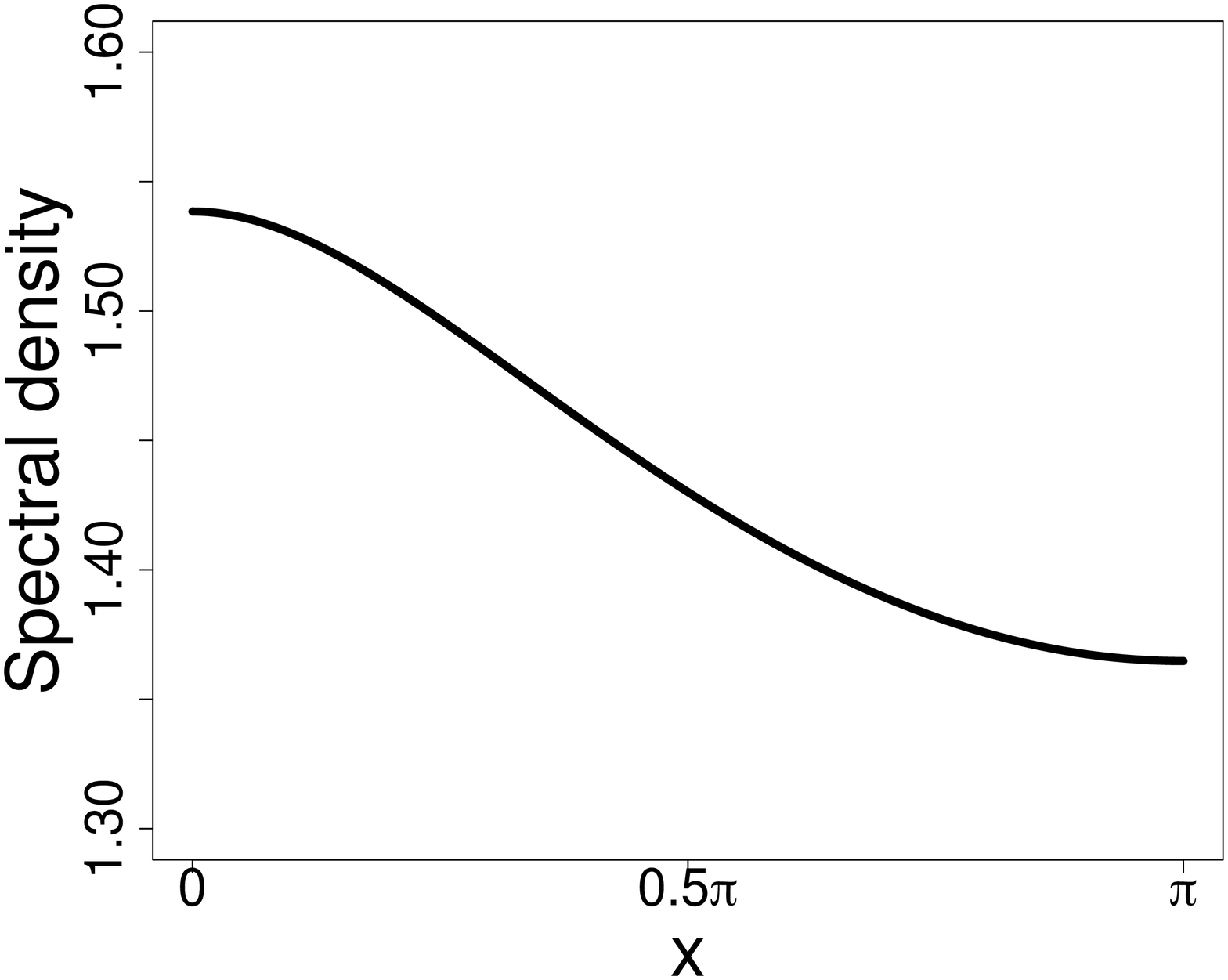} 
	\end{subfigure} \hfill
	\begin{subfigure}[h]{0.32\textwidth}
		\caption*{Process (2)}
		\includegraphics[width=\linewidth]{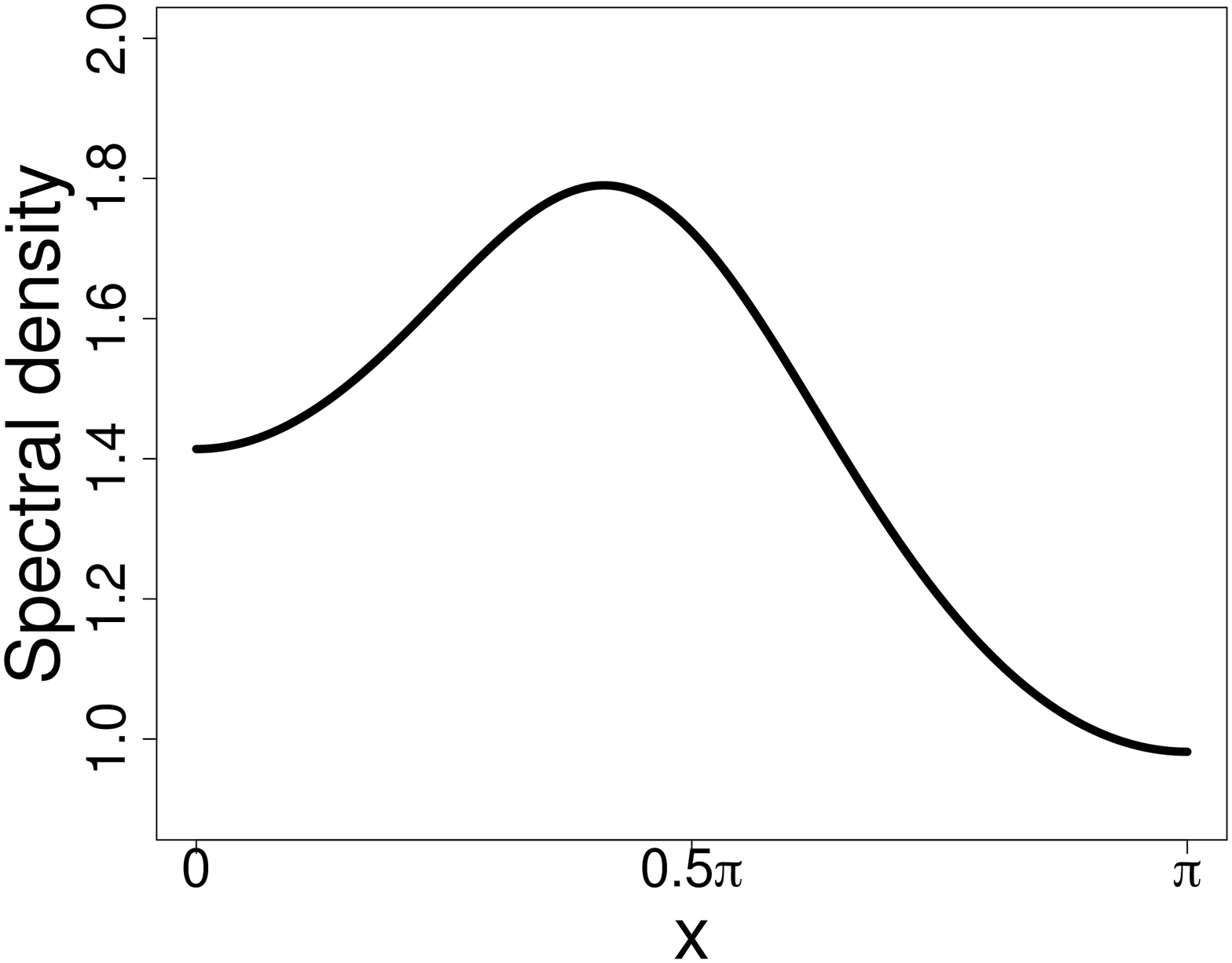} 
	\end{subfigure}\hfill
	\begin{subfigure}[h]{0.32\textwidth}
		\caption*{ Process (3) }
		\includegraphics[width=\linewidth]{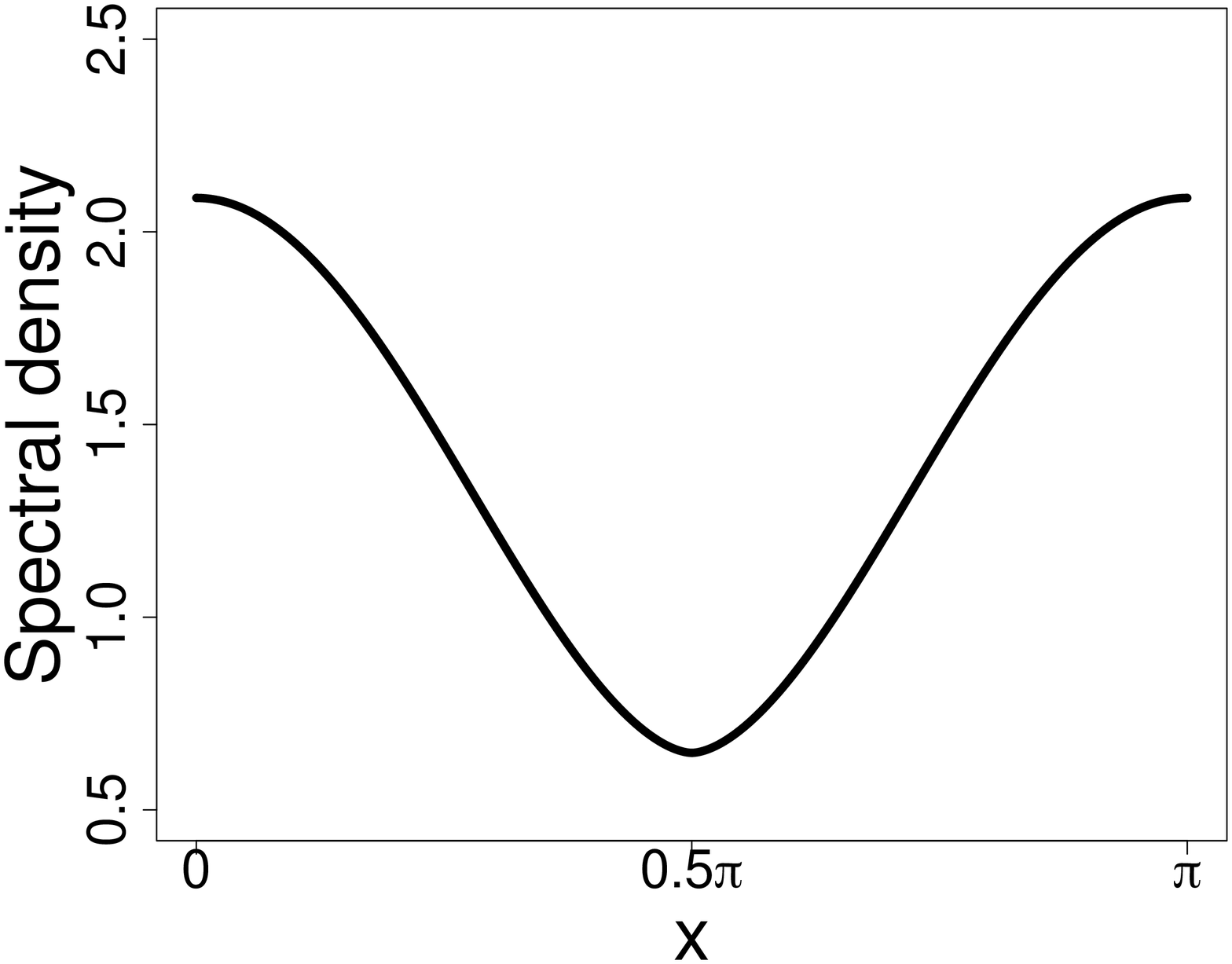} 
	\end{subfigure}\hfill
	\begin{subfigure}[h]{0.32\textwidth}
		\includegraphics[width=\linewidth]{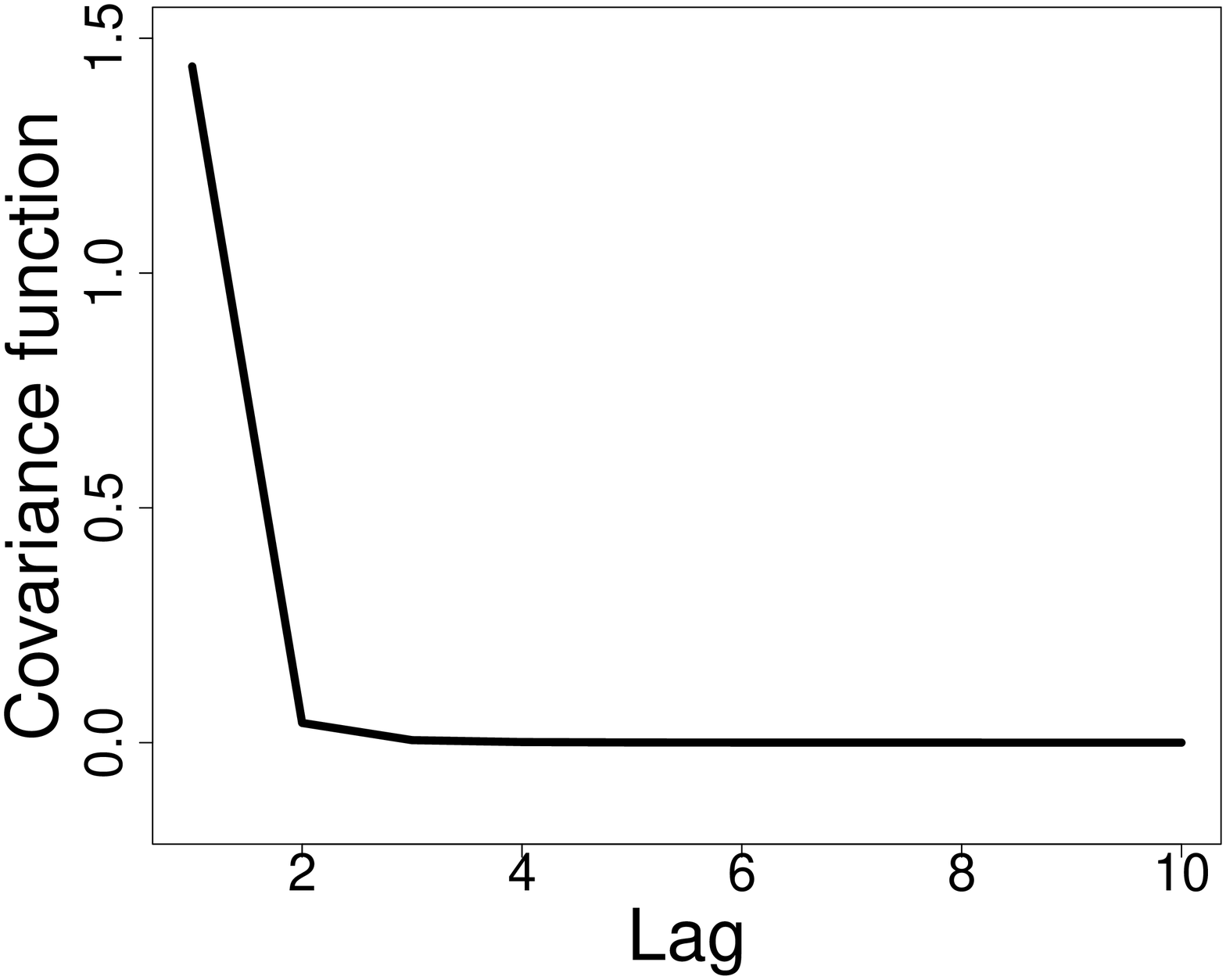} 
	\end{subfigure}\hfill
	\begin{subfigure}[h]{0.32\textwidth}		
		\includegraphics[width=\linewidth]{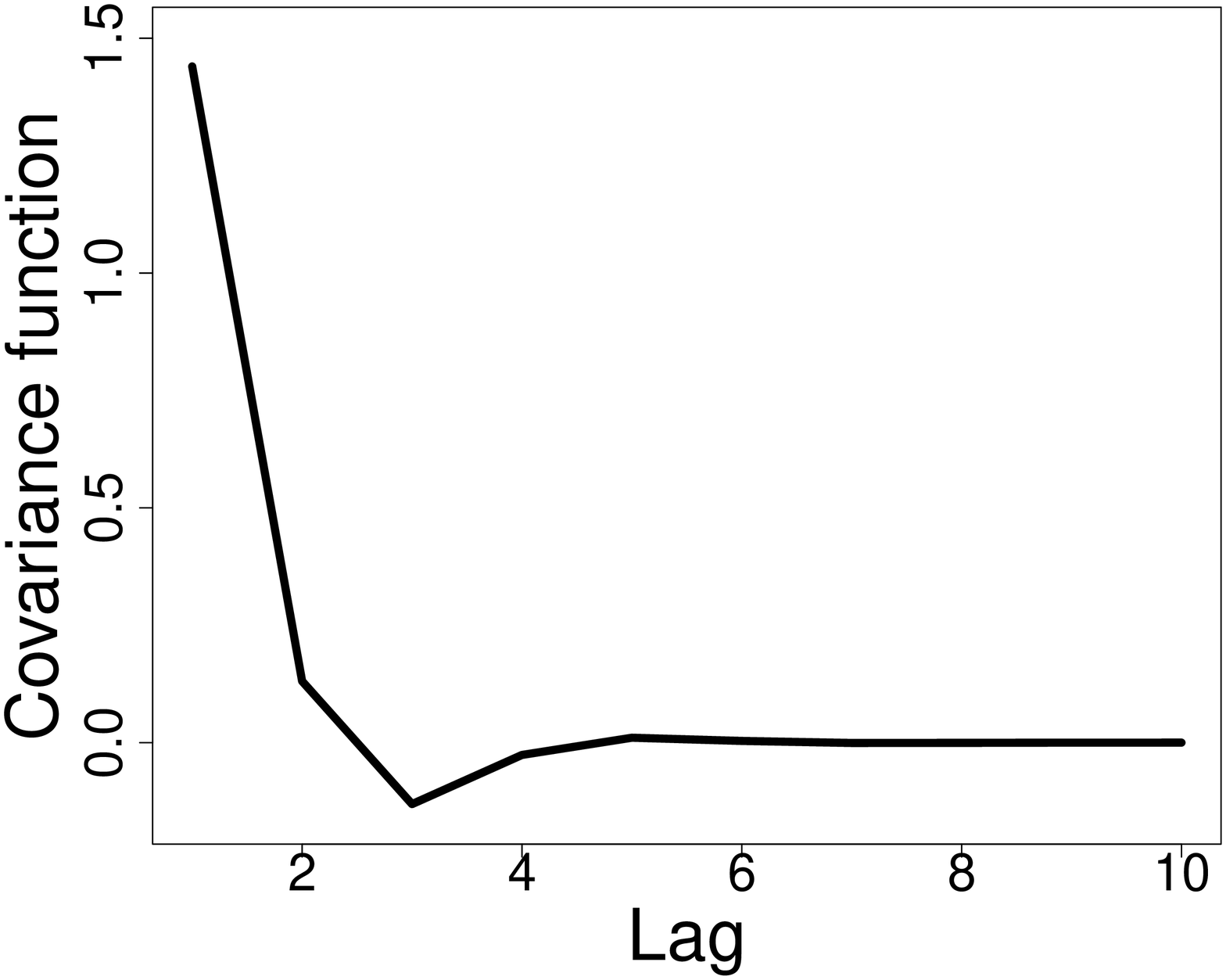} 
	\end{subfigure}\hfill
	\begin{subfigure}[h]{0.32\textwidth}		
		\includegraphics[width=\linewidth]{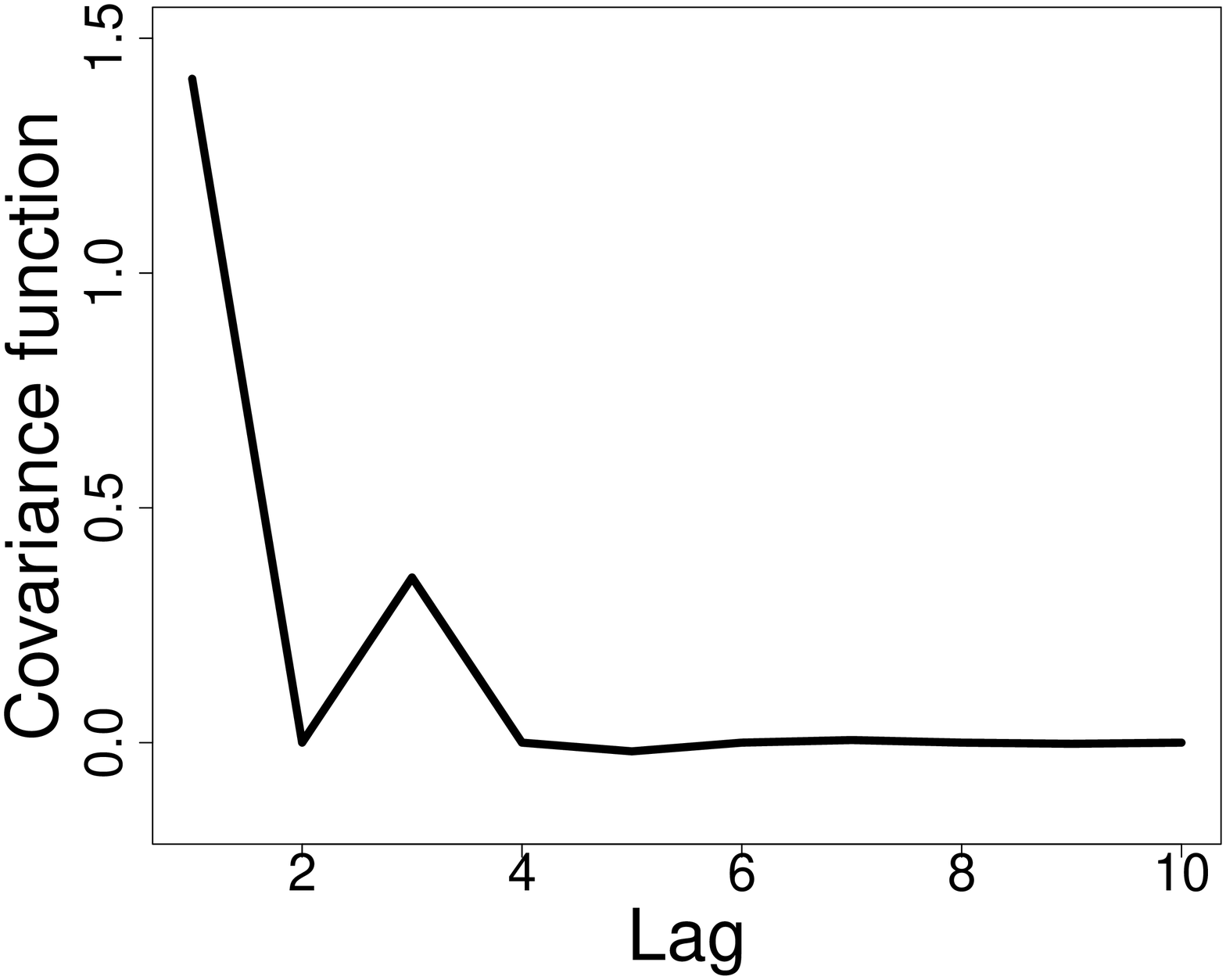} 
	\end{subfigure}\hfill
	\caption{ Spectral density functions (first row)  and covariance functions (second row) for examples (1)--(3). }
	\label{fig_Simulation_A}
\end{figure} 
To set the parameters note that the spectral density of the process (1) belongs to ${\cal{F}}_{4}$, the spectral density of the process (2) is an analytic function and the spectral density of the process (3) is from ${\cal{F}}_{1}$.

Since for all three processes $\beta\geq 1$, according to Theorem \ref{theorem1}, any value of $\upsilon\in(2/3,1)$ should lead to optimal estimation. We set the number of bins to $T=500$ bins in all scenarios, which corresponds to $\upsilon\approx 0.73$ for scenario (A) and (C) and $\upsilon\approx 0.90 $ for scenario (B). 

Setting $q=2$ would lead to the minimax optimal rates for processes (1) and (3), if the smoothing parameter is chosen by (generalized) cross-validation, see discussion in Section \ref{sec:selection}. According to arguments from the previous section, the convergence rate for the process (2) is the same as that for the process (1), while process (3) has a slower convergence rate. Hence, for a given scenario, we should observe similar magnitude of the average norms for processes (1) and (2), while somewhat larger values for process (3). Across scenarios, we expect scenario (A) to have larger values of the average norms, as only $p=5\,000$ observations are used, compared to $np=50\,000$ data points in scenarios (B) and (C).

To select the regularization parameter for our estimator, we implemented the restricted maximum likelihood method, generalized cross-validation and the corresponding oracle versions,  i.e., as  if $\Sigma$ were known. 

The sample covariance matrix $\widetilde{\Sigma}=(\tilde{\sigma}_{|i-j|})_{i,j=1}^p$ is defined as $n^{-1}\sum_{i=1}^{n}  Y_i Y_i^T$ with averaged diagonals to obtain Toeplitz structure. The tapering estimator with tapering parameter $k\leq p/2$ is defined as $\text{Tap}_k(\widetilde{\Sigma})=(\tilde{\sigma}_{|i-j|} w_{|i-j|})_{i,j=1}^p$, where ${w}_{m}=1$ when $m=0,...,k/2$,  ${w}_{m}=2-2m/k$ when $k/2<m\leq k$ and otherwise $w_m=0$, see \citet{cai2013optimal}.  The parameter $k$ can be selected using cross-validation \citep[see][]{bickel2008regularized} only if $n>1$, that is, under scenarios (B) and (C). For this, the $n$ observations are divided by $30$ random splits into a training set of size $n_1=2n/3$ and a test set of size $n_2=n/3$. Let $\Sigma_1^{\nu}$ and $\Sigma_2^{\nu}$ be the sample covariance matrices from the $\nu$th split. 
The tapering parameter $k$ is then estimated
as
$$ \hat{k}_{\text{cv}}=\arg \min_{k=2,3,...,p/2} \frac{1}{30}\sum_{\nu=1}^{30} \| \text{Tap}_k(\Sigma_1^ {\nu})-\Sigma_2^ {\nu}\|,$$
where $\text{Tap}_k(\Sigma_2^{\nu})$ is the tapering estimator with parameter $k$.  
If $n=1$, that is, under scenario (A), \citet{wu2009banding} suggest to split the time series $Y$ into $l$ non-overlapping subseries of length $p/l$ and then proceed as before to select the tuning parameter $k$. 
To the best of our knowledge, there is no data-driven method for the selection of $l$. Using the true covariance matrix $\Sigma$, we preselected oracle values $l=30$ subseries for the example $(1)$ and $l=15$ subseries for the examples $(2)$ and $(3)$. The parameter $k$ can then be chosen with cross-validation as above. We employ this approach under scenario (A) instead of an unavailable fully data-driven criterion and name it semi-oracle. Finally, for all three scenarios (A), (B) and (C), the oracle tapering parameter is computed using grid search for each Monte Carlo sample as $\hat{k}_{\text{or}}=\argmin_{k=2,3,...,p/2} \| \text{Tap}_k(\widetilde{\Sigma})-\Sigma\|, $ where $\widetilde{\Sigma}$ is the sample covariance matrix.  To speed up the computation, one can replace the spectral norm by the $\ell_1$ norm, as suggested by \citet{bickel2008regularized}.

\begin{table}[h]
	\centering
	\scriptsize
	\def~{\hphantom{0}}
	\caption{ (A) $p=5000,\, n=1$: Errors of the Toeplitz covariance matrix and the spectral density estimators with respect to the spectral and the $L_2$ norm;  average computation time of the covariance estimators in seconds for one Monte Carlo sample is in the last column; all numbers multiplied with 100 except the last column }{
		\begin{tabular}{lccccccc}
			&\multicolumn{2}{c}{ Process (1)} &\multicolumn{2}{c}{ Process (2) } &\multicolumn{2}{c}{ Process (3)} &time  \\
			& $\|\widehat{\Sigma}-\Sigma\|^2$ & $\|\hat{f}-f\|^2_2$ &  $\|\widehat{\Sigma}-\Sigma\|^2$ & $\|\hat{f}-f\|^2_2$ &  $\|\widehat{\Sigma}-\Sigma\|^2$ & $\|\hat{f}-f\|^2_2$ & in sec \\[5pt]		
			our method (GCV) & 0.688 & 0.255 & 1.591 & 0.439 & 3.401 & 0.606 & 4.235 \\ 
			our method (ML) & 0.591 & 0.224 & 1.559 & 0.417 & 3.747 & 0.628 & 4.224 \\ 
			tapering (semi-oracle) & 0.558 & 0.216 & 2.325 & 0.674 & 3.551 & 0.979 & 4.617 \\ 
			sample covariance & 16240.895 & 3810.680 & 20291.809 & 3694.392 & 24438.036 & 3809.486 & 0.342 \\ 
			our method (GCV-oracle) & 0.421 & 0.175 & 1.373 & 0.378 & 3.321 & 0.575 & \\ 
			our method (ML-oracle) & 0.464 & 0.186 & 1.487 & 0.391 & 3.781 & 0.624 & \\ 
			tapering (oracle) & 0.418 & 0.171 & 1.045 & 0.288 & 1.547 & 0.371 &  \\ 
	\end{tabular}}
	\label{tab_Simulation_A}
\end{table}
\begin{table}[h]
	\centering
	\scriptsize
	\def~{\hphantom{0}}
	\caption{ (B) $p=1000,\, n=50$: Errors of the Toeplitz covariance matrix and the spectral density estimators with respect to the spectral and the $L_2$ norm;  average computation time of the covariance estimators in seconds for one Monte Carlo sample is in the last column; all numbers multiplied with 100 except the last column }{
		\begin{tabular}{lccccccc}
			&\multicolumn{2}{c}{ Process (1)} &\multicolumn{2}{c}{ Process (2) } &\multicolumn{2}{c}{ Process (3)} &time  \\
			& $\|\widehat{\Sigma}-\Sigma\|^2$ & $\|\hat{f}-f\|^2_2$ &  $\|\widehat{\Sigma}-\Sigma\|^2$ & $\|\hat{f}-f\|^2_2$ &  $\|\widehat{\Sigma}-\Sigma\|^2$ & $\|\hat{f}-f\|^2_2$ & in sec \\[5pt]	
			our method (GCV) & 0.100 & 0.028 & 0.205 & 0.050 & 0.531 & 0.082 & 27.194 \\ 
			our method (ML) & 0.076 & 0.024 & 0.230 & 0.051 & 0.611 & 0.089 & 27.098 \\ 
			tapering (CV) & 0.110 & 0.031 & 0.218 & 0.055 & 0.348 & 0.073 & 23.908 \\ 
			sample covariance & 79.603 & 56.262 & 95.090 & 61.000 & 127.528 & 61.001 & 0.141 \\ 
			our method (GCV-oracle) & 0.062 & 0.020 & 0.163 & 0.043 & 0.462 & 0.074 & \\ 
			our method (ML-oracle) & 0.067 & 0.021 & 0.221 & 0.050 & 0.603 & 0.088 & \\ 
			tapering (oracle) & 0.057 & 0.020 & 0.133 & 0.037 & 0.265 & 0.055 &\\ 
	\end{tabular}}
	\label{tab_Simulation_B}
\end{table}

\begin{table}[h]
	\centering
	\scriptsize
	\def~{\hphantom{0}}
	\caption{ (C) $p=5000,\, n=10$: Errors of the Toeplitz covariance matrix and the spectral density estimators with respect to the spectral and the $L_2$ norm;  average computation time of the covariance estimators in seconds for one Monte Carlo sample is in the last column; all numbers multiplied with 100 except the last column }{
		\begin{tabular}{lccccccc}
			&\multicolumn{2}{c}{ Process (1)} &\multicolumn{2}{c}{ Process (2) } &\multicolumn{2}{c}{ Process (3)} &time  \\
			& $\|\widehat{\Sigma}-\Sigma\|^2$ & $\|\hat{f}-f\|^2_2$ &  $\|\widehat{\Sigma}-\Sigma\|^2$ & $\|\hat{f}-f\|^2_2$ &  $\|\widehat{\Sigma}-\Sigma\|^2$ & $\|\hat{f}-f\|^2_2$ & in sec \\[5pt]
			our method (GCV) & 0.088 & 0.026 & 0.217 & 0.050 & 0.593 & 0.079 & 4.260 \\ 
			our method (ML) & 0.078 & 0.023 & 0.231 & 0.050 & 0.677 & 0.086 & 4.251 \\ 
			tapering (CV) & 0.143 & 0.034 & 0.217 & 0.051 & 0.422 & 0.070 & 635.345 \\ 
			sample covariance & 673.122 & 370.946 & 792.714 & 360.687 & 1014071.587 & 375.728 & 1.189 \\ 
			our method (GCV-oracle) & 0.062 & 0.020 & 0.172 & 0.043 & 0.500 & 0.071 &  \\ 
			our method (ML-oracle) & 0.069 & 0.021 & 0.224 & 0.048 & 0.663 & 0.085 &  \\ 
			tapering (oracle) & 0.055 & 0.018 & 0.147 & 0.039 & 0.257 & 0.051 &  \\ 
	\end{tabular}}
	\label{tab_Simulation_C}
\end{table}
In Tables \ref{tab_Simulation_A}, \ref{tab_Simulation_B} and  \ref{tab_Simulation_C}, the errors of the Toeplitz covariance estimators with respect to the spectral norm and the average computation time for one Monte Carlo sample for all three processes are reported for scenarios  (A),  (B) and (C), respectively. To illustrate the goodness-of-fit of the spectral density, the $L_2$ norm $\|\hat{f}-f\|_2$ is also computed. 

The overall behavior of our estimator is exactly as expected. Moreover, the tapering and our estimator perform similar in terms of the spectral norm risk. This is not surprising as both estimators are proved to be rate-optimal. The oracle estimators show similar behavior, but are slightly less variable compared to the data-driven estimators.  Clearly, both the tapering and our estimators are superior to the inconsistent sample covariance matrix. In terms of computational time, both methods are similarly fast for scenarios (A) and (B). For scenario (C), the tapering method is much slower due to the multiple high-dimensional matrix multiplications in the cross-validation method. It is anticipated that for larger $p$ the tapering estimator is much more computationally intensive compared to our method. 


\section{Application to Non-Gaussian Data} \label{sec:nonGauss}
While we consider the rigorous theoretical study of our estimator for non-Gaussian data to be out of scope of this work, we would like to discuss in this section the application of our method to non-Gaussian data in practice. 

To apply our method, one needs to ensure that the transformed data $Y_k^*$ are approximately Gaussian with mean $H\{f(x_k)\}$. Our estimator will be minimax optimal if the deviation from Gaussianity of $Y_k^*$ is sufficiently small. In general, one needs to ensure that
\begin{itemize}
	\item[(i)] the mean of $(D_j^TY_i)^2$ is the spectral density up to a negligible error  $f(\pi x_j)\{1+o(1)\}$ , 
	\item[(ii)] the central limit theorem is applicable to $Q_k$ after appropriate centering and scaling,
	\item[(iii)] the $\log$-transform is variance-stabilizing for $(D_j^TY_i)^2$.
\end{itemize}
The first point is always satisfied, as long as both moments of $D_j^TY_i$ exist. Indeed, suppose that $Y_i$  has a Toeplitz covariance matrix $\Sigma$ and the marginal distribution is non-Gaussian with mean zero. Then, $E\{(D_j^TY_i)^2\}=(D^T\Sigma D)_{jj}=f(\pi x_j)\{1+o(1)\}.$ The last two points can be checked explicitly, if the distribution of $Y_i$ is known. For example, for gamma distributed data the second point is clearly satisfied and the third point is proved in the Supplementary Material. 

Of course, in practice the distribution of $Y_i$ is rarely known. Since our method relies on the asymptotic normality of $Y^*_k$, one can simply check whether the data deviate from normality strongly. 
Note that  application of normality tests for $Y^*_k$ might be misleading in small samples, since $Y^*_k$ are only asymptotically Gaussian,  even when $Y_i$ are Gaussian. 
Therefore, we suggest to generate a QQ-plot of $Y^*_k$. If this QQ-plot shows little deviations from Gaussianity, then our method can safely be applied in practice. If deviations from normality are substantial, this might indicate that the log-transform of the data is not suitable. To find an appropriate variance-stabilizing transformation, one can employ a Box-Cox transform. Note that to estimate the Box-Cox transformation parameter, one must take into account correlation of the binned data, e.g., by using the method of \cite{guerrero1993time} developed for time series. 

In the Supplementary Material we provide a small simulation study, as well as several QQ-plots of $Y^*_k$, where the distribution of $\Sigma^{-1/2}Y_i$ was taken to be gamma and uniform. It can be observed that in all examples the QQ-plots show little departure from the Gaussian distribution and all simulation results look very similar independent on the distribution of $Y_i$. 

\section{Application to Protein Dynamics}
\label{sec:realdata}
We revisit the data analysis of protein dynamics performed in \citet{krivobokova2012partial} and \citet{singer2016partial}. We consider data generated by the molecular dynamics simulations for the yeast aquaporin, the gated water channel of the yeast {\it Pichi pastoris}. Molecular dynamics simulations are an established tool for studying biological systems at the atomic level on timescales of nano- to microseconds. The data are given as Euclidean coordinates of all $783$ atoms of the aquaporin observed in a $100$ nanosecond time frame, split into $20\,000$ equidistant observations. Additionally, the diameter of the channel $y_t$ at time $t$ is given, measured by the distance between two centers of mass of certain residues of the protein. The aim of the analysis is to identify the collective motions of the atoms responsible for the channel opening. In order to model the response variable $y_t$, which is a distance, based on the motions of the protein atoms, we chose to represent the protein structure by distances between atoms and certain fixed base points instead of Euclidean coordinates. That is, we calculated
$$
X_t=\{d(A_{t,1},B_1),\ldots,d(A_{t,783},B_1),d(A_{t,1},B_2),\ldots,d(A_{t,783},B_y)\}\in\mathbb{R}^{4\cdot 783},
$$
where $A_{t,i}\in\mathbb{R}^3$, $i=1, \ldots,783$ denotes the $i$th atom of the protein at time $t$, $B_j\in\mathbb{R}^3 \quad (j=1,...,4)$, is the $j$th base point and $d(\cdot,\cdot)$ is the Euclidean distance. Figure \ref{fig_Aqua} shows the diameter $y_t$ and the distance between the first atom and the first center of mass. \\
\begin{figure}[h]
	\centering
	\begin{subfigure}[h]{0.48\textwidth}
		\includegraphics[width=\linewidth]{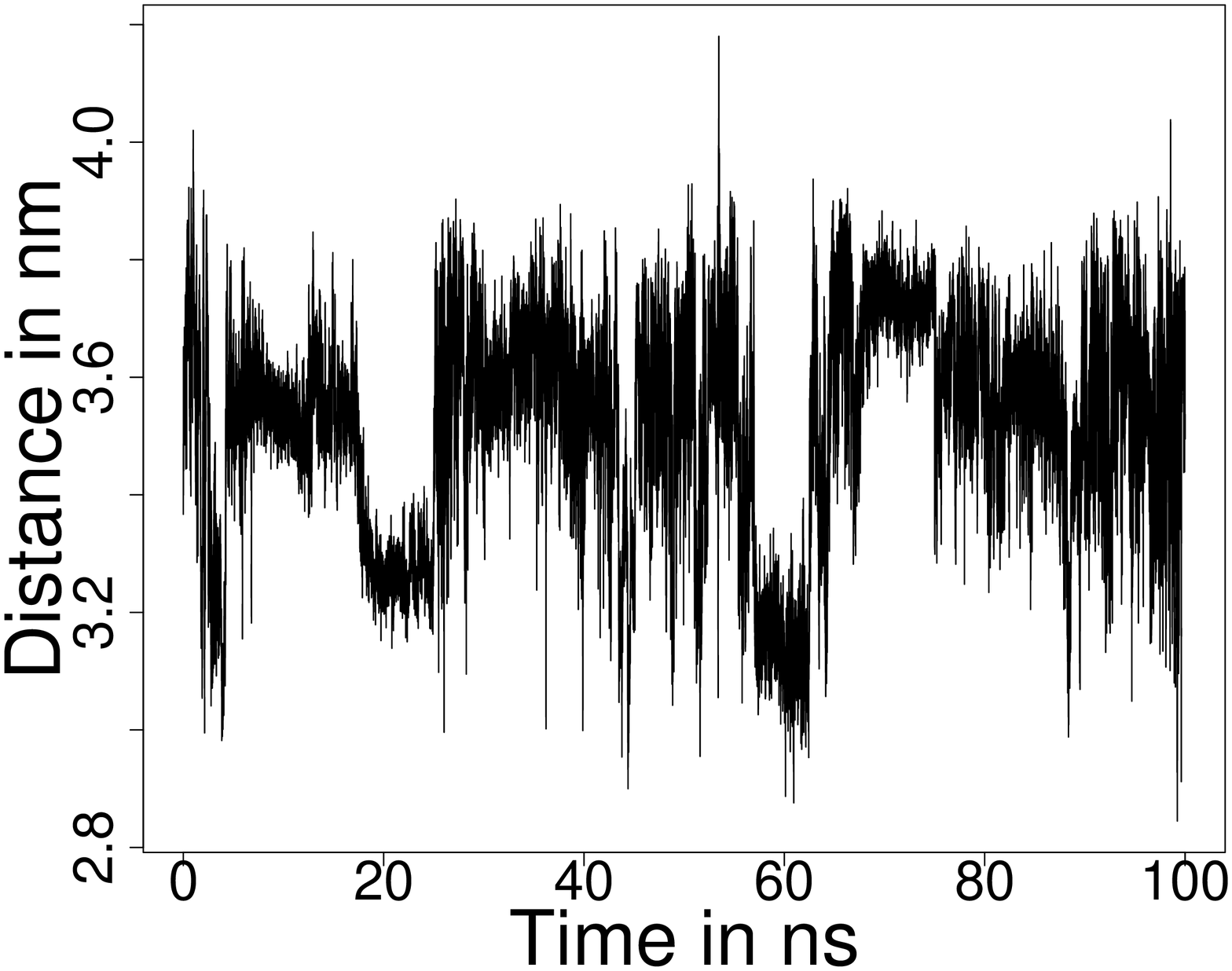}
	\end{subfigure}
	\begin{subfigure}[h]{0.48\textwidth}
		\includegraphics[width=\linewidth]{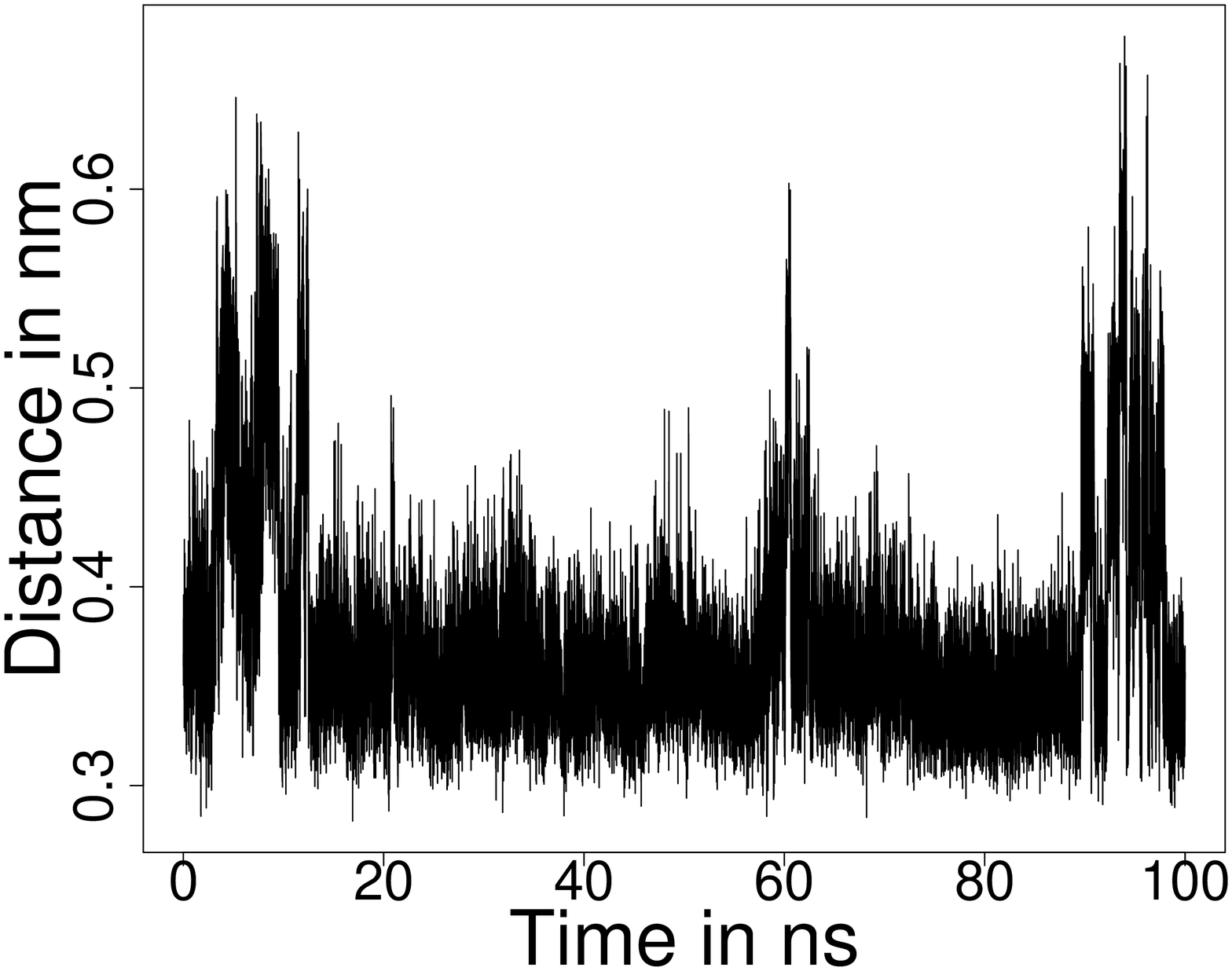}
	\end{subfigure}
	\caption{Distance between the first atom and the first center of mass of aquaporin (left) and the opening diameter $y_t$ over time $t$ (right).}
	\label{fig_Aqua}
\end{figure}
It can be concluded that a linear model $Y=Xb+\epsilon$ holds, where $Y=(y_1,\ldots,y_{20\,000})^T$, $X=(X_1^T,\ldots,X_{20\,000}^T)^T$, $b\in\mathbb{R}^{4\cdot 783}$, $\epsilon\in\mathbb{R}^{20\,000}$. This linear model has two specific features which are intrinsic to the problem: first, the observations are not independent over time and second, $X_t$ is high-dimensional at each $t$ and only few columns of $X$ are relevant for $Y$. \citet{krivobokova2012partial} have shown that the partial least squares (PLS) algorithm performs exceptionally well on this type of data, leading to a small-dimensional and robust representation of proteins, which is able to identify the atomic dynamics relevant for $Y$. \citet{singer2016partial} studied the convergence rates of the PLS algorithm for dependent observations and showed that decorrelating the data before running the PLS algorithm improves its performance. Since $Y$ is a linear combination of columns of $X$, it can be assumed that ${Y}$ and all columns of ${X}$ have the same correlation structure. Hence, it is sufficient to estimate $\Sigma=\mbox{cov}(Y)$ to decorrelate the data for the PLS algorithm, i.e., 
$\Sigma^{-1/2}Y=\Sigma^{-1/2}{X}b+\Sigma^{-1/2}\epsilon$
results in a standard linear regression with independent errors. 

Our goal now is to estimate $\Sigma$ and compare the performance of the PLS algorithm on original and decorrelated data.
For this purpose, we divided the data set into a training and a test set, each with $p=10\,000$ observations. First, we tested whether the data are stationary. The augmented Dickey-Fuller test confirmed stationarity for $Y$ with a $p$-value$<0.01$. The Hurst exponent of $Y$ is $0.85$, indicating moderate long-range dependence supported by a rather slow decay of the sample covariances, see the grey line in the left plot of Fig. \ref{PLS-results}. Therefore, we set $q=1$ for our estimator to match the low smoothness of the corresponding spectral density. Application of the R package \texttt{forecast}, that implements the approach of  \citet{guerrero1993time} to estimate the Box-Cox transform parameter, confirms that the log-transform is appropriate for these data. The bin number is set to $T=2500$, i.e., $\upsilon\approx 0.85$. The smoothing parameter of the log-spectral density is selected with generalized cross-validation. 
The black line in the left plot of  Fig. \ref{PLS-results} confirms that the covariance matrix estimated with our method almost completely decorrelates the channel diameter $Y$ on the training data set.  Next, we estimated  the regression coefficients $b$ with the usual partial least squares algorithm, ignoring the dependence in the data. Finally, we estimated $b$ with the partial least squares algorithm that takes into account dependence using our covariance estimator $\widehat{\Sigma}$. Based on these regression coefficient estimators, the prediction on the test set was calculated. The plot on the right side of Fig. \ref{fig_Aqua} shows the Pearson correlation between the true channel diameter on the test set and the prediction on the same test set based on raw (in grey) and decorrelated data (in black). Obviously, the performance of the partial least squares algorithm on the decorrelated data is significantly better for smaller numbers of components. In particular, with just one component, the correlation between the true opening diameter on the test set and its prediction that takes into account the dependence in the data is already $0.45$, while it is close to zero for the partial least squares method that ignores the dependence in the data. \citet{krivobokova2012partial} showed that the estimator of $b$ based on one PLS component is exactly the ensemble-weighted maximally correlated mode, which is defined as the collective mode of atoms that has the highest probability to achieve a specific alteration of the response $Y$. Therefore, an accurate estimator of this quantity is crucial for the interpretation of the results and can only be achieved if the dependence in the data is taken into account.  
\begin{figure}[h]
	\centering
	\begin{subfigure}[h]{0.48\textwidth} 	
		\includegraphics[width=\linewidth]{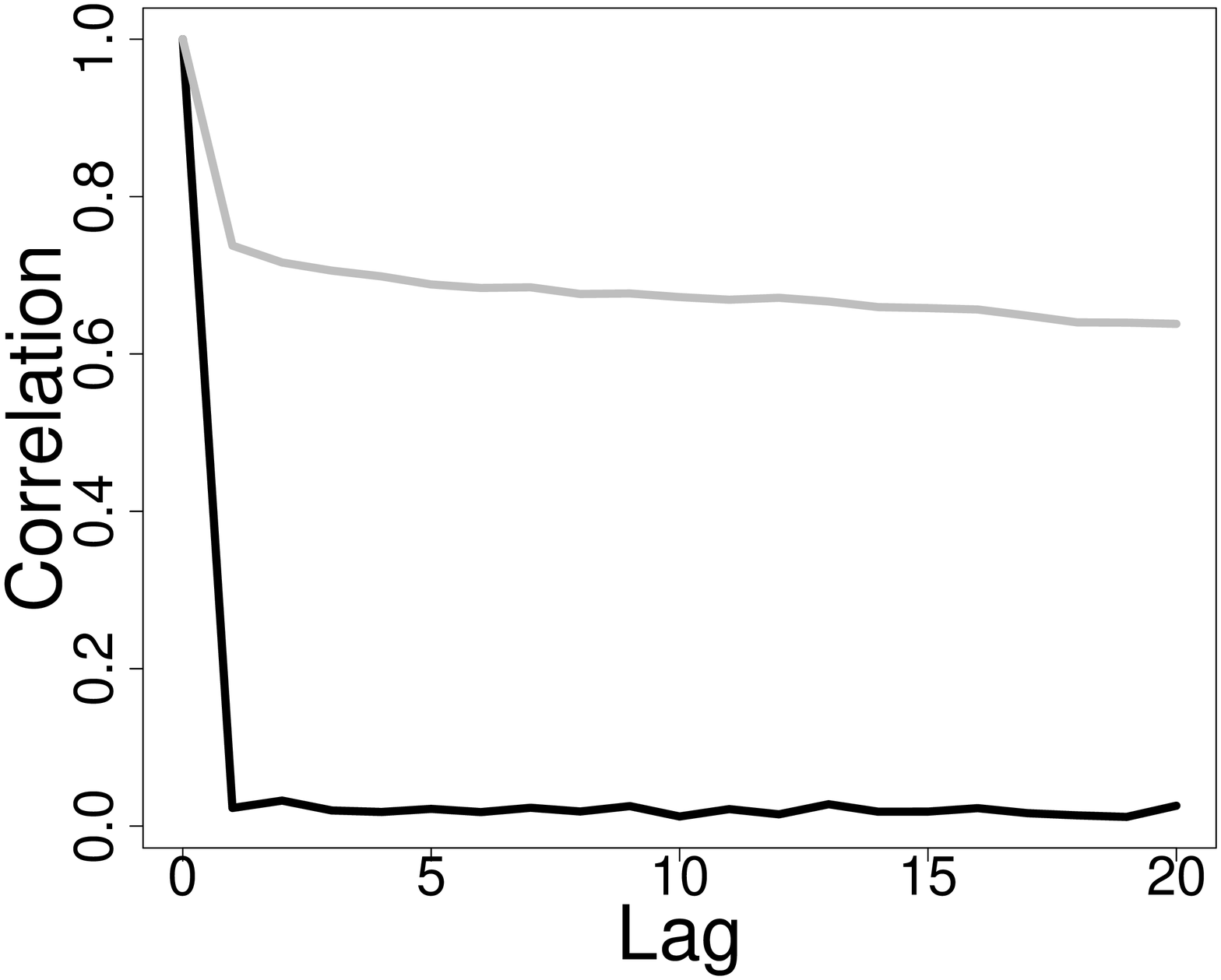}
	\end{subfigure}
	\begin{subfigure}[h]{0.48\textwidth}
		\includegraphics[width=\linewidth]{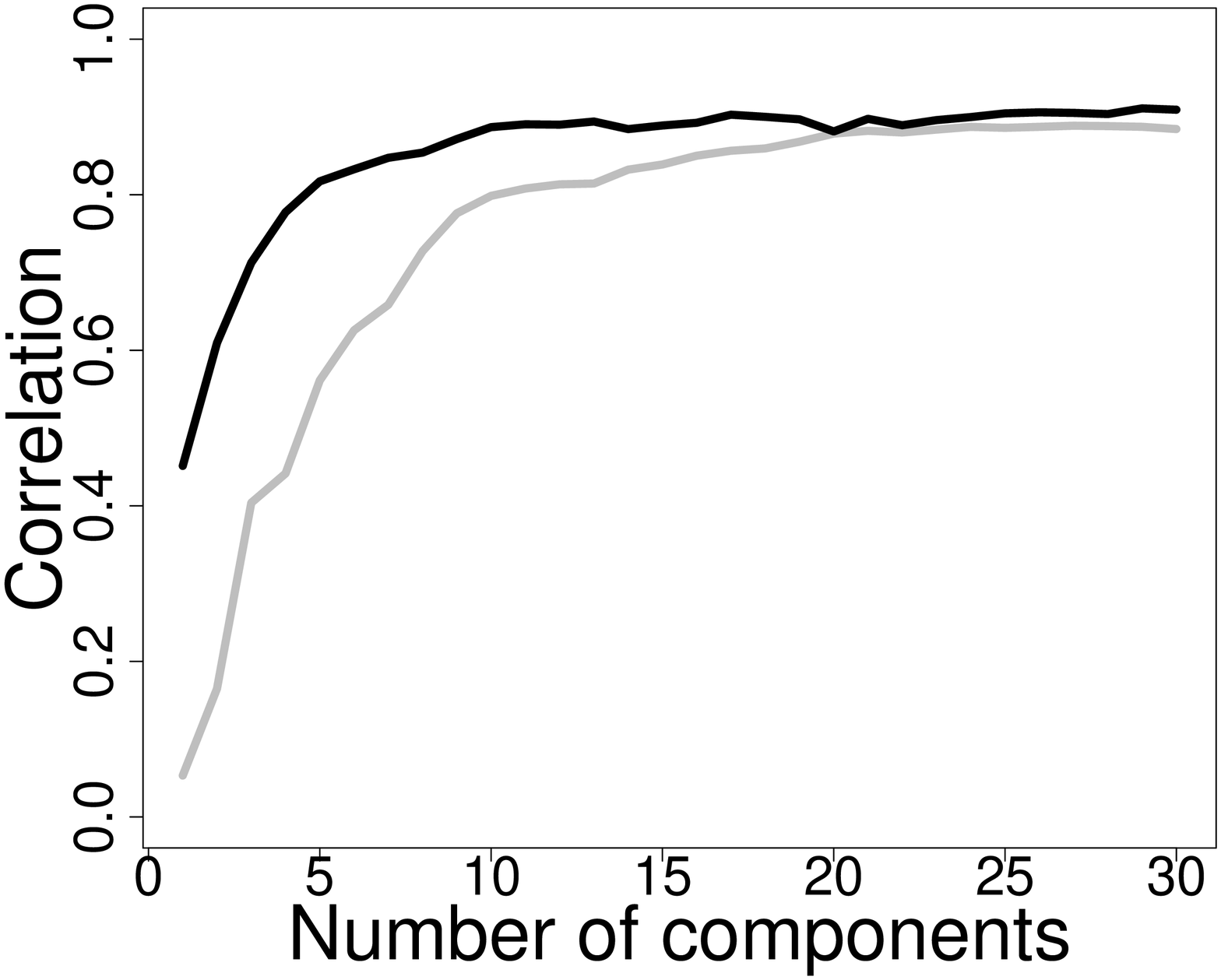}	
	\end{subfigure}		
	\caption{On the left, the correlation function of $Y$ (grey) and of $\widehat{\Sigma}^{-1/2}Y$ (black), where $\widehat{\Sigma}$ is estimated with our method; On the right, correlation between the true values on the test data set and prediction based on partial least squares (grey) and corrected partial least squares (black).}
	\label{PLS-results}
\end{figure}

Estimating $\Sigma$ with a tapered covariance estimator has two practical problems. First, since we only have a single realization of a time series $Y$, i.e., $n=1$, there is no data-driven method for  selecting the tapering parameter. Second, the tapering estimator turned out  to be not positive definite for the data at hand. To solve the second problem, we truncated the corresponding spectral density estimator $\hat{f}_{\text{tap}}$ to a small positive value, i.e., $\hat{f}_{\text{tap}}^+=\max\{\hat{f}_{\text{tap}}, 1/\log p \}$ \citep[see][]{cai2013optimal,mcmurry2010banded}. To select the tapering parameter with cross-validation, we experimented with different subseries lengths and found that the tapering estimator is very sensitive to this choice. For example, estimating the tapered covariance matrix based on subseries of length $8/15/30$ yields a correlation of $0.42/0.53/0.34$ between the true diameter and the first component, respectively.  

Altogether, our proposed estimator is fully data-driven, fast even for large sample sizes, automatically positive definite and can handle certain long-memory processes. The protein data example and further simulations in the Supplementary Material suggest that our approach yields robust estimators even when data are not normally distributed.  In contrast, the tapering estimator is not data-driven in all data scenarios and must be manipulated to become positive definite. To the best of our knowledge, the tapering estimator has been not studied for non-Gaussian data. Our method is implemented in the R package \texttt{vstdct}. 



\begin{appendix}
	\label{appendix}
	\appendix
	\allowdisplaybreaks
	\numberwithin{equation}{section}
	\section{Appendix}\label{app:aux}
	Throughout the appendix, we denote by $c, c_1, C, C_1, \dots $ etc. generic constants, that are independent of  $n$ and $p$. To simplify the notation, the constants are sometimes skipped and we write $\lesssim$ for less than or equal to up to constants.

	\setcounter{lemma}{1}
	\subsection{Proof of Lemma~1} \label{app:lemma1}
	We embed the $p$-dimensional Toeplitz  matrix $ \Sigma=\text{toep}(\sigma_0,\dots, \sigma_{p-1})$ in a $(2p-2)$-dimensional circulant matrix  $\widetilde{ \Sigma}=\text{toep}(\sigma_0,\dots, \sigma_{p-1}, \sigma_{p-2},\dots, \sigma_{1})$. 
	Then, $\widetilde{ \Sigma}=  U  \Lambda  U^*,$	where $ U=(2p-2)^{-1/2}[\exp\left\{\pi \texttt{i} (k-1)(l-1)/(p-1)\right\}]_{k,l=1}^{2p-2}$  with the conjugate transpose $ U^*$, and $ \Lambda$ is  a diagonal matrix with the $k$th diagonal value for $k=1,...,p$ given by
	\begin{align*}
	\lambda_k=&\sum_{l=1}^{p} \sigma_{l-1} \exp\{ \pi \texttt{i} x_k(l{-}1)\} {+}\sum_{l=p+1}^{2p-2} \sigma_{2p-1-l} \exp\{ \pi \texttt{i} x_k(l{-}1)\},
	\end{align*} where $x_k=(k-1)/(p-1)$.   
	By symmetry, $\lambda_k=\lambda_{2p-k}$ for $k=p+1,...,2p-2$.
	Furthermore,	
	$ \Sigma=  V^*  \Lambda V,$
	where $ V \in \mathbb{C}^{(2p-2)\times p}$ contains the first $p$ columns of $ U$. 
	Then, for $i,j=1,\dots,p$,	
	\begin{align*}
	( D  \Sigma  D )_{i,j}=( D  V^*  \Lambda  V   D )_{i,j}=   \sum_{r=1}^{2p-2} \lambda_r \sum_{q=1}^{p}  D_{iq}    V^*_{qr} \sum_{s=1}^{p}  V_{rs}  D_{sj}.
	\end{align*}
	Define the scaling factor $a_{s}= 2^{-1/2}$  for $s=1,p$ and $a_s=1$ for $s=2,...,p-2$, appearing in the definition of the discrete cosine transform I matrix.  Then,
	\begin{align}
	\sum_{s=1}^{p}  V_{rs}  D_{sj} &= (p-1)^{-1} \sum_{s=1}^{p} a_{s} a_{j} \exp\{ \pi \texttt{i} (r-1)x_s\} \cos\{ \pi (j-1)x_s\} \nonumber \\
	&\quad \pm \frac{a_j}{p-1} \exp(0)\cos(0) \pm \frac{a_j}{p-1} \exp\{\pi\mathrm{i}(r-1)\})\cos\{\pi(j-1)\}  \nonumber \\
	&= a_j\frac{2^{1/2}-2}{p-1} \mathbbm{1}_{\{|j-r| \text{ even}\}} +\frac{a_j}{p-1} \sum_{s=1}^{p}\exp\{ \pi \texttt{i} (r-1)x_s\} \cos\{ \pi (j-1)x_s\}\nonumber\\ 
	&=a_jb(j,r) +a_j\texttt{i}c(j,r), \nonumber
	\end{align}
	where $\mathbbm{1}$ is the indicator function and
	\begin{align*}
	&b(j,r)= \frac{2^{1/2}-2}{p-1} \mathbbm{1}_{\{|j-r| \text{ even}\}} +\frac{1}{p-1}\sum_{s=1}^{p} \cos\{ \pi (r-1)x_s\} \cos\{ \pi (j-1)x_s\},\\
	&c(j,r)=\frac{1}{p-1}\sum_{s=1}^{p} \sin\{ \pi (r-1)x_s\} \cos\{ \pi (j-1)x_s\}.
	\end{align*}
	By the symmetry properties of cosine and sine, $b(j,r)=b(j,2p-r)$ and $c(j,r)=-c(j,2p-r)$ for  $r=p+1,\dots,2p-2$.
	Using $ (\sum_{s=1}^{p} V_{rs} D_{sj} )^* =  \sum_{q=1}^{p} D_{jq} V^*_{qr}$, we obtain
	\begin{align}
	( D  \Sigma  D )_{i,j} = 
	&\begin{cases}
	a^2_j\sum_{r=1}^{2p-2} \lambda_r \left \{b(j,r)^2+c(j,r)^2 \right \}, &i=j\\
	a_ia_j\sum_{r=1}^{2p-2}  \lambda_r(b(i,r)+\texttt{i}c(i,r))(b(j,r)-\texttt{i}c(j,r)), &i\neq j.
	\end{cases} \label{absum}	
	\end{align}
	Some calculations show, for $r=1,\dots p$, that
	\begin{align}  \linespread{0.3}
	&\frac{1}{p{-}1}\sum_{s=1}^{p} \cos\{ \pi (r{-}1)x_s\}  \cos\{ \pi (j{-}1)x_s\}
	=\begin{cases}
	\frac{1}{p-1}, & r\neq j, r-j\text{ even}\\
	0, & r-j\text{ odd}\\
	\frac{p+1}{2p-2},&r=j\neq 1,p\\
	\frac{p}{p-1},&r=j=1,p
	\end{cases} \label{coscos}\\
	&\frac{1}{p{-}1}\sum_{s=1}^{p} \sin\{ \pi (r{-}1)x_s\}  \cos\{ \pi (j{-}1)x_s\}\nonumber \\
	&=\frac{\{1-(-1)^{r-j}\}}{4(p-1)} \left [\text{cot} \{\pi(r-j)/(2p-2)\}  +\text{cot} \{\pi(j+r-2)/(2p-2)\} \right].\label{sincos} 
	\end{align}
	Using the Taylor expansion of the cotangens, i.e., $\cot(x)=x^{-1}-x/3-x^3/45+...$, for $0<|x|<\pi$, one obtains for $r=1,\dots p$
	\begin{equation}
	c(j,r)=\frac{(r-1)\{1-(-1)^{r-j}\}}{\pi\{(r-1)-(j-1)\}\{(r-1)+(j-1)\}} +\mathcal{O}\left \{ \frac{2\pi(2r-2)}{3(2p-2)^2}\right \}, \label{sincos2} 
	\end{equation} where the $\mathcal{O}$ term does not depend on $j$ and the constant does not depend on $r,p$.\\
	
	{\centering \textit{Case $i=j$}}
	
	The equations (\ref{absum}) -- (\ref{sincos}) imply
	\begin{align*}
	(\bm D \bm \Sigma \bm D )_{j,j}= & \, a^2_j \sum_{\substack{r=1 \\ r \neq j,\\	r\neq 2p-j}}^{2p-2}  \lambda_r \left [\frac{(2^{1/2}-2)^2+1+2(2^{1/2}-2)(p-1)}{(p-1)^{2}}\mathbbm{1}_{\{|r-j| \text{ even}\}} +	c(j,r)^2\right ] \\
	&\,+a_j^2\lambda_j \cdot\begin{cases}
	2\left(\frac{2^{1/2}-2}{p-1} +\frac{p+1}{2p-2}\right)^2, &j\neq 1,p\\
	\left(\frac{2^{1/2}-2}{p-1} +\frac{p}{p-1}\right)^2, &j=1,p
	\end{cases} \\
	= & \,a^2_j \sum_{\substack{r=1 \\ r \neq j,\\	r\neq 2p-j}}^{2p-2}  \lambda_r \left [\frac{(2^{1/2}-1)^2}{(p-1)^{2}}\mathbbm{1}_{\{|r-j| \text{ even}\}} +	c(j,r)^2\right ] +\lambda_j \cdot\begin{cases}
	\frac{2(2^{1/2}-3/2 +p/2)^2}{(p-1)^2}, &j\neq 1,p\\
	\frac{(2^{1/2}-2 +p)^2}{2(p-1)^2}, &j= 1,p
	\end{cases}\\
	= &\,\mathcal{O}(p^{-1}) + 2a^2_j\sum_{r=1}^{p-1} \lambda_r  c(j,r)^2 +\lambda_j\{0.5+\mathcal{O}(p^{-1})\} ,
	\end{align*} where we used the symmetry properties of $\lambda_r$ and that by assumption $\| f\|<M_0$. Note that the $\mathcal{O}$ terms do not depend on $j$.
	Since the complex exponential function is Lipschitz continuous with Lipschitz constant $L=1$, it holds $\lambda_{r}= \lambda_j +L_{r,j} |r-j|(p-1)^{-1}$ where $-1\leq L_{r,j}\leq 1$ is a constant depending on $r,j$. 
	Then,
	\begin{align*}
	\sum_{r=1}^{p-1}  \lambda_r  c(j,r)^2&=\lambda_j\sum_{r=1}^{p-1}   c(j,r)^2 + p^{-1} \sum_{r=1}^{p-1}  L_{r,j}|r-j| c(r,j)^2.%
	\end{align*}
	Since $\sum_{r=1}^{p-1}  \lambda_r  c(1,r)^2=-\sum_{r=1}^{p-1}  \lambda_r  c(p,r)^2,$ it is sufficient to consider $j=1,...,p-1$. We begin with first sum.  For a shorter notation, we use  $k=r-1$ and $l=j-1$ in the following. Then, summing the squares of the first term in (\ref{sincos2}) for $l=0,...,p-2$ yields
	\begin{align*}
	\frac{1}{\pi^2}\sum_{k=0}^{p-2}   \frac{k^2 \{1{-}(-1)^{k-l}\}^2}{ (k{-}l)^2(k{+}l)^2}{=} \frac{4}{\pi^2}\begin{cases}
	\sum_{x=1}^\infty \frac{1}{(2x{-}1)^2} {-} R_1(l,p)=\frac{\pi^2}{8}{-}R_1(l,p), &l=0\\
	\sum_{x=1}^\infty \frac{(2x)^2}{(2x{-}l)^2(2x{+}l)^2} {-} R_2(l,p)=\frac{\pi^2}{16}{-} R_2(l,p), &l \text{ odd}\\
	\sum_{x=1}^\infty \frac{(2x{-}1)^2}{(2x{-}1{-}l)^2(2x-1+l)^2} {-} R_2(l,p)=\frac{\pi^2}{16}{-} R_3(l,p), &\text{else}
	\end{cases}	
	\end{align*} see Chapter 23.2 of \citet{abramowitz1964handbook} on sums of reciprocal powers.
	If $p$ is even, then the residual terms are given by
	\begin{align*}
	R_1(l,p)&=\sum_{x=(p-2)/2}^\infty \frac{1}{(2x-1)^2} = \frac{ \phi^{(1)}\{(p-3)/2\}}{4},\\
	R_2(l,p)
	&= \sum_{x=p/2}^\infty  \frac{(2x)^2}{(2x{-}l)^2(2x{+}l)^2}\\
	&=\frac{2\phi\{(p{+}l)/2\}{-}2\phi \{(p{-}l)/2\}{+}
		l [\phi^{(1)}\{(p{+}l)/2\}{+} \phi^{(1)}\{(p{-}l)/2\}]}{16l}, \\
	R_3(l,p)&= \sum_{x=p/2}^{\infty} \frac{(2x-1)^2}{(2x-1-l)^2(2x-1+l)^2}\\
	&=\frac{2\phi\{(p{-}1{+}l)/2\}{-}2\phi \{(p{-}1{-}l)/2\}{+}l [\phi^{(1)}\{(p{-}1{+}l)/2\}{+} \phi^{(1)}\{(p{-}1{-}l)/2\}]}{16l}, 
	\end{align*}
	where $\phi$ and $\phi^{(1)}$ denote the digamma function and its derivative. If $p$ is odd, similar remainder terms can be derived.
	To see that $R_i(l,p)=\mathcal{O}(p^{-1})$ for $i=1,2,3$ and uniformly in $l$ we use that
	asymptotically $\phi(x){\sim} \log(x){-}1/(2x)$ and $\phi^{(1)}(x){\sim} 1/x{+}1/(2x^2).$   
	The mixed term $4/3 \sum_{k=0}^{p-2} \frac{k^2\{1-(-1)^{k-l}\}^2}{(k{-}l)(k{+}l)(2p-2)^2}$ and the squared $\mathcal{O}$ term $16\pi^2/9 \sum_{k=0}^{p-2}\frac{k^2}{(2p-2)^4}$ are both of the order $p^{-1}$.
	Furthermore,
	\begin{align*}
	& \frac{1}{p-1} \sum_{r=1 }^{p-1}  L_{r,j}|r-j| c(r,j)^2= \frac{1}{p-1}  \sum_{k=0}^{p-2}   \frac{k^2\{1{-}(-1)^{k-l}\}^2 L_{k,l}|k-l|
	}{\pi^2(k-l)^2(k+l)^2} =\mathcal{O}\{p^{-1}\log p\},
	\end{align*}
	since the harmonic sum diverges at a rate  of $\log p$. Finally,   $\lambda_{j}= f(x_j)+\mathcal{O}\{p^{-\beta}\log p\}$
	by the uniform approximation properties of the discrete Fourier series for H\"older continuous  functions  \citep[][]{zygmund2002trigonometric}.
	All together, we have shown that $(D \Sigma D )_{j,j}= f(x_j)+ \mathcal{O}\{p^{-1}+ p^{-\beta}\log p\}$, where the $\mathcal{O}$ terms are uniform over $j=1,...,p$.\\
	
	{\centering \textit{Case $i\neq j$ and $|i-j|$ is even}}
	
	In this case, $( D  \Sigma  D )_{i,j} = a_ia_j\sum_{r=1}^{2p-2}  \lambda_r \{b(i,r)b(j,r) + c(i,r)c(j,r)\}$ since $|r-j|,$ and $|r-i|$ are  both either odd or even. Note that $b(i,r)b(j,r)=\mathcal{O}(p^{-2})$ for $r\neq i,j$  and $b(i,r)b(j,r)=\mathcal{O}(p^{-1})$ if $i=r$ or $j=r$, where the $ \mathcal{O}(\cdot)$ terms are uniformly in $i,j$. Since $b(i,r)\geq 0$ and $\|f\|_\infty<M_0$, it follows $a_ia_j\sum_{r=1}^{2p-2}  \lambda_r b(i,r)b(j,r) =\mathcal{O}(p^{-1})$ uniformly in $i,j$.  We proceed similarly as before to show that $a_ia_j\sum_{r=1}^{2p-2}\lambda_r c(i,r)c(j,r)=\mathcal{O}(p^{-1})$. 
	Setting $k=r-1, \,l=j-1, \,m=i-1$ and using that $l\neq m$ and $|l-m|$ is even, one obtains 
	\begin{align*}
	&\frac{1}{\pi^2}\sum_{k=0}^{p-2}   \frac{k^2 \{1-(-1)^{k-l}\}\{(-1)^{k-m}{-}1\}}{ (k{-}l)(k{+}l)(k{-}m)(k{+}m)}\\
	&= \frac{4}{\pi^2}\begin{cases}
	\sum_{x=1}^\infty \frac{(2x)^2}{(2x{-}l)(2x{+}l)(2x{-}m)(2x{+}m)} {-} R_1(l,m,p)={-} R_1(l,m,p), &l,m \text{ odd}\\
	\sum_{x=1}^\infty \frac{(2x{-}1)^2}{(2x{-}1{-}l)(2x{-}1{+}l)(2x{-}1{-}m)(2x{-}1{+}m)} {-} R_2(l,m,p)={-} R_2(l,m,p), &l,m \text{ even,}
	\end{cases}	
	\end{align*}
	where for even $p$ the residual terms are given by 
	\begin{align*}
	R_1(l,m,p)&= \sum_{x=p/2}^\infty\frac{(2x)^2}{(2x{-}l)(2x{+}l)(2x{-}m)(2x{+}m)}\\ 
	R_2(l,m,p)&= \sum_{x=p/2}^\infty\frac{(2x{-}1)^2}{(2x{-}1{-}l)(2x{-}1{+}l)(2x{-}1{-}m)(2x{-}1{+}m)}. 
	\end{align*}
	Similarly as before, one can show that $R_1(l,m,p)$ and $R_2(l,m,p)$ are of the order $\mathcal{O}(p^{-1})$ where the hidden constant does not depend on $l,m$.
	If $p$ is odd, analogous residual terms can be derived. 
	Using similar techniques as before, one can show that the two residual terms and the remaining mixed and square terms vanish at a rate of the order $\mathcal{O}(p^{-1})$ and uniformly in $i,j$.\\
	
	{\centering \textit{Case $i\neq j$ and $|i-j|$ is odd}}
	
	$|r-i|$ and $|r-j|$ are either odd and even, or even and odd. Without loss of generality, assume that $|r-i|$ is even. Then, $( D  \Sigma  D )_{i,j} =a_ia_j\sum_{r=1}^{2p-2}\lambda_r b(i,r)c(j,r)$. Since $b(i,\cdot)$ is an even function, $c(j,\cdot) $ is an odd function and $\lambda_r=\lambda_{2p-r}$, it follows $( D  \Sigma  D )_{i,j} =0$.  \hfill \qed

	\subsection{Periodic Smoothing Splines}\label{app:SSper}
	In this section we recapitulate the results obtained in \citet{schwarz2016unified} for the equivalent kernels of periodic smoothing spline estimators.  Given data points $\{(x_i,Y_i)\}_{i=1}^N$ with true relationship $Y_i=f(x_i)+\epsilon_i$, where $x_i=i/N$ and the residuals $\epsilon_i$ satisfy standard assumptions, the periodic smoothing spline estimator $\hat{f}$ is the solution to
	$$\min_{s\in S_{\text{per}}(2q-1,\underline{x}_N)} \left [ \frac{1}{N} \sum_{i=1}^N \{Y_i-s(x_i)\}^2 + h^{2q} \int_0^1 \{ s^{(q)}(x)\}^2\,\text{d}x\right ],$$ 
	where $h>0$ is the smoothing parameter, $q\in\mathbb{N}$  the penalty order  and $S_{\text{per}}(2q-1,\underline{x}_N)$ the periodic spline space of degree $2q-1$ based on knots $\underline{x}_N{=}(x_i)_{i=0}^N{=}\left (i/N\right)_{i=0}^N$.  
	\citet{schwarz2016unified} have shown that  $\hat{f}(x)=N^{-1}\sum_{i=1}^NW(x,x_i)Y_i$, where $W(x,x_i)$ is known as effective kernel of a periodic smoothing spline estimator and its explicit expression is 
	$$W(x,x_i)=\sum_{j=1}^N \frac{\Phi \{ Nx + q, \exp (-2\pi  \mathtt{i} j/N)\} \overline{\Phi \{ Nx_i + q, \exp (-2\pi  \mathtt{i} j/N)\}}}{Q_{2q-2}(j/N)+h^{2q}(2\pi j)^{2q} \mbox{sinc} (\pi j/N)^{2q}},$$
	where  $\Phi(\cdot,\cdot)$ denotes exponential splines \citep{Schoenberg73} with complex conjugate  $\overline{\Phi(\cdot,\cdot)}$. The polynomials $Q_{2q-2}(x)$ are formally defined as $Q_{2q-2}=\sum_{l=-\infty}^\infty\mbox{sinc}\{\pi(x+l)\}^{2q}$; for the explicit expressions see Section~4.1.1 in \citet{schwarzdiss}. A scaled version $K(x,t)$ of $W(x,t)$ in terms of the bandwidth parameter $h$ is defined as
	$K(x,t)=hW(hx,ht).$ We denote by $K_h(x,t)=K(x/h,t/h)=hW(x,t)$. Also, we use that $W(x,t)=\sum_{l=-\infty}^\infty \mathcal{W}(x,t+l)$ \citep[see Lemma~14 in][]{schwarzdiss}, where 
	$$\mathcal{W}(x,t)=\int_0^N\frac{\Phi \{ Nx + q, \exp (-2\pi  \mathtt{i} u)\} \overline{\Phi \{ Nx_i + q, \exp (-2\pi  \mathtt{i}u)\}}}{Q_{2q-2}(u)+h^{2q}(2\pi j)^{2q} \mbox{sinc} (\pi u)^{2q}}\, \text{d}u$$  
	is the asymptotic equivalent kernel of smoothing spline estimators on $\mathbb{R}$. Finally, we denote $\mathcal{K}_h(x,t)=\mathcal{K}(x/h,t/h)=h\mathcal{W}(x,t).$ 
	
	\subsection{Proof of Theorem~\ref{theorem1}} \label{app:theorem1}
	
	The structure of the proof is as follows. First, we derive the $L_\infty$ rate of the periodic smoothing spline estimator $\widehat{H(f)}$. Then, using the Cauchy-Schwarz inequality and a mean value argument, the convergence rate of the spectral density estimator $\hat{f}$ is established. With the relationship $E\|\widehat{\Sigma}-\Sigma\|\leq E \|\hat{f} - f\|_\infty^2 $ the first claim of the theorem follows. Finally, we prove the second statement on the precision matrices. For the sake of clarity, the proof of the auxiliary Lemmas~\ref{lemma:decay}, ~\ref{lemma:rewriteBins} and ~\ref{lemma:representY} are listed separately in Section~\ref{app:auxlemmas}. 	
	\subsubsection{An upper bound on  $E  \|\widehat{H(f)} - H(f)\|_\infty^2 $}
	\begin{prop}\label{prop:riskHf}
		Let $\Sigma=\Sigma(f)$ with $f \in \mathcal{F}_{\beta}$ such that $\beta>0$. If $h>0$ such that  $h\to 0$ and $hT\to \infty$,  then with $T=\lfloor p^{\upsilon}\rfloor$ for any  $\upsilon\in(1-\min\{1,\beta\}/3,1)$, the estimator $\widehat{H(f)}$ described in Section \ref{sec:method} with $q=\max\{1,\gamma\}$ satisfies for $p\to \infty$ and $n$ such that $p^{\min\{1,\beta\}}/n\to c\in(0,\infty]$ 
		$$\ E \|\widehat{H(f)} - H(f)\|_\infty^2 =\mathcal{O} \left\{ \log(np)/(nph)\right\} + \mathcal{O}(h^{2\beta}).$$
	\end{prop}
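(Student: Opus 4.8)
The plan is to express the periodic smoothing spline estimator $\widehat{H(f)}$ through its effective kernel $W$ from \citet{schwarz2016unified} (recalled in Appendix~\ref{app:SSper}), and then control the resulting bias and stochastic terms, keeping careful track of the several distinct error sources that distinguish our setting from the standard independent Gaussian regression: (a) the diagonalization error of Lemma~\ref{lemma:DCTdiag}, (b) the dependence among $W_{i,1},\dots,W_{i,p}$, and (c) the deviation from Gaussianity introduced by binning and the variance-stabilizing transform, quantified in \citet{cai2010nonparametricfest}. Write $\widehat{H(f)}(x)=(2T-2)^{-1}\sum_k W(x,x_k)Y_k^*$ and decompose $Y_k^* = H\{f(x_k)\} + \xi_k + \eta_k + \zeta_k$, where $\xi_k$ is the ``ideal'' mean-zero Gaussian noise with variance $m^{-1}$, $\eta_k$ collects the bias/remainder from the VST-plus-binning approximation (controlled via the lemmas of \citet{cai2010nonparametricfest}, adapted to $n\ge 1$ so that the effective number of observations per bin is $m=np/T$), and $\zeta_k$ is the contribution of the diagonalization error $\mathcal{O}(p^{-1}+p^{-\beta}\log p)$ propagated through $H$. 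Then
$$
E\|\widehat{H(f)}-H(f)\|_\infty^2 \lesssim \underbrace{\|\,\tfrac{1}{2T-2}\sum_k W(\cdot,x_k)H\{f(x_k)\} - H(f)\,\|_\infty^2}_{\text{deterministic bias}} + E\Big\|\tfrac{1}{2T-2}\sum_k W(\cdot,x_k)\xi_k\Big\|_\infty^2 + (\text{cross terms from }\eta_k,\zeta_k).
$$

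For the deterministic bias term I would use the standard smoothing-spline bias bound: since $H(f)$ inherits the H\"older smoothness $\beta=\gamma+\alpha$ of $f$ (the maps $f\mapsto\log f$ and $y\mapsto H(y)$ are smooth on the compact range bounded away from $0$ by $f\in L_\delta$), and with penalty order $q=\max\{1,\gamma\}\ge \beta/2$ is not required — rather $q\ge$ the relevant smoothness index so that the reproducing-kernel bias is $O(h^{2\min\{q,\beta\}}) = O(h^{2\beta})$ under the stated regime; here I would invoke the effective-kernel moment/localization properties from \citet{schwarz2016unified} together with the approximation $W(x,t)\approx h^{-1}\mathcal{K}_h(x,t)$ and the exponential decay of $\mathcal{K}$. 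The discretization error from replacing the integral against $H(f)$ by the average over the $2T-2$ design points contributes a term of order $(Th)^{-1}$ or smaller, negligible since $hT\to\infty$. For the stochastic term $E\|(2T-2)^{-1}\sum_k W(\cdot,x_k)\xi_k\|_\infty^2$, I would use a chaining/maximal-inequality argument (Gaussian, or sub-exponential after the VST) over the periodic interval: the supremum over $x$ of a kernel-weighted sum of $\sim T$ near-independent terms each of variance $m^{-1}$, with kernel bandwidth $h$, yields the familiar rate $\log(2T-2)/(Th\cdot m)$; substituting $m=np/T$ gives $\log(np)/(nph)$, the claimed first term. Here the logarithmic factor is exactly the price of the $L_\infty$ (rather than $L_2$) norm.

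The main obstacle — and the part requiring genuinely new work beyond \citet{cai2010nonparametricfest} — is handling the dependence among $W_{i,1},\dots,W_{i,p}$ and showing it does not inflate the variance bound. The idea is that Lemma~\ref{lemma:DCTdiag} gives that the off-diagonal entries of $D^T\Sigma D$ are uniformly $\mathcal{O}(p^{-1}+p^{-\beta}\log p)$, so $D^TY_i$ is ``almost'' a vector of independent Gaussians; I would quantify the covariances between the binned-and-transformed variables $Y_k^*$, show they are summably small, and feed this into the maximal inequality via an auxiliary lemma (along the lines of Lemmas~\ref{lemma:rewriteBins} and \ref{lemma:representY}) that represents $Q_k$ as a sum whose dependent part is asymptotically negligible after centering. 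The choice $T=\lfloor p^\upsilon\rfloor$ with $\upsilon>1-\min\{1,\beta\}/3$ is precisely what makes the propagated diagonalization error $\zeta_k$ (which, after going through $H^{-1}$ and the gamma moments, scales like $(p^{-1}+p^{-\beta}\log p)$ times a factor depending on $m$ and $T$) of smaller order than $h^{2\beta}\asymp\{\log(np)/(np)\}^{2\beta/(2\beta+1)}$; I would verify this inequality explicitly at the end. Finally, the condition $p^{\min\{1,\beta\}}/n\to c\in(0,\infty]$ guarantees that the diagonalization error, not the Gaussian-approximation error, is the binding constraint, so that adding samples cannot help beyond the $\beta<1$ barrier — a remark I would record but which does not affect the stated upper bound.
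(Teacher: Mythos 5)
Your plan follows the paper's proof almost step for step: represent $\widehat{H(f)}$ through the effective kernel of \citet{schwarz2016unified}, decompose $Y_k^*$ into the mean $H\{f(x_k)\}$ plus a deterministic error, a Gaussian approximant, and a non-Gaussian remainder (the paper's Lemma~\ref{lemma:representY} plays exactly the role of your auxiliary lemma), then treat bias and variance separately, with the kernel's exponential decay and vanishing-moment properties giving the $\mathcal{O}(h^{2\beta})$ squared bias and a maximal inequality over the $\tilde T$ design points supplying the $\log(np)/(nph)$ stochastic term. That is the paper's route, so on the level of strategy your proposal is right.

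There is, however, a concrete gap in your treatment of the stochastic part that the paper has to work to close. You write that the stochastic term can be handled by a ``Gaussian, or sub-exponential after the VST'' maximal inequality, and you relegate the contribution of what you call $\eta_k$ and $\zeta_k$ to unanalyzed ``cross terms.'' But the non-Gaussian remainder coming from the quantile-coupling step (the paper's $\xi_k$) is not controlled by a sub-exponential tail bound: what is available from the Cai--Zhou/Brown lemmas, adapted to our dependent and biased setting in Lemma~\ref{lemma:representY}, is only a moment bound $E|\xi_k|^{\ell}\lesssim(\log m)^{2\ell}\{m^{-\ell}+(T^{-1}+T^{-1}p^{1-\beta}\log p)^{\ell}\}$. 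To feed that into an $L_\infty$ maximal inequality one must truncate $\xi_k$ at a level $R$ (the paper takes $R=m^{-1/2}$), apply an Azuma-type sub-Gaussian bound to the bounded piece $\xi_k^-$, and control the unbounded piece $\xi_k^+$ crudely through a high-order moment with $\ell$ chosen large enough that the resulting term is $o\{(np)^{-1}\}$; the admissible $\ell$ depends on $\upsilon$ and $\beta$, which is precisely where the constraint $\upsilon\in(1-\min\{1,\beta\}/3,1)$ bites. Likewise, the Gaussian approximants $\zeta_k$ are not independent; their pairwise covariances are $\mathcal{O}\{p^{-2}+p^{-2\beta}(\log p)^2\}$ and must be carried explicitly into the variance that enters the Gaussian maximal inequality, then shown to be $\mathcal{O}(h^{2\beta})$ under the assumed regime. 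Without the truncation argument and the explicit covariance accounting your bound on the stochastic term does not go through as stated. (Separately, the bias order should read $\mathcal{O}(h^{2\min\{2q,\beta\}})$ rather than $\mathcal{O}(h^{2\min\{q,\beta\}})$; with $q=\max\{1,\gamma\}$ one indeed has $2q\geq\beta$, so the conclusion $\mathcal{O}(h^{2\beta})$ is unaffected, but the exponent as you wrote it would be too small whenever $\gamma\geq 1$.)
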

	
	\begin{proof}	Application of the triangle inequality yields a bias-variance decomposition
		\begin{align}\label{eq:BiasVar}
		&E\|\widehat{H(f)}- H(f)\|_\infty^2  \leq 2E\|\widehat{H(f)}- E\{\widehat{H(f)}\}\|_\infty^2 + 2 \|E\{\widehat{H(f)}\}- H(f)\|_\infty^2.
		\end{align}
		Set $\tilde{T}=2T-2$ and ${x}_k=(k-1)/\tilde{T}$ for $k=1,...,\tilde{T}.$ The periodic smoothing spline estimator $\widehat{H(f)}$ can be represented as a kernel estimator with respect to a kernel function $W(x,y)$ or the scaled version $K_h(x,t)=hW(x,t)$. An explicit representation of the kernel is derived in  \cite{schwarz2016unified}. A summary of their results is given in the Appendix~\ref{app:SSper}.  Lemma~4 listed in the Section~\ref{app:auxlemmas} gives the following decomposition of $\widehat{H(f)}(x)$ for $x\in[0,1]$ into a deterministic,  a Gaussian, and a non-Gaussian part
		\begin{align*}
		\widehat{H(f)}(x)&= \frac{1}{\tilde{T}} \sum_{k=1}^{\tilde{T}} W(x,x_k) Y^*_k= \frac{1}{\tilde{T}h}  \sum_{k=1}^{\tilde{T}} K_h(x,x_k) \left [H \{f(x_k)\} +\epsilon_k+ \zeta_k+\xi_k\right ], 
		\end{align*}
		where $|\epsilon_k|\lesssim(np)^{-1}+(np)^{-\beta}\log p$, $\zeta_k\sim \mathcal{N}(0,m^{-1})$  with $\mbox{cov}(\zeta_k,\zeta_l)= \mathcal{O} \{ p^{-2} +p^{-2\beta} (\log p)^2\}$ for $k\neq l$. The random variable $\xi_k$  satisfies $E|\xi_k|^\ell \lesssim (\log m )^{2\ell} \{m^{-\ell}+(T^{-1}+T^{-1}p^{1-\beta}\log p)^\ell\}$  for each integer $\ell>1$ and has mean zero. Mirroring and renumerating $\zeta_k, \eta_k,\epsilon_k$ is similar as for $Y_k^*\, (k=1,...,\tilde{T})$.  \\
		
		
		{\centering \textit{Upper bound on the variance}}
		
		Using the above representation, one can write
		\begin{equation} \label{var:firstbound}
		E \|\widehat{H(f)}- E\{\widehat{H(f)}\}\|_\infty^2= E  \left \|\frac{1}{\tilde{T}} \sum_{k=1}^{\tilde{T}} W (x, x_k) (\zeta_k+\xi_k) \right \|^2_\infty.
		\end{equation}
		First, we bound the supremum by a maximum over a finite number of points. If $q>1$,	then $W (\cdot, x_k)$ is Lipschitz continuous with constant $L>0$. In this case, it holds  almost surely that
		\begin{align*}
		&\sup_{x\in[0,1]} \left|\frac{1}{\tilde{T}} \sum_{k=1}^{\tilde{T}}  W (x, x_k)( \zeta_k+\xi_k )\right|^2\\
		&\leq  \left [\max_{1\leqslant j \leqslant \tilde{T}} \left |\frac{1}{\tilde{T}} \sum_{k=1}^{\tilde{T}}  W (x_j, x_k)( \zeta_k+\xi_k ) \right| {+}  \sup_{ \substack {x,x^\prime \in [0,1], \\ |x-x^\prime|<1/\tilde{T} }}\left |\frac{1}{\tilde{T}} \sum_{k=1}^{\tilde{T}}  \{W (x, x_k)-  W (x^\prime, x_k)\} ( \zeta_k+\xi_k )\right |\right ]^2\\
		&\leq 2 \max_{1\leqslant j \leqslant \tilde{T}} \left |\frac{1}{\tilde{T}} \sum_{k=1}^{\tilde{T}}  W (x_j, x_k)( \zeta_k+\xi_k ) \right|^2 +\frac{2L^2}{\tilde{T}^4}\left(\sum_{k=1}^{\tilde{T}} \left| \zeta_k+\xi_k\right|\right )^2 .
		\end{align*}
		Using $E\{(\sum_{k=1}^{\tilde{T}} \left| \zeta_k+\xi_k\right| )^2\}\lesssim E\{(\sum_{k=1}^{\tilde{T}} \left| \zeta_k\right|)^2\}+E\{(\sum_{k=1}^{\tilde{T}} \left| \xi_k\right|)^2\}\lesssim \tilde{T}^{2}m^{-1}$, one gets	
		\begin{align} 
		&(\ref{var:firstbound}) \lesssim E\left \{\max_{1\leqslant j \leqslant \tilde{T}} \left |\frac{1}{\tilde{T}} \sum_{k=1}^{\tilde{T}}  W (x_j, x_k)( \zeta_k+\xi_k ) \right|^2 \right\} +o\{(np)^{-1}\}. \label{ineq:sup2max}
		\end{align}
		
		If $q=1$, then  $\sum_{k=1}^{\tilde{T}} W (\cdot, x_k)$ is a piecewise linear function with knots at $x_j=j/\tilde{T}$. The factor $( \zeta_k+\xi_k )$ can be considered as stochastic weights that do not affect the piecewise linear property. Thus,  the supremum is attained at one of the knots $x_j=j/\tilde{T}$ for $j=1,...,\tilde{T}$, and (\ref{ineq:sup2max}) is also valid for $q=1$. Again with $(a+b)^2\leq 2a^2+2b^2$ we obtain
		\begin{align*} 
		&(\ref{var:firstbound}) \lesssim  E \left \{\max_{1\leqslant j \leqslant \tilde{T}} \left |\frac{1}{\tilde{T}} \sum_{k=1}^{\tilde{T}}  W (x_j, x_k) \zeta_k \right|^2 \right\}{+}E \left \{\max_{1\leqslant j \leqslant \tilde{T}} \left |\frac{1}{\tilde{T}} \sum_{k=1}^{\tilde{T}}  W (x_j, x_k)\xi_k  \right|^2 \right\}{+}o\{(np)^{-1}\}.
		\end{align*}
		We start with bounding $T_1=E\{\max_{1\leq j \leq \tilde{T}}   |(\tilde{T}h)^{-1}\sum_{k=1}^{\tilde{T}}K_h(x_j,x_k) \zeta_k|^2 \}$ with Lemma~1.6 of \citet{tsybakov2009introduction}. This requires a bound on $ \|(\tilde{T}h)^{-1}\sum_{k=1}^{\tilde{T}}K_h(x_j,x_k) \zeta_k\|_{\psi_2}^2$  where $\|\cdot\|_{\psi_2}$ denotes the sub-Gaussian norm. In case of a Gaussian random variable the norm equals to the variance. 
		See \cite{vershynin2018high} for further details on the sub-Gaussian distribution.
		
		Using the properties of the kernel function $K_h$ stated in Lemma~2 of the Supplementary Material, we obtain
		\begin{align*}
		&\left 	\|\frac{{1}}{\tilde{T}h}\sum_{k=1}^{\tilde{T}}K_h(x_j,x_k) \zeta_k\right\|_{\psi_2}^2= \text{var}\left \{\frac{{1}}{\tilde{T}h}\sum_{k=1}^{\tilde{T}}K_h(x_j,x_k) \zeta_k\right\}\\
		&=\frac{{1}}{\tilde{T}^2h^2} \sum_{k=1}^{\tilde{T}}K_h(x_j,x_k)^2\text{var} (\zeta_k)+ \frac{{1}}{\tilde{T}^2h^2} \sum_{k=1}^{\tilde{T}}K_h(x_j,x_k)\sum_{l=1}^{\tilde{T}}K_h(x_j,x_l) \text{cov} (\zeta_k,\zeta_l)\\
		&\leq C(Thm)^{-1}+C^\prime\{ p^{-2} + p^{-2\beta}(\log p)^2\}.
		\end{align*}
		Lemma~1.6 of \citep{tsybakov2009introduction} then yields 
		\begin{align} \label{var:zeta-term}
		T_1\lesssim \log(2\tilde{T})[C(Thm)^{-1}+C^\prime\{ p^{-2} + (\log p)^2p^{-2\beta}\}] =\mathcal{O}\{(Thm)^{-1}\log(2\tilde{T})+ h^{2\beta}\}.
		\end{align}
		
		To see this, note that $p^{\min\{1,\beta\}}/n\to c\in(0,\infty]$  implies $p^{-1}=\mathcal{O}(n^{-1})$. Thus, if $\beta>1$, then $$\log(2\tilde{T})\{ p^{-2} + p^{-2\beta}(\log p)^2\} \lesssim\log(2\tilde{T})p^{-2} \lesssim\log(2\tilde{T})(Tm)^{-1}.$$
		Now consider $0<\beta \leq 1$. Recall that $T=\lfloor p^{\upsilon}\rfloor$ for some fixed $\upsilon\in(1-\min\{1,\beta\}/3,1)$. One can find a constant $a=C_\upsilon$ depending on  $\upsilon$ but  not on $n,p$  such that  the inequality $\log(x)\leq x^a/a$ implies
		$\log(2\tilde{T})\log(p)^2p^{-2\beta}T^{2\beta} =\mathcal{O}(1)$. Thus, $\log(2\tilde{T})\log(p)^2p^{-2\beta}=\mathcal{O}(h^{2\beta})$ since $hT\to \infty$ by assumption.
		
		Next, we derive a bound for the second term $T_2{=}E\{\max_{1\leq j \leq \tilde{T}}  |(\tilde{T}h)^{-1}\sum_{k=1}^{\tilde{T}}K_h(x_j,x_k) \xi_k|^2 \}$. 
		The exponential decay property of the kernel $K_h$, see Lemma 2 in the Section~\ref{app:auxlemmas}, yields 
		\begin{align}
		&T_2\lesssim E \left \{\max_{1\leq j \leq \tilde{T}}  \left | \frac{{1}}{\tilde{T}h}\sum_{k=1}^{\tilde{T}}\gamma_h(x_j,x_k) |\xi_k|\right |^2 \right \}, \nonumber
		\end{align} where $\gamma_h(x,t)=\gamma^{|x-t|/h} + \gamma^{1/h} \{\gamma^{(x-t)/h}+ \gamma^{(t-x)/h}\}(1-\gamma^{1/h})^{-1}$ and $\gamma\in (0,1)$ is a constant. For some threshold $R>0$ specified later, define $\xi_k^-=|\xi_k|\mathbbm{1}{\{|\xi_k|\leq R\}}$ and $\xi_k^+=|\xi_k|\mathbbm{1}{\{|\xi_k|>R\}}$.
		Then, 
		\begin{align} \label{var:Tequation}
		T_2\lesssim E\left\{ \max_{1\leq j \leq \tilde{T}} \left |\frac{1}{\tilde{T}h} \sum_{k=1}^{\tilde{T}} \gamma_h(x_j,x_k)  \xi_k^- \right|^2\right\}+E\left \{\max_{1\leq j \leq \tilde{T}} \left|\frac{1}{\tilde{T}h} \sum_{k=1}^{\tilde{T}}\gamma_h(x_j,x_k) \xi_k^+ \right |^2\right \}.
		\end{align}
		
		The first term in (\ref{var:Tequation}) can be bounded again with Lemma~1.6 of \citet{tsybakov2009introduction}. 
		We use the fact that for not necessarily independent random variables $X_1,...,X_N$ with $|X_i|<R \, (i=1,...,N)$ it holds $  \| \sum_{i=1}^N a_i X_i  \|^2_{\psi_2} \leq 4R^2\sum_{i=1}^N a_i^2$ where  $a_1,...,a_N\in\mathbb{R}$ and $R>0$ are constants. 
		This is a consequence of Lemma~1 of \citet{azuma1967weighted} which yields 
		$E\{\exp (\lambda\sum_{i=1}^{N} a_i X_i)  \}\leq \exp ( \lambda^2R^2 \sum_{i=1}^N a_i^2 )$ for all $\lambda\in\mathbb{R}$. 
		Thus, for all $t>0$ holds $ \text{pr} (|\sum_{i=1}^{N}a_i X_i | >t ) \leq 2\exp (-\lambda t + \lambda^2 R^2 \sum_{i=1}^{N}a_i^2).$
		Setting $\lambda= t/2\cdot (R^2 \sum_{i=1}^{N} a_i^2)^{-1}$, it follows that $\sum_{i=1}^{N}a_i  X_i$ has a sub-Gaussian distribution and the sub-Gaussian norm is bounded by $2R(\sum_{i=1}^N a_i^2)^{1/2}$. 
		
		Together, we get $\| (\tilde{T}h)^{-1} \sum_{k=1}^{\tilde{T}} \gamma_h(x_j,x_k)  \xi_k^-\|^2_{\psi_2}\leq  4R^2(\tilde{T}h)^{-2}  \sum_{k=1}^{\tilde{T}} \gamma_h(x_j,x_k)^2 \lesssim R^2  (\tilde{T}h)^{-1},$ where we used the bound on $\gamma_h$ from Lemma~2. 
		Applying Lemma~1.6 of \citet{tsybakov2009introduction} then yields
		\begin{equation} \label{var:xi-term-neg} 
		E\left \{\max_{1\leq j \leq \tilde{T}} \left | \frac{1}{\tilde{T}h}\sum_{k=1}^{\tilde{T}} \gamma_h(x_j,x_k)  \xi_k^- \right |^2 \right\}\lesssim\frac{ R^2}{\tilde{T}h} \log(2\tilde{T}).
		\end{equation}
		
		To bound the second term in (\ref{var:Tequation}), we use the moment bounds for $\xi_k$ derived in Lemma~4. Then, for all integers $\ell>1$ 
		\begin{align} \label{var:xi-term-pos}
		&E\left \{\max_{1\leq j \leq \tilde{T}} \left | \frac{1}{\tilde{T}h}\sum_{k=1}^{\tilde{T}}  \gamma_h(x_j,x_k)  \xi_k^+  \right|^2 \right\} =E \left \{ \max_{1\leq j \leq \tilde{T}} \left |\frac{1}{\tilde{T}h}\sum_{k=1}^{\tilde{T}} \gamma_h(x_j,x_k)    (\xi_k^+)^{\ell} (\xi_k^+)^{1-\ell} \right |^2 \right\}\nonumber\\
		& \leq R^{2-2\ell} E \left \{ \max_{1\leq j \leq \tilde{T}}  \frac{1}{\tilde{T}^2h^2}\sum_{k=1}^{\tilde{T}}  \gamma_h(x_j,x_k)^2 \cdot \sum_{k=1}^{\tilde{T}}   (\xi_k^+)^{2\ell} \right\}  \leq R^{2-2\ell}    E \left ( \frac{C_1}{\tilde{T}h} \sum_{k=1}^{\tilde{T}}   \xi_k^{2\ell} \right) \nonumber\\ 
		& \lesssim h^{-1} R^{2-2\ell}  (\log m)^{4\ell} \{m^{-2\ell} + (T^{-1}+T^{-1}p^{1-\beta}\log p)^{2\ell}\}.
		\end{align}
		
		Combining the error bounds (\ref{var:xi-term-neg}) and (\ref{var:xi-term-pos}) and choosing $R=m^{-1/2}$ gives
		\begin{align*} 
		&T_2\lesssim \frac{\log T}{mTh}+\frac{(\log m)^{4\ell}}{h m^{1-\ell}} \cdot
		\begin{cases}
		\frac{(\log p)^{2\ell}p^{2\ell(1-\beta)}}{T^{2\ell}} +m^{-2\ell},& 0<\beta\leq 1, \\
		T^{-2\ell}+m^{-2\ell},& 1<\beta.
		\end{cases}
		\end{align*} 
		
		By assumption $T=\lfloor p^{\upsilon}\rfloor$ and $m=\lfloor np^{(1-\upsilon)}\rfloor $ for some fixed $\upsilon\in(1-\min\{1,\beta\}/3,1)$.  If $\ell$ is an integer such that $\ell \geq 1/(1-\upsilon)$, then
		\begin{align*}
		m^{\ell-1} (\log m)^{4\ell} m^{-2\ell}\lesssim m^{-l}\lesssim n^{-\ell}p^{-(1-\upsilon)\ell} =\mathcal{O}\{(np)^{-1}\},
		\end{align*} where we used $\log x\leq x^a/a$ with $a=1/(4\ell)$. 
		Now, consider $0<\beta\leq 1$. Then, $n\leq cp^{\beta}$ by assumption. Let $0<\chi<1$ be a small constant. Using the inequality $\log x\leq x^a/a$ twice with $a=\chi/(2\ell)$ gives
		\begin{align}
		&m^{\ell-1} (\log m)^{4\ell} (\log p)^{2\ell}p^{2\ell(1-\beta)}T^{-2\ell}\lesssim n^{\ell-1+\chi}p^{(1-\upsilon)(\ell-1+\chi)-2\ell\upsilon+2\ell(1-\beta)+\chi} \nonumber\\
		&\lesssim p^ {\ell (3-3\upsilon-\beta) +(\beta+\chi+1)(1-v)+\chi}. \label{condk}
		\end{align}
		Note that for $\upsilon{\in}(1-\beta/3,1)$, the condition
		$\ell (3-3\upsilon-\beta) +(\beta+\chi+1)(1-v)+\chi  <-2$ is equivalent to $	\ell>\{-2-(\beta+\chi+1)(1-v)-\chi\}/(3-3\upsilon-\beta)$. Thus, for any fixed $\upsilon$  one can find an integer $\ell$ which is independent of $n,p$ such that the right side of (\ref {condk}) is of the order $\mathcal{O}(p^{-2}) $. For the same choice of $\ell$ it holds that  $m^{\ell-1} (\log m)^{4\ell} T^{-2\ell}= \mathcal{O}(p^{-2}).$
		
		In total, choosing an integer $\ell\geq \max[ 1/(1-\upsilon),\{-2-(\beta+\chi+1)(1-v)-\chi\}/(3-3\upsilon-\beta)]$ gives
		\begin{align} \label{var:finalbound}
		\mathbb{E} \|\widehat{H(f)}- \mathbb{E}\{\widehat{H(f)}\}\|_\infty^2= \mathcal{O}\left\{\frac{\log(np)}{nph} +h^{2\beta}\right \}. 
		\end{align}

		{\centering \textit{Upper bound on the bias}}
		
		Using the representation in Lemma~4 once more gives for each $x\in[0,1]$
		$$E\{\widehat{H(f)}(x)\}- H\{f(x)\}= \frac{1}{\tilde{T}h} \sum_{k=1}^{\tilde{T}} K_h(x,x_k) \left [H\{f(x_k)\} +\epsilon_k\right ] -  H\{f(x)\}.$$
		The bounds on $\epsilon_k$ imply
		\begin{align*} 
		\left |\frac{1}{\tilde{T}h} \sum_{k=1}^{\tilde{T}}K_h(x,x_k)\epsilon_k \right | &\lesssim \frac{1}{\tilde{T}h} \sum_{k=1}^{\tilde{T}} \gamma_h(x_j,x_k) |\epsilon_k|\lesssim (np)^{-1}+(np)^{-\beta}\log p.
		\end{align*}
		Consider the case that $\beta\geq 1$. In particular, $q=\gamma$ and $f^{(q)}$ is $\alpha$-H\"older continuous. Since $f$ is a periodic function with $f(x)\in[\delta, M_0]$ and $H(y)\propto \phi(m/2) + \log \left ( 2y/m\right )$, it follows that $\{H(f)\}^{(q)}$ is also $\alpha$-H\"older continuous.
		Extending $g=H(f)$ to the entire real line, we get 
		$$ \frac{1}{\tilde{T}h} \sum_{k=1}^{\tilde{T}} K_h(x,x_k)g(x_k)=  \int_{-\infty}^\infty h^{-1}\mathcal{K}_h(x,t)g(t) \, \mbox{d}t +\mathcal{O}(\tilde{T}^{-\beta})$$
		where  $\mathcal{K}_h$ is the extension of $K_h$ to the entire real line \citep[see][]{schwarz2016unified}.
		Expanding $g(t)$ in a Taylor series around $x$ and using that $h^{-1}\mathcal{K}_h$ is a kernel of order $2q$, see Lemma~2$(iii)$, it follows that for any $x\in[0,1]$ 
		\begin{align*}
		&\frac{1}{\tilde{T}h} \sum_{k=1}^{\tilde{T}} K_h(x,x_k)g(x_k)=g(x) + \int_{-\infty}^\infty h^{-1}\mathcal{K}_h(x,t) {(x-t)^{q} \frac{g^{(q)}(\xi_{x,t})}{q!}}\, \mbox{d}t\ +\mathcal{O}(\tilde{T}^{-\beta})\\
		&= g(x) +  \int_{-\infty}^\infty h^{-1}\mathcal{K}_h(x,t) {(x-t)^{q} \frac{g^{(q)}(\xi_{x,t}) - g^{(q)}(x)}{q!}}\, \mbox{d}t  + \mathcal{O}(\tilde{T}^{-\beta})\\
		&= g(x) + \sum_{l=-\infty}^\infty \int_{x+(l-1)h}^{x+lh} \mathcal{K}_h(x, t)(x-t) ^{q} \frac{g^{(q)}(\xi_{x,t})-g^{(q)}(x)}{hq!}\, \mbox{d}t+ \mathcal{O}(\tilde{T}^{-\beta}),
		\end{align*}
		where $\xi_{x,t}$ is a point between $x$ and $t$. Using the fact that the kernel $\mathcal{K}_h$ decays exponentially and that $g^{(q)}$ is $\alpha$-H\"older continuous on $[\delta,M_0]$ with some constant $L$, one obtains	
		\begin{align*}
		&\left | \frac{1}{\tilde{T}h} \sum_{k=1}^{\tilde{T}} K_h(x,x_k)g(x_k)-g(x) \right|
		{\leq}  \frac{CL}{q!} \sum_{l=-\infty}^\infty \int_{x+(l-1)h}^{x+lh}\gamma^\frac{|x-t|}{h} |x-t|^{q} \frac{|\xi_{x,t}-x|^{\alpha}}{hq!}\, \mbox{d}t{+} \mathcal{O}(\tilde{T}^{-\beta})\\
		&\leq  \frac{CL}{q!} \sum_{l=-\infty}^\infty \int_{x+(l-1)h}^{x+lh} \gamma^\frac{|x-t|}{h} \frac{|x-t|^{\beta} }{h}\, \mbox{d}t+ \mathcal{O}(\tilde{T}^{-\beta})
		\leq h^{\beta} \frac{CL}{q!} \sum_{l=-\infty}^\infty \gamma^{|l-1|} |l|^{\beta} + \mathcal{O}(\tilde{T}^{-\beta})\\
		&= \mathcal{O}(h^{\beta}) + \mathcal{O}(\tilde{T}^{-\beta}).
		\end{align*} 
		If $0<\beta\leq 1$, then $q=1$ and $g$ is $\beta$-H\"older continuous. Since $f(x)\in[\delta, M_0]$ and the logarithm is Lipschitz continuous on a compact interval, it follows $g=H(f)$ is $\beta$-H\"older continuous. Expanding $g$ to the entire line and using Lemma~2$(iii)$ with $m=0$ gives
		\begin{align*}
		\frac{1}{\tilde{T}h} \sum_{k=1}^{\tilde{T}} K_h(x,x_k)g(x_k)-g(x)=  \int_{-\infty}^\infty \mathcal{K}_h(x,t)\frac{g(t)-g(x)}{h} \, \mbox{d}t +\mathcal{O}(\tilde{T}^{-\beta}).
		\end{align*}
		In a similar way as before, one obtains
		$$ \left | \frac{1}{\tilde{T}h} \sum_{k=1}^{\tilde{T}} K_h(x,x_k)g(x_k)-g(x) \right| = \mathcal{O}(h^{\beta}) + \mathcal{O}(\tilde{T}^{-\beta}).$$
		Note that $\tilde{T}^{-\beta}=o(h^{\beta})$ as  $hT\to \infty$ and $h\to 0$ by assumption. 
		Since the derived bounds are uniform for $x\in[0,1]$ it holds
		\begin{align} \label{bias:finalbound} 
		\left \|E\{\widehat{H(f)}(x)\}-H\{f(x)\}\right\|_{\infty}=\mathcal{O}(h^{\beta}) +\mathcal{O}\{(np)^{-1}+(np)^{-\beta} \log p\}.
		\end{align}
		Putting the bounds (\ref{var:finalbound}) and (\ref{bias:finalbound}) together gives
		\begin{flalign*} 
		&& E [\|\widehat{H(f)} - H(f)\|_\infty^2 ]=\mathcal{O} \left\{ \log(np)/(nph)\right\} + \mathcal{O}(h^{2\beta}). && 
		\end{flalign*}	
	\end{proof}
	\subsubsection{An upper bound on $E  \|\hat{f} - f\|_\infty^2 $}	\label{A:boundf} 		
		\begin{prop}\label{prop:riskf}
			Let $\Sigma=\Sigma(f)$ with $f\in \mathcal{F}_{\beta}$ such that $\beta>0$.  If $h>0$ such that  $h\to 0$ and $hT\to \infty$,  then with $T=\lfloor p^{\upsilon}\rfloor$ for any $\upsilon\in(1-\min\{1,\beta\}/3,1)$, the estimator $\hat{f}$ described in Section 3 with $q=\max\{1,\gamma\}$ satisfies for $p\to \infty$ and $n$ such that $p^{\min\{1,\beta\}}/n\to c\in(0,\infty]$  	
			$$\ E \|\hat{f} - f\|_\infty^2 =\mathcal{O} \left\{ \log(np)/(nph)\right\} + \mathcal{O}(h^{2\beta}).$$
		\end{prop}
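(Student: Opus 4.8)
The plan is to transport the rate of Proposition~\ref{prop:riskHf} through the inverse transform $H^{-1}(z)=(m/2)\exp\{2^{1/2}z-\phi(m/2)\}$. First I would record the exact identity obtained by writing $\hat f=H^{-1}\{\widehat{H(f)}\}$ and $f=H^{-1}\{H(f)\}$ and using that $H^{-1}$ turns sums in the exponent into products: with $R:=\widehat{H(f)}-H(f)$,
\begin{equation*}
\hat f(x)=H^{-1}\{H(f(x))+R(x)\}=f(x)\,\exp\{2^{1/2}R(x)\}.
\end{equation*}
Equivalently this is the mean value theorem applied to $H^{-1}$, which satisfies $(H^{-1})'=2^{1/2}H^{-1}$. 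Combining $f\le M_0$ with the elementary bound $|e^u-1|\le|u|e^{|u|}$ then yields the pointwise estimate $|\hat f(x)-f(x)|\le 2^{1/2}M_0\,|R(x)|\,e^{2^{1/2}|R(x)|}$, hence
\begin{equation*}
\|\hat f-f\|_\infty^2\le 2M_0^2\,\|R\|_\infty^2\,e^{2^{3/2}\|R\|_\infty}.
\end{equation*}

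Next I would take expectations and split the product with the Cauchy--Schwarz inequality,
\begin{equation*}
E\|\hat f-f\|_\infty^2\le 2M_0^2\big(E\|R\|_\infty^4\big)^{1/2}\big(E\,e^{2^{5/2}\|R\|_\infty}\big)^{1/2},
\end{equation*}
so the proof reduces to two estimates: (a) the fourth moment bound $E\|R\|_\infty^4=\mathcal O\big(\{\log(np)/(nph)+h^{2\beta}\}^2\big)$, and (b) the exponential moment bound $E\,e^{c\|R\|_\infty}=\mathcal O(1)$ for $c=2^{5/2}$. Substituting (a) and (b) into the last display gives exactly $E\|\hat f-f\|_\infty^2=\mathcal O\{\log(np)/(nph)+h^{2\beta}\}$, which is the claim. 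For (a) I would rerun the bias--variance argument of Proposition~\ref{prop:riskHf} with fourth powers in place of squares: the bias $\|E\widehat{H(f)}-H(f)\|_\infty$ is deterministic of order $h^\beta$, and for the stochastic part $\widehat{H(f)}-E\widehat{H(f)}=(\tilde{T}h)^{-1}\sum_k K_h(\cdot,x_k)(\zeta_k+\xi_k)$ I would reduce the supremum to the maximum over the $\tilde{T}=2T-2$ design points as before and then use a fourth-power version of the sub-Gaussian maximal inequality of \citet[][Lemma~1.6]{tsybakov2009introduction} for the Gaussian part $\zeta_k$ and the truncation-at-$m^{-1/2}$ device together with the moment bounds $E|\xi_k|^\ell\lesssim(\log m)^{2\ell}\{m^{-\ell}+(T^{-1}+T^{-1}p^{1-\beta}\log p)^\ell\}$ for the $\xi_k$ part; since these maximal inequalities control all moments of the maximum, this produces $E\|R\|_\infty^4\lesssim\{\log(np)/(nph)+h^{2\beta}\}^2$.

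For (b) I would again split $R$ into the deterministic bias (order $h^\beta=o(1)$), the Gaussian part, and the $\xi$-part. After the reduction to a maximum over $\tilde{T}$ points, the Gaussian part is a maximum of jointly Gaussian variables with common variance proxy $\sigma_n^2=\mathcal O\{(nph)^{-1}+p^{-2}+p^{-2\beta}(\log p)^2\}$; since $ph\ge Th\to\infty$ forces $nph\to\infty$ at a polynomial rate, one has $\sigma_n^2\log\tilde{T}\to0$, and Gaussian concentration gives $E\exp(c\max_j|\cdot|)\le\exp\{c\,E\max_j|\cdot|+c^2\sigma_n^2/2\}=\mathcal O(1)$. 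The $\xi$-part has polynomially small scale, because $T^{-1}p^{1-\beta}\log p=o(1)$ for every admissible $\upsilon\in(1-\min\{1,\beta\}/3,1)$, so combining the truncation device with the same moment bounds shows its exponential moment is $\mathcal O(1)$ as well, and (b) follows.

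I expect step (b), the exponential-moment control of $\|\widehat{H(f)}-H(f)\|_\infty$, to be the main obstacle: because $H^{-1}$ is exponential, the mere $L^2$ rate of Proposition~\ref{prop:riskHf} does not by itself carry over to $\hat f$, so one genuinely needs a stronger tail bound. The delicate ingredient is the $\xi_k$-part, for which only polynomial moment bounds (not sub-Gaussian ones) are available, so one must carefully exploit the polynomial smallness of its scale, together with the truncation argument and the moment estimates already used in Proposition~\ref{prop:riskHf}, to exclude heavy tails.
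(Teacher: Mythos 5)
Your overall skeleton is close to the paper's: both start from the mean-value identity $\hat f=H^{-1}\{H(f)+R\}$ with $R=\widehat{H(f)}-H(f)$ and then isolate the exponential factor $(H^{-1})'\propto H^{-1}$ so that the problem reduces to controlling the tail of $\|R\|_\infty$ (or of $\|\widehat{H(f)}\|_\infty$). However, the precise splitting and, more importantly, the technical lemma used to control that tail are genuinely different, and as written your route has a gap in step (b).

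The paper splits on the event $\{\|H^{-1}(g)\|_\infty\le C\}$: on this event one gets $C^2E\|R\|_\infty^2$ and can cite Proposition~1 directly, while on the complement one uses Cauchy--Schwarz to produce $(E\|\hat f-f\|_\infty^4)^{1/2}\operatorname{pr}(\|H^{-1}(g)\|_\infty>C)^{1/2}$. Crucially, the fourth moment there is allowed to be \emph{crude} --- the paper only shows $E\|\hat f-f\|_\infty^4=\mathcal O(T)$ --- because the probability factor is shown to be geometrically small, $\mathcal O(TL^{-m})$, which overwhelms any polynomial-in-$T$ factor. Both estimates flow from a single MGF computation for $\|\widehat{H(f)}\|_\infty$ that never touches the $\zeta_k/\xi_k$ decomposition: after reducing the supremum to a maximum over design points and applying Jensen, the paper bounds $E\exp\{\lambda M_j|Y_k^*|\}$ by sandwiching $Q_k$ between two gamma variables and computing the moment generating function of $|\log X|$ for $X\sim\Gamma(a,b)$ explicitly, display~(A.11). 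This gives $E\exp\{\lambda\|\widehat{H(f)}\|_\infty\}=\mathcal O(T)$ for fixed $\lambda$ and $\mathcal O(TL^m)$ for $\lambda\asymp m$, which is exactly what Markov's inequality needs.

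Your alternative splitting, $E\|\hat f-f\|_\infty^2\lesssim(E\|R\|_\infty^4)^{1/2}(E e^{c\|R\|_\infty})^{1/2}$, forces both factors to be \emph{sharp}: (a) $E\|R\|_\infty^4=\mathcal O(\{\log(np)/(nph)+h^{2\beta}\}^2)$, and (b) $E e^{c\|R\|_\infty}=\mathcal O(1)$. Point (a) is extra work (a fourth-moment version of the whole Proposition~1 argument) that the paper avoids, but it is plausibly doable via Borell--TIS for the Gaussian part and the moment bounds for $\xi_k$. The real gap is (b). Your plan is to control the $\xi$-contribution via truncation plus the polynomial moment bounds $E|\xi_k|^\ell\lesssim(\log m)^{2\ell}\{m^{-\ell}+(T^{-1}+T^{-1}p^{1-\beta}\log p)^\ell\}$. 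But the paper explicitly states (in the proof of Lemma~\ref{lemma:representY}) that the hidden constant in these bounds \emph{depends on $\ell$}, and no growth rate in $\ell$ is recorded. Polynomial moment bounds with unquantified $\ell$-dependent constants do not control an exponential moment: if the constants grow like $\ell!$ or faster, $E e^{c|\xi_k|/s_n}$ could diverge even when the scale $s_n$ is polynomially small. So the argument you sketch for (b) does not close. The paper's argument is designed precisely to sidestep this: instead of splitting $Y_k^*$ into $\zeta_k+\xi_k$ and estimating each exponential moment, it works directly with $|Y_k^*|$ and the explicit gamma MGF, for which all constants are pinned down. If you want to keep your decomposition, you would need to go back to Lemma~\ref{lemma:representY} (or to the Brown--Cai--Zhou source) and establish a genuine sub-exponential tail for $\xi_k$ with explicit constants, rather than relying on the $\ell$-by-$\ell$ moment bounds; alternatively you could emulate the paper and derive the MGF of $\|\widehat{H(f)}\|_\infty$ from the gamma structure of $Q_k$ and then be content with the weaker $\mathcal O(T)$ bound, compensating with the geometric decay of the tail probability.
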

	\begin{proof}
	By the mean value theorem, it holds for some function  $g$ between $\widehat{H(f)}$ and $H(f)$ that
	\begin{align*}
	E  \|\hat{f} - f\|_\infty^2 &= E\|H^{-1} \{\widehat{H(f)}\} - H^{-1} \{H(f)\}   \|_\infty^2  = E  \|(H^{-1})^\prime (g) ( \widehat{H(f)}-H(f) ) \|_\infty^2 \\
	&\lesssim E\|  H^{-1}(g) \{ \widehat{H(f)}-H(f) \}\|_\infty^2.
	\end{align*} Application of the Cauchy-Schwarz inequality with a constant $C>0$ yields
	\begin{equation} \label{CSsplit}
	E\|\hat{f}-f \|_\infty^2\leq  C^2 E \|\widehat{H(f)}-H(f)\|_\infty^2
	+ (E\|\hat{f}-f\|_\infty^4)^{1/2}\cdot \mbox{pr} (\|H^{-1}(g)\|_\infty > C) ^{1/2}.
	\end{equation}
	To show that the second term on the right-hand side of (\ref{CSsplit}) is negligible, we use  Markov's inequality. In order to do so, we first derive the asymptotic order of the moment generating function of $\widehat{H(f)}$ for $n,p\to \infty$. \\ 
	
	{\centering \textit{Moment generating function of $\|\widehat{H(f)} \|_\infty$} }
	
	By the exponential decay property of the kernel $K$ stated in Lemma~\ref{lemma:decay}, it holds almost surely that	
	\begin{align}
	\|\widehat{H(f)} \|_\infty &\lesssim \sup_{x\in[0,1]}  \frac{1}{\tilde{T}h} \sum_{k=1}^{\tilde{T}} \gamma_h(x,x_k)\cdot \left | Y^*_k\right |, \nonumber
	\end{align} where $\gamma_h(x,t)=\gamma^{|x-t|/h} + \gamma^{1/h} \{\gamma^{(x-t)/h}+ \gamma^{(t-x)/h}\}/(1-\gamma^{1/h})$ for some constant $\gamma\in (0,1)$. 
	First,  $\|\widehat{H(f)} \|_\infty$ is bounded with the maximum over a finite number of points.
	Calculating the derivative of $s:[0,1]\to \mathbb{R},\, x\mapsto s(x)=\sum_{k=1}^{\tilde{T}} \gamma_h(x,x_k) \left | Y^*_k\right |$ for $x\neq x_k$ gives almost surely
	\begin{align*}
	&\frac{\partial}{\partial x}  s(x) =  h^{-1} \log \gamma \sum_{k=1}^{\tilde{T}}  \left [\gamma^{|x-x_k|/h}|x-x_k| + \frac{\gamma^{1/h}}{1-\gamma^{1/h}}\left \{\gamma^{(x-x_k)/h}+ \gamma^{(x_k-x)/h} \right\} \right ] \left |Y^*_k\right|.
	\end{align*}
	Since $\partial s(x)/\partial x >0$ almost surely for $x\neq x_k$, the extrema occur at $x_k \, (k=1,...,\tilde{T}).$
	Thus, for $\lambda >0,$ the moment generating function of $\| \widehat{H(f)} \|_\infty $ is bounded, up to constants, by
	\begin{align*} 
	&E [\exp\{\lambda \| \widehat{H(f)} \|_\infty  \} ]  \lesssim \sum_{j=1}^{\tilde{T}}E\left [ \exp \left\{\frac{\lambda}{\tilde{T}h}\sum_{k=1}^{\tilde{T}} \gamma_h(x_j,x_k)\left | Y_k^* \right | \right\}\right ].
	\end{align*}
	Let $M_j=(\tilde{T}h)^{-1}\sum_{k=1}^{\tilde{T}} \gamma_h(x_j,x_k)$, which by Lemma~\ref{lemma:decay} is bounded uniformly in $j$ by some global constant $M>0$. 
	By the convexity of the exponential function we obtain
	\begin{align*} 
	&E\left [  \exp \left \{\frac{\lambda}{\tilde{T}h}\sum_{k=1}^{\tilde{T}} \gamma_h(x_j,x_k)| Y_k^* | \right \}\right ] \leq \frac{1}{M_j\tilde{T}h} \sum_{k=1}^{\tilde{T}} \gamma_h(x_j,x_k) E\left [\exp \left\{ \lambda M_j|Y_k^*| \right\} \right ].
	\end{align*}	
	Recall that $Y_k^*=2^{-1/2}\log(Q_k/m)$ and  by assumption $0\leq \delta \leq f \leq M_0$. Using Lemma~\ref{lemma:rewriteBins}, $Q_k$ can be written as a sum of $m=np/T$ independent gamma random variables, i.e., $Q_k= \sum_{i=1}^{m} A_i$ where $A_i\sim \Gamma(1/2, 2a_i)$ with $a_i\in(c_1\delta,c_2M_0)$ for some constants $c_1,\,c_2>0$. Thus, we can find some (fully correlated) random variables $Q_k^-\sim \Gamma(m/2,\tilde{c}_1\delta)$ and $Q_k^+\sim \Gamma(m/2,\tilde{c}_2M_0)$ such that $Q_k^-\leq Q_k\leq Q_k^+$ almost surely and $\tilde{c}_1,\tilde{c}_2>0$ are some constants. 
	Then, 
	\begin{align*}
	E [\exp \{ \lambda M_j |Y_k^*|\} ]&\leq  E\left [  \exp \left \{ \lambda M_j2^{-1/2} \max \left\{ | \log ( Q_k^+/m )|,  | \log (Q^-_k/m)| \right\}   \right\} \right ].
	\end{align*} 
	The moment generating function of $|\log X|$ with $X\sim\Gamma(a,b)$ is   
	\begin{align*}
	E\left \{  \exp ( t \left |\log X \right | ) \right \} &=b^{-t}\frac{  \Gamma(a-t) - \gamma (a - t, 1/b) + b^{2t} \gamma(a+t, 1/b)}{\Gamma(a)}, \quad |t|<a,
	\end{align*}
	where $\Gamma(a)$ is the gamma function and $\gamma(a,b)$ is the lower incomplete gamma function. 
	In particular, 
	\begin{equation} \label{MGFbound}
	E\{  \exp ( t  |\log X | )\}\leq\frac{  b^{-t} \Gamma(a-t)+ b^{t} \Gamma(a+t)}{\Gamma(a)}.
	\end{equation}  for $0<t<a.$
	In our setting, $X=Q_k^-/m\sim \Gamma(m/2, \tilde{c}_1\delta/m)$ or $X=Q_k^+/m\sim \Gamma(m/2, \tilde{c}_2M_0/m)$, respectively. To derive the asymptotic order of  $E [ \exp\{ \lambda \|\widehat{H(f)} \|_\infty \}]$ for $p/n\to c\in(0, \infty]$,  where  $\lambda>0$, we first establish the asymptotic order of the ratio $\Gamma(a + t)/\Gamma(a)$ for $a\to \infty$. 
	We distinguish the two cases where $t$ is independent of $a$ and where $t$ linearly depends on $a$. Based on Stirling`s approximation, one derives for $a>1$ and $ t>0$ 
	\begin{align}
	\frac{\Gamma(a+ t)}{\Gamma(a)}&=\frac{ \{2\pi (a+ t)^{-1}\}^{1/2}(a+ t)^{a+ t}\exp{(-a-t)} \left [ 1+ \mathcal{O}\{(a+ t)^{-1}\} \right] }{ (2\pi/
		a)^{1/2} a^{a}\exp{(-a)} \left \{ 1+ \mathcal{O}(a^{-1}) \right\} } \nonumber \\
	&=\{a (a+ t)^{-1}\}^{1/2} \left \{(a+ t)a^{-1}\right\}^{a} (a+ t)^{t} \exp{(- t)} \left [ \frac{ 1+ \mathcal{O}\{(a+ t)^{-1}\}}{  1+ \mathcal{O}(a^{-1})}  \right]. \label{gammaratio}
	\end{align}
	Thus, for $0<t<a$ and $t$ independent of $a$, Equation (\ref{gammaratio}) implies for $a\to \infty$ that  $\Gamma(a+ t)/\Gamma(a) =\mathcal{O}(a^{t})$. Similarly, it can be seen that  $\Gamma(a-t)/\Gamma(a) =\mathcal{O}(a^{-t})$. 
	If $0<t<a$ and $t$ linearly depends on $a$, i.e., $t=ca$ for some constant $c\in (0,1)$, then we get $\Gamma(a\pm t)/\Gamma(a) =\mathcal{O}\{a^{\pm t}  \exp(a) \}$ for $a\to \infty$.	
	Hence, for a fixed $\lambda$ not depending on $n,p$  and such that $0<\lambda<m2^{-1/2}M_j^{-1}$ we get for sufficiently large $n,p$
	\begin{align*}
	E\left [\exp \left\{\lambda M_j|Y_k^*|\right \}\right] &=E\left [\exp \left\{\lambda M_j2^{-1/2} \left| \log \left (Q_k/m\right ) \right| \right\} \right ] \\
	&\leq c_3\sum_{b\in\{\tilde{c}_1\delta/m,\tilde{c}_2M_0/m\}}(bm/2)^{-\lambda M_j2^{-1/2} }  +(bm/2)^{\lambda M_j2^{-1/2}}=\mathcal{O}(1).
	\end{align*} 
	If $\lambda=cm$ such that $0<\lambda<m2^{-1/2}M_j^{-1}$, then for sufficiently large  $n,p$
	\begin{align*}
	E\left [\exp \left\{\lambda M_j|Y_k^*|\right \}\right]&\leq c_4\exp(m/2){\sum_{b\in\{\tilde{c}_1\delta/m,\tilde{c}_2M_0/m\}}}(bm/2)^{-\lambda M_j2^{-1/2} } +(bm/2)^{\lambda M_j2^{-1/2}}\\
	& =\mathcal{O}(L^m),
	\end{align*} for some constant  $L>\exp(1/2)$.
	Set $K=\min_{j=1,...,\tilde{T}} 2^{-1/2} M_j^{-1}$ which is a constant independent of $n,p$. Altogether, we showed that for $0<\lambda <Km$ and $n,p\to \infty$
	\begin{align}
	E [ \exp\{ \lambda \|\widehat{H(f)} \|_\infty \}]=\mathcal{O}(T), & & & \text{if } \lambda \text{ does not depend on $n,p$,} \label{MGF-o1}\\
	E [ \exp\{ \lambda \|\widehat{H(f)} \|_\infty \}]=\mathcal{O}(TL^{m}), & & &\text{if }  \lambda=cm \text{ for } c\in(0,K)  \label{MGF-o2}.
	\end{align}
	
	{\centering \textit{Bounding the right hand side of (\ref{CSsplit})}}
	
	Noting that $\|f\|_\infty\leq M_0$ and $m/2 \exp \left \{\phi\left (-m/2\right) \right\}\in[1,4]$ for $m\geq1$, (\ref{MGF-o1}) implies for some constants $c_0,c_1>0$ and $n,p \to \infty$
	\begin{align*}
	E \|\widehat{f} - f\|_\infty^4 &\leq c_0E \|\widehat{f}\|_\infty^4  +c_1 =  c_0E \|  (m/2)\exp \{ 2^{1/2}  \widehat{H(f)}-\phi(m/2)  \}  \|_\infty^4 +c_1 \\
	&\leq c_0\left |(m/2)^4 \exp \{ -4\phi(m/2)\}\right | \cdot E[ \exp \{4\cdot 2^{1/2}   \|\widehat{H(f)} \|_\infty \}]+c_1  =\mathcal{O}(T).
	\end{align*} 
	Since $g$ lies between $\widehat{H(f)}$ and $H(f)$, and $H^{-1}\propto \exp$,  it follows  $H^{-1}(g)\leq H^{-1}\{\widehat{H(f)}\}+f$ almost surely pointwise. Thus, for $C>\|f\|_\infty=M_0$, it holds
	\begin{align*} 
	\mbox{pr}  \left\{ \|H^{-1}(g) \|_\infty > C \right\} \leq \mbox{pr} \left [ \|H^{-1}\{\widehat{H(f)}\}\|_\infty >C-M_0 \right]\leq \mbox{pr} \left[\|\widehat{H(f)}\|_\infty>c_1 \right],
	\end{align*} where $c_1=H(C-M_0)$.
	Applying  Markov's inequality for $t=cm$ with $c\in(0,K)$ and $C=2L^{4/c}+M_0$, where $c,K,L$ are the constants in (\ref{MGF-o2}), gives 
	\begin{align*} 
	\mbox{pr}  \{ \|H^{-1}(g) \|_\infty > C  \} &\leq  \exp(-c_1t)  E  [ \exp\{ t \|\widehat{H(f)}\|_\infty \}  ]=\mathcal{O}(TL^{-m}).
	\end{align*}  Thus, the right-hand side of (\ref{CSsplit}) is asymptotically negligible to the first term in (\ref{CSsplit}).
	Together with Proposition~\ref{prop:riskHf} it follows $$E  \|\hat{f} - f\|_\infty^2 =\mathcal{O} \left\{ \log(np)/(nph)\right\} + \mathcal{O}(h^{2\beta}).$$ 
	\end{proof}

	\subsubsection{An upper bound on $E\|\widehat{\Sigma}-\Sigma\|^2$}	
	Using the fact that the spectral norm of a Toeplitz matrix is upper bounded by the supremum norm of its spectral density, and that Proposition~\ref{prop:riskf} holds for every $ f\in \mathcal{F}_{\beta}$, we get
	\begin{equation} \label{theo:sigmaf} 
	\sup _{f\in \mathcal{F}_{\beta}}\,	E_f \|\widehat{\Sigma}- \Sigma(f)\|^2 \leq \sup _{f\in\mathcal{F}_{\beta}}\, E_f \|\hat{f} - f\|_\infty^2= 	\mathcal{O} \left\{ \log(np)/(nph)\right\} +\mathcal{O}(h^{2\beta}).
	\end{equation} 	
	
 \subsubsection{An upper bound on $E\|\hat{\Omega}-\Sigma^{-1}\|^2$}
	According to the mean value theorem, for a function  $g$ between $\widehat{H(f)}$ and $H(f)$, it holds that
\begin{align*}
E \|\hat{{\Omega}} - {\Omega}\|^2 &\leq E  \|1/\hat{f}- 1/f\|_\infty^2= E \|1/H^{-1} \{\widehat{H(f)}\} - 1/H^{-1} \{H(f)\}\|_\infty^2\\
&= E \|(1/H^{-1})^\prime (g) \{ \widehat{H(f)}-H(f) \}\|_\infty^2 \lesssim E  \| 1/H^{-1}(g) \{ \widehat{H(f)}-H(f)\} \|_\infty^2.
\end{align*} 
Application of the Cauchy-Schwarz inequality with a constant $C>0$ yields
\begin{equation*} 
E \|1/\hat{f}- 1/f\|_\infty^2\leq  C^2 E \|\widehat{H(f)}-H(f)\|_\infty^2 
+ (E \|1/\hat{f}- 1/f\|_\infty^4)^{1/2} \mbox{pr}  \left [ \|1/\{H^{-1}(g)\}\|_\infty > C \right ] ^{1/2}.
\end{equation*} 
Note that $\|1/H^{-1}(g) \|_\infty  \leq  2/m \exp ( 2^{1/2} \|g \|_\infty)\exp\{\phi (m/2)\}\leq c_1\|H^{-1}(g) \|_\infty$ for some constant $c_1>0$ not depending on $n,p$.
Choosing the same constant $C$ as in the proof of Proposition 2 it follows
\begin{align*}
\mbox{pr} \left [ \left\|1/\{H^{-1}(g)\}\right \|_\infty > {C} \right ]  = \mathcal{O}(T L^{-m}).
\end{align*}
Noting that $\|1/f\|_\infty\leq 1/\delta$ and $2/m \exp \left \{ \phi(m/2)\right \} \in[0.25,1]$ for $m\geq1$, (\ref{MGF-o1}) implies for some constants $c_2,c_3>0$ and $n,p \to \infty$
\begin{align*}
E \|1/\hat{f}- 1/f\|_\infty^4
&\leq c_2|(2/m)^4 \exp \{ 4\phi(m/2)\} | \cdot E   \| \exp  \{-4 2^{1/2} \widehat{H(f)}  \}  \|_\infty  +c_3 \\
&\leq  c_2|(2/m)^4 \exp \{ 4\phi(m/2)\} | \cdot E   \exp  \{4 2^{1/2}\| \widehat{H(f)}\|_\infty  \}    +c_3 =\mathcal{O}(T). 
\end{align*} 
Since the derived bounds hold for each $\Sigma(f)$ with $f\in\mathcal{F}_\beta$, we get all together 
\begin{equation} \label{theo:omega}
\sup _{ f\in\mathcal{F}_{\beta}}\,	E_f  [\|\widehat{\Omega} - {\Sigma}^{-1}(f)\|^2  ]=\mathcal{O} \left\{ \log(np)/(nph))\right\} + \mathcal{O}(h^{2\beta}).
\end{equation}

	\subsubsection{Optimal bandwith parameter $h$}

Minimizing the right side of (\ref{theo:sigmaf}) for the bandwidth parameter $h$ yields $h\asymp\left\{\log(np)/(np)\right \}^{\frac{1}{2\beta+1}}$.  In particular, $h\to 0$. Furthermore, $$hT\geq c p^{1-\min\{\beta,1\}/3-(1+\min\{1,\beta\})/(2\beta+1)}  \{\log(np)\} ^{1/(2\beta+1)}\to \infty $$ since by assumption $n\lesssim p^{\min\{1,\beta\}}$ and $T=\lfloor p^\upsilon \rfloor$ with $\upsilon\in(1-\min\{1,\beta\}/3,1)$. 
Thus, substituting $h\asymp\left\{\log(np)/(np)\right \}^{1/(2\beta+1)}$ into (\ref{theo:sigmaf}) gives the second result.\hfill \qed

	\section{Proofs of Auxiliary Lemmas for Theorem~1} \label{app:auxlemmas}
	This section states three technical lemmas needed for the proof of Theorem~\ref{theorem1}. The first lemma lists some properties of the kernel $K_h$ and its extension $\mathcal{K}_h$ on the real line. The proof is based on \citet{schwarz2016unified} and \cite{schwarzdiss}.
	\begin{lemma} \label{lemma:decay} Let $h>0$ be the bandwith parameter depending on $N$.
		\begin{enumerate}[label=(\roman*)]
			\item There are constants $0<C<\infty$ and $0<\gamma <1$ such that for all  $x,t\in [0,1]$
			\begin{align*}
			&|\mathcal{K}_h(x,t)|< C\gamma^{|x-t|/h},\\ 
			&|K_h(x,t)|< C  [\gamma^{|x-t|/h} + \gamma^{1/h}\{\gamma^{(x-t)/h}+ \gamma^{(t-x)/h} \} /(1-\gamma^{1/h})].
			\end{align*}
			\item  Let $t_i=(i-1)/(N-1),\,i=1,...,N$. If $Nh\to\infty$ for $N\to\infty$, then 
			$$\frac{1}{Nh}\sum_{i=1}^N \gamma_h(x,t_i)=\mathcal{O}(1), \quad \frac{1}{Nh}\sum_{i=1}^N \gamma^2_h(x,t_i) = \mathcal{O}(1)$$  uniformly for $x\in[0,1]$, where $\gamma_h(x)= \gamma^{|x-t|/h} + \gamma^{1/h}\{\gamma^{(x-t)/h}+ \gamma^{(t-x)/h} \} /(1-\gamma^{1/h})$.
			\item It holds
			\begin{align*}
			&h^{-1}\int_{-\infty}^\infty (x-t)^m \mathcal{K}_h(x,t)\, \text{d}t = \delta_{m,0},& &\text{ for } m=0,...,2q-1,\\
			&h^{-1}\int_{-\infty}^\infty (x-t)^m \mathcal{K}_h(x,t)\, \text{d}t\neq0, &&\text{ for } m=2q.			
			\end{align*}
		\end{enumerate}
	\end{lemma}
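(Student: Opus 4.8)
The plan is to establish the three parts in the order (i), (ii), (iii), since (ii) follows quickly from (i) while (iii) is essentially separate. The only deep ingredient is the exponential decay of the equivalent kernel of a smoothing spline on the real line, which I would import verbatim from \citet{schwarz2016unified} and \citet{schwarzdiss}: there are constants $C\in(0,\infty)$ and $\gamma\in(0,1)$ with $|\mathcal{K}_h(x,t)|\le C\gamma^{|x-t|/h}$ for all $x,t\in\mathbb{R}$, which is the first bound in (i). For the second bound I would combine this with the periodization identity $W(x,t)=\sum_{l\in\mathbb{Z}}\mathcal{W}(x,t+l)$ \citep[see Lemma~14 in][]{schwarzdiss}, which rescales to $K_h(x,t)=\sum_{l\in\mathbb{Z}}\mathcal{K}_h(x,t+l)$ because $K_h=hW$ and $\mathcal{K}_h=h\mathcal{W}$. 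Hence $|K_h(x,t)|\le C\sum_{l\in\mathbb{Z}}\gamma^{|x-t-l|/h}$; for $x,t\in[0,1]$ one has $x-t\in[-1,1]$, so $|x-t-l|=l-(x-t)$ when $l\ge1$ and $|x-t-l|=(x-t)-l$ when $l\le-1$, and summing the two geometric tails together with the $l=0$ term gives exactly $\gamma^{|x-t|/h}+\gamma^{1/h}\{\gamma^{(x-t)/h}+\gamma^{(t-x)/h}\}/(1-\gamma^{1/h})=\gamma_h(x,t)$.

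The identity just used shows that $\gamma_h(x,\cdot)$ is the $1$-periodization of $u\mapsto\gamma^{|u|/h}$; in particular, on $[0,1]$ it is a sum of three monotone exponential pieces whose total variations are at most $2$, $\gamma^{x/h}\le1$, and $\gamma^{(1-x)/h}\le1$, so $\mathrm{Var}_{[0,1]}\gamma_h(x,\cdot)\le4$ uniformly in $x$ and $h$. For part (ii) I would then apply the elementary bounded-variation Riemann-sum bound, $|N^{-1}\sum_{i=1}^N\gamma_h(x,t_i)-\int_0^1\gamma_h(x,t)\,\mathrm{d}t|\lesssim N^{-1}$ uniformly in $x$, together with the integral estimates $h^{-1}\int_0^1\gamma^{|x-t|/h}\,\mathrm{d}t\le h^{-1}\int_{-\infty}^{\infty}\gamma^{|u|/h}\,\mathrm{d}u=2/|\log\gamma|$ and $h^{-1}\gamma^{1/h}(1-\gamma^{1/h})^{-1}\int_0^1\gamma^{\pm(x-t)/h}\,\mathrm{d}t\le1/|\log\gamma|$, where the geometric prefactor cancels the $(1-\gamma^{1/h})^{-1}$ blow-up; these give $h^{-1}\int_0^1\gamma_h(x,t)\,\mathrm{d}t=\mathcal{O}(1)$ uniformly in $x$, hence $(Nh)^{-1}\sum_i\gamma_h(x,t_i)=\mathcal{O}(1)+\mathcal{O}((Nh)^{-1})=\mathcal{O}(1)$ since $Nh\to\infty$. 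Running the same argument with $\gamma$ replaced by $\gamma^2$ and using $(a+b+c)^2\le3(a^2+b^2+c^2)$ yields the bound for $\gamma_h^2$.

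For part (iii) I would use that on $\mathbb{R}$ the smoothing-spline operator with penalty $h^{2q}\int(s^{(q)})^2$ has a translation-invariant equivalent kernel: by \citet{schwarz2016unified}, via the Schoenberg exponential-spline representation recalled in Appendix~\ref{app:SSper}, one may write $h^{-1}\mathcal{K}_h(x,t)=\psi_h(x-t)$ with Fourier transform $\widehat{\psi_h}(\omega)=\{1+(c\,h\,\omega)^{2q}\}^{-1}$ for a constant $c>0$. Expanding $\{1+(c\,h\,\omega)^{2q}\}^{-1}=1-(c\,h\,\omega)^{2q}+O(\omega^{4q})$ around $\omega=0$ gives $\widehat{\psi_h}(0)=1$, $\widehat{\psi_h}^{(m)}(0)=0$ for $1\le m\le2q-1$, and $\widehat{\psi_h}^{(2q)}(0)\ne0$. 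Since the $m$-th moment of $\psi_h$ equals $\mathtt{i}^{m}\widehat{\psi_h}^{(m)}(0)$, the substitution $u=x-t$ yields $h^{-1}\int_{-\infty}^{\infty}(x-t)^m\mathcal{K}_h(x,t)\,\mathrm{d}t=\int_{-\infty}^{\infty}u^m\psi_h(u)\,\mathrm{d}u=\delta_{m,0}$ for $0\le m\le2q-1$ and a nonzero value for $m=2q$, the independence of $x$ being a consequence of translation invariance on $\mathbb{R}$.

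I expect the main obstacle to be not any single computation but keeping every constant in (i) and (ii) uniform in $x\in[0,1]$ and in $h$, since these bounds are applied inside suprema over $x$ in the proof of Theorem~\ref{theorem1}; the enabling observation is that the periodic correction terms in $\gamma_h$ have $x$- and $h$-uniform total variation and contribute only $\mathcal{O}(1)$ to the rescaled integral once the $(1-\gamma^{1/h})^{-1}$ factors cancel. The exponential-decay bound behind (i) is the deepest fact, but it is available off the shelf from \citet{schwarz2016unified} and \citet{schwarzdiss} and need not be reproved here.
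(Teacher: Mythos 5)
Parts (i) and (ii) of your argument match the paper's approach closely: (i) is imported from \citet{schwarzdiss} and then periodized via $K_h(x,t)=\sum_{l\in\mathbb{Z}}\mathcal{K}_h(x,t+l)$, and (ii) compares the Riemann sum to $h^{-1}\int_0^1\gamma_h(x,t)\,\mathrm{d}t$ and bounds the integral; your bounded-variation bound $\mathrm{Var}_{[0,1]}\gamma_h(x,\cdot)\le 4$ is a small variation on the paper's appeal to piecewise differentiability, and both give the same $\mathcal{O}((Nh)^{-1})$ discretization error and the same cancellation of the $(1-\gamma^{1/h})^{-1}$ factor in the integral estimates.

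Part (iii) is where you depart from the paper, and there is a genuine gap. You write $h^{-1}\mathcal{K}_h(x,t)=\psi_h(x-t)$ with $\widehat{\psi_h}(\omega)=\{1+(c\,h\,\omega)^{2q}\}^{-1}$ and then read off the moments from the Taylor expansion of $\widehat{\psi_h}$ at zero. That Fourier transform is the idealized Silverman kernel for the continuous-design penalized regression; it is not the transform of $\mathcal{K}_h$ as defined in Appendix~\ref{app:SSper}, which is the equivalent kernel of a smoothing spline on an infinite equidistant lattice and whose Fourier representation has denominator $Q_{2q-2}(u)+h^{2q}(2\pi u)^{2q}\,\mathrm{sinc}(\pi u)^{2q}$ with $Q_{2q-2}$ a periodized power of sinc, multiplied by exponential-spline factors in the numerator. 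The paper does not derive the moment identity from a closed-form transform; it cites Lemma~15 of \citet{schwarzdiss} with $p=2q-1$, which establishes the polynomial-reproduction property for this specific kernel. Your heuristic captures the right intuition (near $\omega=0$ the denominator does behave like $1+c(h\omega)^{2q}$, since $Q_{2q-2}(0)=1$ and $\mathrm{sinc}(0)=1$), but to turn it into a proof you would need to verify that all derivatives of the actual transform up to order $2q-1$ vanish at the origin — i.e., that the $u^j$ coefficients of $Q_{2q-2}(u)$ and of the exponential-spline numerator do not contribute low-order terms — which is exactly the nontrivial content of the cited lemma. As written, the claimed form of $\widehat{\psi_h}$ is an overstatement rather than a citation.
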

	\begin{proof}	 
		$(i)$ See Lemma~16  of \citet{schwarzdiss} for the proof of the first statement. Furthermore, for $x,t\in [0,1]$ holds
		\begin{align*}
		\left |{K}_h(x,t)\right |&\leq\sum_{l=-\infty} ^\infty \left |\mathcal{K}_h (x, t +l) \right |\\
		&\leq C \sum_{l=-\infty} ^\infty \gamma^{|x-t-l|/h}=C\left \{ \gamma^{|x-t|/h} + \sum_{l>0} ^\infty \gamma^{(x-t+l)/h} +\sum_{l>0} ^\infty \gamma^{(-x+t+l)/h} \right \}\\
		& = C [\gamma^{|x-t|/h} + \gamma^{1/h}\{\gamma^{(x-t)/h}+ \gamma^{(t-x)/h} \} /(1-\gamma^{1/h})].
		\end{align*} 
		$(ii)$   Since $\gamma_h(x,\cdot)$ is differentiable everywhere except at $x$, it holds 
		\begin{align*}
		&\frac{1}{Nh}\sum_{i=1}^N \gamma_h(x,t_i)=   h^{-1}\int_0^1 \gamma_h(x,t) \, \text{d}t+\mathcal{O}\{(Nh)^{-1}\}\\
		&=  h^{-1}\int_0^1 \gamma^{|x-t|/h} \, \text{d}t + h^{-1}\frac{\gamma^{1/h}}{1-\gamma^{1/h}} \int_0^1\gamma^{(x-t)/h}+ \gamma^{(t-x)/h}  \,\text{d}t + \mathcal{O}\{(Nh)^{-1}\}.
		\end{align*}
		Note that 
		\begin{align*}
		&h^{-1}\int_0^1 \gamma^{|x-t|/h} \, \text{d}t =\{\gamma^{(1-x)/h} +\gamma^{x/h} -2\}/\log\gamma \\
		\text{and } &h^{-1}\int_0^1\gamma^{(x-t)/h}+ \gamma^{(t-x)/h}  \, \text{d}x =  \gamma^{-t/h} \{\gamma^{1/h} -1\}/\log\gamma -\gamma^{t/h} \{\gamma^{-1/h} -1\}/\log\gamma.
		\end{align*} 
		In particular, for some constants $C_1,C_2>0$ depending on $\gamma\in(0,1)$ but not on $h$ and $x$, it holds $h^{-1}\int_0^1 \gamma^{|x-t|/h} \, \text{d}t\leq C_1$ and  $h^{-1}\gamma^{1/h}(1-\gamma^{1/h})^{-1}\int_0^1\gamma^{(x-t)/h}+ \gamma^{(t-x)/h}  \, \text{d}x \leq C_2$. 
		The proof to show $h^{-1}\int_0^1\gamma_h(x,t)^2 \, \text{d}t\leq C_3$ for some constant $C_3>0$ is similar. By assumption $(Nh)^{-1}=o(1)$ for $N\to \infty$ which concludes the proof. 
		
		$(iii)$ See Lemma~15 of \citet{schwarzdiss} with $p=2q-1$. 
	\end{proof}

	The next lemma below states that the sum of the correlated gamma random variables in each bin can be rewritten as a sum of independent gamma random variables. 
	\begin{lemma} \label{lemma:rewriteBins}
		Let  $\Sigma=\Sigma(f)$ with $f\in\mathcal{F}_{\beta}$ and $\beta>0$. Then,  for $n,p\to \infty$ such that $p/n\to c\in(0,\infty]$ and for $p\to \infty$ and $n$ constant,  $Q_k=\sum_{j=(k-1)p/T+1}^{kp/T} \sum_{i=1}^n\tilde{W}_{i,j}$ $(k=1,...,T)$
		where $$  \tilde{W}_{i,j}\overset{indep.}{\sim}
		\Gamma \left\{1/2,  \,2f(x_j)+\mathcal{O}\left (T^{-1}+T^{-1}p^{1-\beta}\log p \right )\right\}
		$$
		for $i=1,...,n$ and $j=(k-1)m+1,...,km$, and $x_j=(j-1)/(2p-2)$.
	\end{lemma}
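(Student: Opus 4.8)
The plan is to work bin by bin and turn each bin-sum into a quadratic form in a Gaussian vector, which splits into independent $\chi^2_1$ pieces after an orthogonal diagonalisation. Fix a bin $k$, let $B_k=\{(k-1)p/T+1,\dots,kp/T\}$ be its index set (of size $p/T$), and for each $i=1,\dots,n$ stack the relevant DCT coordinates of $Y_i$ into $Z_i=(D_j^TY_i)_{j\in B_k}\in\mathbb R^{p/T}$. Since $D^TY_i\sim\mathcal N_p(0_p,D^T\Sigma D)$ and the $Y_i$ are i.i.d., the vectors $Z_i$ are independent across $i$ with $Z_i\sim\mathcal N_{p/T}(0_{p/T},\Sigma_k)$, where $\Sigma_k:=(D^T\Sigma D)_{B_k,B_k}$ is the principal submatrix indexed by $B_k$. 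By construction $\sum_{j\in B_k}W_{i,j}=\|Z_i\|^2$, hence $Q_k=\sum_{i=1}^n\|Z_i\|^2$.

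Next I would diagonalise. Write $\Sigma_k=O_k\Lambda_kO_k^T$ with $O_k$ orthogonal and $\Lambda_k=\mathrm{diag}(\ell_{k,1},\dots,\ell_{k,p/T})$ the (sorted) eigenvalues, and put $V_i:=O_k^TZ_i\sim\mathcal N_{p/T}(0_{p/T},\Lambda_k)$. Then the coordinates $V_{i,1},\dots,V_{i,p/T}$ are independent with $V_{i,l}\sim\mathcal N(0,\ell_{k,l})$, the $V_i$ are independent across $i$, and $\|Z_i\|^2=\|V_i\|^2=\sum_l V_{i,l}^2$. Since $V_{i,l}^2/\ell_{k,l}\sim\chi^2_1=\Gamma(1/2,2)$, each $V_{i,l}^2\sim\Gamma(1/2,2\ell_{k,l})$, and all $m=np/T$ of these variables are mutually independent. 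Thus $Q_k$ equals, almost surely, a sum of $m$ independent $\Gamma(1/2,\cdot)$ variables; relabelling $V_{i,l}^2$ as $\tilde W_{i,j}$ via a bijection $l\leftrightarrow j$ between eigenvalue indices and $B_k$ (chosen below) gives $Q_k=\sum_{j\in B_k}\sum_{i=1}^n\tilde W_{i,j}$ — note these $\tilde W_{i,j}$ need not equal the original correlated $W_{i,j}$, only their bin-sums agree, which is all that is used later. It then remains to show $2\ell_{k,l}=2f(x_j)+\mathcal O(T^{-1}+T^{-1}p^{1-\beta}\log p)$.

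This last step is a matrix-perturbation argument using Lemma~\ref{lemma:DCTdiag}, which gives $D^T\Sigma D=\mathrm{diag}\{f(x_j)\}_j+R$ with every entry of $R$ of order $\mathcal O(p^{-1}+p^{-\beta}\log p)$ uniformly in the two indices. Restricting to $B_k$ yields $\Sigma_k=D_k+E_k$ with $D_k=\mathrm{diag}\{f(x_j)\}_{j\in B_k}$ and $E_k$ inheriting the entrywise bound, so $\|E_k\|_{\mathrm{op}}\le\max_i\sum_{j}|(E_k)_{ij}|\le(p/T)\cdot\mathcal O(p^{-1}+p^{-\beta}\log p)=\mathcal O(T^{-1}+T^{-1}p^{1-\beta}\log p)$, uniformly over $k$. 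Since $\Sigma_k$, $D_k$, $E_k$ are symmetric, Weyl's inequality gives $|\lambda_r(\Sigma_k)-\lambda_r(D_k)|\le\|E_k\|_{\mathrm{op}}$ for every rank $r$, and $\lambda_r(D_k)$ is just the $r$-th largest of $\{f(x_j):j\in B_k\}$, attained at some $j(r)\in B_k$; taking $j(\cdot)$ as this bijective rank-to-index map completes the identification of the scale parameters, uniformly over $r$ and $k$. Since $f\in\mathcal F_\beta$ is bounded below by $\delta>0$ while $\|E_k\|_{\mathrm{op}}\to0$ (both when $n$ is fixed and when $n,p\to\infty$ with $p/n\to c$), all $\ell_{k,l}$ are eventually positive and the gammas are nondegenerate.

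The only genuinely delicate point is pinning the error rate to the stated form: the bound $\|E_k\|_{\mathrm{op}}\lesssim(p/T)(p^{-1}+p^{-\beta}\log p)$ is the crude maximal-row-sum estimate over a submatrix of size $p/T$, and one should check it reproduces exactly $T^{-1}+T^{-1}p^{1-\beta}\log p$ — the diagonalisation error of Lemma~\ref{lemma:DCTdiag} inflated by the bin width — with the first term dominating for $\beta\ge1$ and the second for $\beta<1$. Everything else (the sub-vector reduction, the $\chi^2_1$ identity, the two layers of independence, and the deterministic relabelling) is routine.
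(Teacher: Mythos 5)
Your argument is correct and reaches the same conclusion as the paper's, with the same error rate, but via a slightly streamlined route. The paper writes $Q_k = Y^T D E^{(km)} D Y$ with $E^{(km)}$ a zero-padded $p\times p$ projector, passes to $X=\Sigma^{-1/2}Y$, diagonalises the $p\times p$ matrix $\Sigma^{1/2} D E^{(km)} D \Sigma^{1/2}$, then reduces to an $m\times m$ block $B$ via a similarity transform and localises its eigenvalues by the Gershgorin circle theorem together with the H\"older continuity of $f$. You skip the zero-padding and the $\Sigma^{-1/2}$ detour by observing directly that the vector of bin coordinates $Z_i = (D_j^T Y_i)_{j\in B_k}$ is Gaussian with covariance $\Sigma_k = (D^T\Sigma D)_{B_k,B_k}$ (which coincides with the paper's $B$), and you localise the eigenvalues by Weyl's inequality after splitting $\Sigma_k$ into its diagonal and off-diagonal parts and bounding $\|E_k\|_{\mathrm{op}}$ by the maximal row-sum. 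Both localisation arguments yield the same order bound $\mathcal{O}(T^{-1}+T^{-1}p^{1-\beta}\log p)$; Weyl's inequality has the small advantage of producing an explicit rank-to-index bijection, whereas Gershgorin alone does not pair eigenvalues with diagonal entries and the paper must invoke the $\min\{1,\beta\}$-H\"older continuity of $f$ across the bin to pin down which $f(x_j)$ each eigenvalue is close to. Your observation that $\|E_k\|_{\mathrm{op}}\to 0$ under the stated asymptotics, guaranteeing positive scale parameters eventually, is a point the paper leaves implicit. The one place you should be slightly more careful is the row-sum bound: Lemma~\ref{lemma:DCTdiag} states the off-diagonal error carries the factor $\tfrac{1}{2}\{1+(-1)^{|i-j|}\}$, so about half the entries of $E_k$ vanish identically; this does not change the order of $\|E_k\|_{\mathrm{op}}$, but is worth noting for precision.
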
 
	\begin{proof} 
		It is sufficient to show the statement for $n=1$ by independence of the $Y_i$.  Then, the number of points per bin is $m=p/T$. For simplicity, the index $i$ is skipped in the following.  First, we write $Q_k$ as a matrix-vector product and refactor it so that it corresponds to a sum of independent scaled chi-squared random variables. In the second step, we calculate the scaling factors.  Let  $E^{(km)}$ be a diagonal matrix with ones on the  $(k-1)m+1,...,km$th entries and otherwise zero diagonal elements. Then, 
		$$ Q_k=\sum_{j=(k-1)m+1}^{km} W_{j}=  Y^T DE^{(km)}D Y.$$ 
		Define $X=\Sigma^{-1/2} Y\sim \mathcal{N}_p(0_p, I_p)$, where $I_p$ is the $p\times p$ identity matrix. Let $U^T\Gamma U$ be the eigendecomposition of $\Sigma^{1/2}  DE^{(jm)} D\Sigma^{1/2}$ with eigenvalues $(\lambda_j)_{j=1}^p$. The matrix $\Sigma^{1/2}$ is  invertible since $\Sigma$ invertible. Then,
		$$ Q_k=  X^T \Sigma^{1/2}  DE^{(km)} D \Sigma^{1/2} X= X^T{U}^T{\Gamma U}  X=\tilde{ X}^T\Gamma \tilde{ X} = \sum_{j=1}^p \lambda_j \tilde{ X}^2_j,$$
		where $\tilde{ X}=UX$. Since $ U$ is an orthogonal matrix, it follows that $\tilde{ X}\sim\mathcal{N}_p(0_p,I_p)$. In particular,  $Q_k$ is a sum of independent scaled  chi-squared random  variables $\tilde{W}_j=\lambda_j\tilde{X}^2_j\sim\Gamma( 1/2, 2\lambda_j),$ where the scaling factors are the eigenvalues $\lambda_j$. It remains to calculate the $\lambda_j$.
		For this we use that the matrix $\Sigma^{1/2} DE^{(km)} D \Sigma^{1/2}$ is similar to the matrix 
		$$A=D \Sigma^{-1/2} \left (\Sigma^{1/2} DE^{(km)}D\Sigma^{1/2} \right) \Sigma^{1/2}D=E^{(km)}D \Sigma D.$$
		In particular, $\Sigma^{1/2}D E^{(km)}D \Sigma^{1/2}$ has the same eigenvalues as $A$.
		The rows of $A$ are zero except for the $(k-1)m+1,...,km$th rows, which equal the $(k-1)m+1,...,km$th rows of $D \Sigma D$.
		By Lemma~1,
		$$\left ( D\Sigma D\right )_{i,j} = f(x_i)\delta_{i,j} + \frac{1+(-1)^{|i-j|}}{2} \mathcal{O}\left(p^{-1}+p^{-\beta} \log p \right ).$$ 
		Define $ B=(E^{(km)}D\Sigma DE^{(km)})_{r,s=(k-1)m+1}^{km}$. Then, $\lambda( A) = \lambda(B) \cup \{0\}$ and $ A$ has an eigenvalue zero with multiplicity at least $p-m$. Since $B$ is symmetric all eigenvalues are real-valued.	Application of the Gershgorin circle theorem  to $B$ yields that the remaining $m$ eigenvalues $\lambda_{(k-1)m+1},...,\lambda_{km}$ are contained in the following set
		\begin{align*}
		& \bigcup_{j=(k-1)m+1}^{km}  \left \{ z \in \mathbb{R}: \, | f(x_j) +\mathcal{O}\left(p^{-1}+p^{-\beta} \log p \right ) -z|= \mathcal{O}\left(mp^{-1}+mp^{-\beta} \log p \right ) \right \}.
		\end{align*} 
		Since $f$ is $\min\{\beta,1\}$-H\"older continuous, we have $\lambda_j= f(x_j)+\mathcal{O}(mp^{-1}+m p^{-\beta}\log p )$, for $j=(k-1)m+1,...,km$.  The proof is concluded using $T^{-1}=mp^{-1}$. 
	\end{proof}
	
	Finally, Lemma~\ref{lemma:representY} gives explicit bounds for the stochastic and deterministic errors of the variance-stabilizing transform. 
	\begin{lemma} \label{lemma:representY}
		If $\Sigma=\Sigma(f)$ with $f\in\mathcal{F}_{\beta}$ and $\beta>0$, then $Y^*_{k}=2^{-1/2}\log(Q_k/m)$ can be written as
		\begin{align*}
		& Y_{k}^*= H\left \{f \left (x_k \right)  \right \} +\epsilon_k+m^{-1/2}Z_k+ \xi_k, &(k=1,...,T)
		\end{align*}
		where $x_k=(k-1)/(2T-2),$ $|\epsilon_k|\lesssim (np)^{-1}+(np)^{-\beta}\log p, \, Z_k\sim\mathcal{N}(0,1),$ and $E(\xi_k)=0$. Furthermore,
		\begin{equation*}   \begin{aligned}
		&E|\xi_k|^\ell \lesssim (\log m )^{2\ell} \left \{m^{-\ell}+\left(T^{-1}+T^{-1} p^{1-\beta}\log p\right)^\ell \right\} & (\ell\in\mathbb{N}_{>1}),\\
		& \text{cov}(Z_k,Z_l)= \mathcal{O} \{ mp^{-2} + mp^{-2\beta}(\log p)^2\}& (k\neq l =1,...,T).	
		\end{aligned}
		\end{equation*} The results hold for  $n,p\to \infty$ such that $p/n\to c\in(0,\infty]$ and for $p\to \infty$ with $n$ constant.
	\end{lemma}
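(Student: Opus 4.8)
The plan is to adapt the variance-stabilizing-transform (VST) analysis of \citet{cai2010nonparametricfest} for gamma regression; the features going beyond their i.i.d.\ set-up are the unequal scale parameters of the summands inside each bin (supplied by Lemma~\ref{lemma:rewriteBins}) and the weak dependence between different bins. By Lemma~\ref{lemma:rewriteBins} we may treat $Q_k$ as $\sum_{i=1}^{m}A_i^{(k)}$, a sum of $m=np/T$ independent variables $A_i^{(k)}\sim\Gamma\{1/2,b_i^{(k)}\}$, and, using $f\geq\delta$, the bin length $\mathcal{O}(1/T)$ and the $\min\{1,\beta\}$-H\"older continuity of $f$, we may write $b_i^{(k)}=2\bar f_k\{1+\rho_i^{(k)}\}$, where $\bar f_k$ is the average of $f$ over bin $k$ and $\max_i|\rho_i^{(k)}|\lesssim T^{-1}+T^{-1}p^{1-\beta}\log p$. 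Introducing i.i.d.\ $\chi_i\sim\Gamma(1/2,2)$ and writing $A_i^{(k)}=\bar f_k(1+\rho_i^{(k)})\chi_i$ gives $Q_k/m=\bar f_k(S_k+S_k')/m$ with $S_k=\sum_i\chi_i\sim\chi^2_m$, $S_k'=\sum_i\rho_i^{(k)}\chi_i$ and $|S_k'/S_k|\leq\max_i|\rho_i^{(k)}|$ almost surely, so $2^{1/2}Y_k^*=\log\bar f_k+\log(S_k/m)+\log(1+S_k'/S_k)$. Since $E\log(S_k/m)=\phi(m/2)+\log(2/m)$, one has $2^{-1/2}\{\log\bar f_k+E\log(S_k/m)\}=H(\bar f_k)$ by the very definition of $H$, so the deterministic error is $\epsilon_k=E(Y_k^*)-H\{f(x_k)\}=2^{-1/2}E\{\log(1+S_k'/S_k)\}+[H(\bar f_k)-H\{f(x_k)\}]$, whose first piece is $\mathcal{O}(p^{-1}+p^{-\beta}\log p)$ --- use $E(S_k'/S_k)=m^{-1}\sum_i\rho_i^{(k)}$ and the trace identity $\tfrac{T}{p}\sum_{j\in\text{bin }k}(D^{T}\Sigma D)_{jj}=\bar f_k+\mathcal{O}(p^{-1}+p^{-\beta}\log p)$ obtained by averaging the diagonal estimate of Lemma~\ref{lemma:DCTdiag} --- while the second piece, the within-bin fluctuation of $f$, is of smaller order than $h^{\beta}$ (take $x_k$ at the bin midpoint when $\beta>1$) and is absorbed, with the Riemann-sum error, in the bias step of Proposition~\ref{prop:riskHf}; collecting these gives the order claimed for $\epsilon_k$.

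For the Gaussian term I would apply, bin by bin, a coupling inequality for sums of independent variables with finite exponential moments --- the Koml\'os--Major--Tusn\'ady-type construction used in \citet{cai2010nonparametricfest} --- to the honest sum $S_k$ of i.i.d.\ $\chi^2_1$ variables: there is $Z_k\sim\mathcal{N}(0,1)$, measurable with respect to $S_k$, such that $\log(S_k/m)-E\log(S_k/m)$ equals $\{\phi^{(1)}(m/2)\}^{1/2}Z_k$ up to a remainder that is $\mathcal{O}(\log m/m)$ on the bulk. Since $\phi^{(1)}(m/2)=2m^{-1}\{1+\mathcal{O}(m^{-1})\}$, rescaling by $2^{-1/2}$ and inserting a second-order expansion of $\log(1+S_k'/S_k)$ gives the asserted decomposition $Y_k^*=H\{f(x_k)\}+\epsilon_k+m^{-1/2}Z_k+\xi_k$, where $\xi_k$ collects the coupling remainder, the $\mathcal{O}(m^{-1})$ variance mismatch, the quadratic-and-higher Taylor remainders of $\log(1+\cdot)$, and the centred part of $\log(1+S_k'/S_k)$. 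The moment bounds $E|\xi_k|^{\ell}\lesssim(\log m)^{2\ell}\{m^{-\ell}+(T^{-1}+T^{-1}p^{1-\beta}\log p)^{\ell}\}$ then follow from the subexponential concentration of $S_k-m$ --- truncating at the $\log m$ scale produces the $(\log m)^{2\ell}$ factor and the $m^{-\ell}$ term, while the far-tail event $\{|S_k-m|>cm\}$ is controlled directly by the explicit moment generating function of $|\log\Gamma|$ from the proof of Proposition~\ref{prop:riskf} --- together with $|\log(1+S_k'/S_k)|\leq2\max_i|\rho_i^{(k)}|$, which yields the $(T^{-1}+T^{-1}p^{1-\beta}\log p)^{\ell}$ term.

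Finally, for the covariance, each $Z_k$ is a deterministic, essentially linear (on the bulk) function of $Q_k=\sum_{i=1}^{n}Y_i^{T}DE^{(km)}DY_i$, so $\text{cov}(Z_k,Z_l)$ equals $\text{cov}(Q_k,Q_l)/\{\text{var}(Q_k)\text{var}(Q_l)\}^{1/2}$ up to lower-order corrections. The covariance-of-quadratic-forms identity gives $\text{cov}(Q_k,Q_l)=2n\,\text{tr}(\Sigma DE^{(km)}D\Sigma DE^{(lm)}D)=2n\sum_{a\in\text{bin }l,\,b\in\text{bin }k}(D^{T}\Sigma D)_{ab}^{2}$, using $D^{T}=D$ and $D^2=I$; since $k\neq l$ every such $(a,b)$ is off-diagonal, so $(D^{T}\Sigma D)_{ab}=\mathcal{O}(p^{-1}+p^{-\beta}\log p)$ by Lemma~\ref{lemma:DCTdiag}, while $\text{var}(Q_k)\asymp m$; hence $\text{cov}(Q_k,Q_l)=\mathcal{O}\{m^{2}(p^{-2}+p^{-2\beta}(\log p)^{2})\}$ and $\text{cov}(Z_k,Z_l)=\mathcal{O}\{mp^{-2}+mp^{-2\beta}(\log p)^{2}\}$. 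The step I expect to be the main obstacle is precisely this dependence bookkeeping: one must run the bin-wise couplings on a common probability space so that the $Z_k$ genuinely inherit the weak dependence of the $Q_k$ without the comparatively crude coupling remainders swamping the tiny target covariance, and one must make the moment bounds for $\xi_k$ uniform in the integer $\ell$, which forces a careful three-way split into the bulk, the moderate-deviation zone (where the $(\log m)^{2\ell}$ factor is generated) and the far tail.
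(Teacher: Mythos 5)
Your proposal follows the paper's skeleton faithfully: use Lemma~\ref{lemma:rewriteBins} to reduce $Q_k$ to a sum of $m$ independent gamma variables with nearly equal scale parameters, apply a KMT-type quantile coupling bin by bin to produce $Z_k$, collect all remainders into $\xi_k$ and bound its moments by separating a bulk/moderate-deviation zone from the far tail, and finally control the cross-bin covariance of the $Z_k$'s. Your rewriting $A_i^{(k)}=\bar f_k(1+\rho_i^{(k)})\chi_i$ and $Q_k/m=\bar f_k(S_k+S'_k)/m$ is a more explicit version of what the paper accomplishes by directly citing Theorem~1 of \citet{cai2010nonparametricfest} and Lemma~4 and Theorem~1 of \citet{brown2010nonparametric}, which already handle binned gamma observations with unequal means via the parameter $\theta=\max_{j,l}|E\tilde W_{1,j}-E\tilde W_{1,l}|$. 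That part of your argument is sound and aligned with the paper.

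The genuine gap is the covariance bound, and you correctly flagged it as the main obstacle but did not fill it. You write that $Z_k$ is ``a deterministic, essentially linear (on the bulk) function of $Q_k$, so $\mathrm{cov}(Z_k,Z_l)$ equals $\mathrm{cov}(Q_k,Q_l)/\{\mathrm{var}(Q_k)\mathrm{var}(Q_l)\}^{1/2}$ up to lower-order corrections.'' This does not stand on its own: the quantile coupling $Z_k=\Phi^{-1}\{F_{\bar Q}(\bar Q_k)\}$ is genuinely nonlinear, the target covariance is of order $mp^{-2}$, and the ``lower-order corrections'' are precisely the quantity to be controlled. The paper's resolution is to invoke Hoeffding's covariance identity to compare $\mathrm{cov}(Z_k,Z_l)$ with $\mathrm{cov}(\bar Q_k,\bar Q_l)$ directly: writing both covariances as double integrals of $F_{Z,Z}-\Phi\Phi$ and $F_{\bar Q,\bar Q}-F_{\bar Q}F_{\bar Q}$ respectively, one shows (i) $\mathrm{cov}(\bar Q_k,\bar Q_l)\geq 0$ so that $F_{Z,Z}-\Phi\Phi\geq 0$ pointwise (via the Gaussian copula identity), and (ii) the density ratio $f_{\bar Q}\{F_{\bar Q}^{-1}(u)\}/\phi\{\Phi^{-1}(u)\}$ is uniformly bounded on $(1/2,1)$ by monitoring the mode of the gamma density, after which a symmetry/folding argument gives $\mathrm{cov}(Z_k,Z_l)\leq 2c^2\,\mathrm{cov}(\bar Q_k,\bar Q_l)$. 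Your quadratic-form computation $\mathrm{cov}(Q_k,Q_l)=2n\sum_{a,b}(D\Sigma D)_{ab}^2$ with $(D\Sigma D)_{ab}=\mathcal O(p^{-1}+p^{-\beta}\log p)$ off the diagonal is a concrete and correct way to get the right order for $\mathrm{cov}(\bar Q_k,\bar Q_l)$, so if you graft the Hoeffding step onto it you close the loop; without it, the comparison from the $\bar Q$-scale to the $Z$-scale is missing.

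A further minor point: you decompose $\epsilon_k$ into a within-bin fluctuation of $f$ plus a trace-based bias, and defer part of the former to the bias bound of Proposition~\ref{prop:riskHf}. The paper's definition $\epsilon_k=H\{f(x_k)\}-H\{\tilde f(x_k^*)\}$ is purely deterministic and is bounded directly by Lipschitz continuity of $\log$ and H\"older continuity of $f$; you should keep the deterministic and stochastic pieces separate so that $E(\xi_k)=0$ holds exactly, rather than pushing a deterministic remainder into $\xi_k$.
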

	\begin{proof} In the regression setting of \citet{cai2010nonparametricfest} and \citet{brown2010nonparametric} the gamma random variables in each bin are independent. Using Lemma~\ref{lemma:rewriteBins}, we can rewrite our setting into one with independent observations per bin. Thus, the first step of the proof  is to apply  the results of \citet{cai2010nonparametricfest} and \citet{brown2010nonparametric}  on the approximation of $Y^*_k$ by a Gaussian random variable $m^{-1/2}Z_k$ and a non-Gaussian remainder $\xi_k$.  Since in our setting the observations of different bins $i\neq j$ are correlated,  the covariance between the Gaussian random variables is derived in the second part of the proof.	
		
		By Lemma~\ref{lemma:rewriteBins}, $Q_k$ can be written as sum of $m=np/T$ independent gamma random variables with  mean function $\tilde{f}(x)=f(x)+S(x)$, where $S(x)=\mathcal{O}(T^{-1}+T^{-1}p^{1-\beta}\log p )$. 
		It follows from Lemma~1 and \ref{lemma:rewriteBins} that the $S$ term  depends continuously on $x\in[0,1]$ and that a global constant hidden in the $\mathcal{O}$ term exists which is independent of the $x$.
		Furthermore, $f\in[\delta, M_0]\subset \mathbb{R}_{>0}$ by assumption. As $\tilde{f}$ is the mean of gamma random variables, it holds that $\tilde{f}>0$. Thus,  $\tilde{f}$ is continuous and takes values in a compact set $[\epsilon,\nu] \subset \mathbb{R}_{>0}$ of the support of the gamma distribution. By Theorem~1 of \citet{cai2010nonparametricfest} for the gamma distribution 
		it follows $$Y_{k}^*=H\left \{\tilde{f}(x_k^* )\right\} +m^{-1/2}Z_{k}+\xi_{k}  \quad (k=1,...,T),$$ 
		where $(k-1)/(2T-2)\leq x_k^* \leq k/(2T-2)$, $Z_{k}\sim \mathcal{N}(0,1)$, and  $\xi_{k}$ is a mean zero random variable.   $Z_k$ is defined via quantile coupling. It is the Gaussian approximation to the standardized version of $Q_k$ where the $W_{ij}$ in the $k$th bin are assumed to be i.i.d. In particular, $Z_k$ approximates $$\bar{Q}_{k}=\frac{\sum_{j=(k-1)p/T+1}^{kp/T} \sum_{i=1}^n \bar{W}_{i,j}-m\tilde{f}(x_k^*)}{(2m)^{\frac{1}{2}}\tilde{f}(x_k^*)},$$
		where $\bar{W}_{i,j}\overset{\text{i.i.d.}}{\sim}\Gamma(1/2,2\tilde{f}(x_k^*))$ and such that $\mbox{cov}(\bar{W}_{i,j},\bar{W}_{i,h})=\mbox{cov}({W}_{i,j},{W}_{i,h})$ for $j=(k-1)p/T+1,...,kp/T$ and $h\in\{1,...,p\}\setminus\{(k-1)p/T+1,...,kp/T\}$.  
		
		Let $\theta$ be the maximum difference of the observations' means in each bin. Then,
		$$\theta=\max_{k=1,...,T} \max_{j,l =(k-1)p/T+1,...,kp/T} \left | E[ \tilde{W}_{1,j}]-E[ \tilde{W}_{1,l}] \right| \leq c_1\left(T^{-1}+T^{-1}p^{1-\beta}\log p\right).$$ 
		Application of Lemma~4 and Theorem~1 of \citet{brown2010nonparametric} yield that $\xi_{k}$ satisfies   $$E[|\xi_{k}|^\ell] \lesssim (\log m)^{2\ell}  \{m^{-\ell}+(T^{-1}+T^{-1} p^{1-\beta}\log p)^\ell  \}$$ for any constant integer $\ell>0$ and where the hidden constant depends on $\ell$.
		Next, 
		$$ Y_{k}^*= H\left \{f \left (\frac{k-1}{2T-2} \right)  \right \} +\epsilon_k+m^{-1/2}Z_k+ \xi_k \quad (k=1,...,T),$$
		where $\epsilon_k=H [f \left \{(k-1)/(2T-2) \right\}  ]-H \{\tilde{f} \left (x_k^* \right) \}$ and $|\epsilon_k|\lesssim (np)^{-1}+(np)^{-\beta}\log p $ by the Lipschitz continuity of the logarithm and the $\alpha$-H\"older continuity of $f$. 
		
		To compute $\mbox{cov}(Z_k,Z_l)$ for $k\neq l$ we use  Hoeffding's covariance identity.  In particular, we show that $\mbox{cov}(Z_k,Z_l)\lesssim \mbox{cov}(\bar{Q}_k,\bar{Q}_l)$. The claim then follows from 
		\begin{align*}
		0\leq \mbox{cov}(\bar{Q}_k,\bar{Q}_l)
		&=\frac{m\mathcal{O}\{\mbox{cov}(W_{i,j},W_{i,h})\}}{2\tilde{f}(x_k^*)\tilde{f}(x_l^*)}=\mathcal{O}(mp^{-2}+mp^{-2\beta}\log p ),
		\end{align*} where $j=(k-1)p/T+1,...,kp/T$ and $h=(l-1)p/T+1,...,lp/T$.
		
		Let $\Phi$ and $F_{\bar{Q}} $ denote the  cumulative distribution functions of $Z_k,\, Z_l,\, \bar{Q}_k$ and $ \bar{Q}_l$. Since the $Z_k$ are defined via quantile coupling, it holds $Z_k=\Phi^{-1}\{F_{\bar{Q}}(\bar{Q}_k)\}$, see  \citet{komlos1975approximation, mason2012quantile}.
		Furthermore, define the uniform random variables $U_k=\Phi(Z_k)=F_{\bar{Q}}(\bar{Q}_k)$ and $U_l=\Phi(Z_l)=F_{\bar{Q}}(\bar{Q}_l)$ with cumulative distribution function $F_U$. The corresponding joint cumulative distribution functions are denoted by $F_{Z,Z},\, F_{\bar{Q},\bar{Q}},\, F_{U,U}$. By Hoeffding's covariance identity,
		\begin{align*}
		\mbox{cov}(\bar{Q}_k,\bar{Q}_l)&= \int _{\mathbb {R} }\int _{\mathbb {R} }F_{\bar{Q},\bar{Q}}(q,r)-F_{\bar{Q}}(q)F_{\bar{Q}}(r)\,\text{d}q\,\text{d}r\\
		&=\int _{\mathbb {R} }\int _{\mathbb {R} }\left\{F_{Z,Z}(x,y)-\Phi(x)\Phi(y)\right\} \frac{\phi(x)}{f_{\bar{Q}}\{F_{\bar{Q}}^{-1}(x)\}}\frac{\phi(y)}{f_{\bar{Q}}\{F_{\bar{Q}}^{-1}(y)\}}\,\text{d}x\,\text{d}y.
		\end{align*} 
		Let $\rho=\mbox{cov}(Z_k,Z_l)$. Then, the identity
		$$F_{ZZ}(x,y)=\frac{1}{2\pi} \int_0^\rho \frac{1}{(1-r^2)^{1/2}} \exp \left \{ -\frac{x^2-2rxy+y^2}{2(1-r^2)}\right \}\,\text{d}r +\Phi(x)\Phi(y)$$
		implies $F_{Z,Z}(x,y)-\Phi(x)\Phi(y)\geq 0\, \text{ for all } x,y\in \mathbb{R}$ if and only if $ \rho\geq 0$, see Equation 9.3.1.3 of \citet{patel1996handbook}.
		Since $\mbox{cov}(\bar{Q}_k,\bar{Q}_l)\geq 0$ and the ratio of two densities is non-negative,  it follows $\mbox{cov}(Z_k,Z_l)\geq 0$.
		Furthermore,
		\begin{align*}
		\mbox{cov}(Z_l,Z_k)&= \int _{\mathbb {R} }\int _{\mathbb {R} }F_{Z,Z}(x,y)-\Phi(x)\Phi(y)\,\text{d}x\,\text{d}y\\
		&= \int_0^1 \int_0^1 \left\{F_{UU}(u,v)-F_U(u)F_U(v) \right\} \frac{1}{\phi\{\Phi^{-1}(u)\}}\frac{1}{\phi\{\Phi^{-1}(v)\}}\,\text{d}u\,\text{d}v\\
		&\leq  \int_{1/2}^1 \int_{1/2}^1 \left\{F_{UU}(u,v)-F_U(u)F_U(v) \right\} \frac{1}{\phi\{\Phi^{-1}(u)\}}\frac{1}{\phi\{\Phi^{-1}(v)\}}\,\text{d}u\,\text{d}v \\
		&\, + \int_0^1 \int_0^{1/2} \left\{F_{UU}(u,v)-F_U(u)F_U(v) \right\} \frac{2}{\phi\{\Phi^{-1}(u)\}}\frac{1}{\phi\{\Phi^{-1}(v)\}}\,\text{d}u\,\text{d}v.
		\end{align*} 
		The last integral corresponds to $E(Z_kZ_l^-)=\mbox{cov}(Z_k,Z_l)/2$ where $Z_l^-{=}Z_l\mathbbm{1}\{Z_l\leq0\}$. It remains to show that it exists a $c>0$ such that $f_{\bar{Q}}\{F_{\bar{Q}}^{-1}(u)\}/\phi\{\Phi^{-1}(u)\}\leq   c$ for all $u\in (1/2,1)$. Then, it follows
		\begin{align*}
		\mbox{cov}(Z_l,Z_k)
		&\leq 2c^2 \int_0^1 \int_0^1 \left\{F_{UU}(u,v)-F_U(u)F_U(v) \right\} \frac{1}{f_{\bar{Q}}\{F_{\bar{Q}}^{-1}(u)\}}\frac{1}{f_{\bar{Q}}\{F_{\bar{Q}}^{-1}(v)\}}\,\text{d}u\,\text{d}v\\
		&=2c^2\mbox{cov}(\bar{Q}_k,\bar{Q}_l).
		\end{align*}
		Furthermore, for  $u\in (1/2,1)$ we have
		\begin{align}
		\frac{\partial}{\partial u}\frac{f_{\bar{Q}}\{F_{\bar{Q}}^{-1}(u)\}}{\phi\{\Phi^{-1}(u)\}}
		&= \frac{f^\prime_{\bar{Q}}\{F_{\bar{Q}}^{-1}(u)\}}{f_{\bar{Q}}\{F_{\bar{Q}}^{-1}(u)\}\phi\{\Phi^{-1}(u)\}} -\frac{\Phi^{-1}(u)f_{\bar{Q}}\{F_{\bar{Q}}^{-1}(u)\}}{\phi\{\Phi^{-1}(u)\}^2} \label{derivative}.
		\end{align} Since $\Phi^{-1}(u)>0$ for $u\in(1/2,1)$, the second term is  positive. To see that the first term is  negative for $u\in (1/2,1)$ we rewrite  $f_{\bar{Q}}$ and $F^{-1}_{\bar{Q}}$ with respect to the non-normalized gamma random variable $X=\sum_{j=(k-1)p/T+1}^{kp/T} \sum_{i=1}^n \bar{W}_{i,j}\sim\Gamma(m/2, 2\tilde{f}(x_k^*))$. Hence,
		\begin{align*}
		&f_{\bar{Q}}(x)=\sigma f_{X}(x\sigma +\mu),\quad  & F_{\bar{Q}}^{-1}(u)=\sigma^{-1}\{F^{-1}_{X}(u)-\mu\},
		\end{align*} where  $\sigma$ and $\mu$ are the  standard deviation and the mean of $X$. 
		Since the mode of $A\sim\Gamma(a,b)$ is at $b (a-1)$ and $x\sigma +\mu=2\tilde{f}(x_k)(m/2-1)$ implies $x=-(2/m)^{1/2}$, it follows
		that $f_{\bar{Q}}(x)$ is monotone decreasing for $x\geq-(2/m)^{1/2}$. Furthermore, $F_{\bar{Q}}(-(m/2)^{1/2})\leq 0.5$ for all $m\in\mathbb{N}$ as $f_{\bar{Q}}(x)$ is right-skewed.
		In particular, $-(m/2)^{1/2}\leq F^{-1}_{\bar{Q}}(1/2)$ for all $m\in\mathbb{N}$.  Finally, since $f_{\bar{Q}}(-(2/m)^{1/2})\to \phi(0)$  for $m\to \infty$ there is a constant $c>0$ not depending on $m$ such that
		\begin{equation*} 
		\frac{f_{\bar{Q}}\{F_{\bar{Q}}^{-1}(u)\}}{\phi\{\Phi^{-1}(u)\}}\leq\frac{f_{\bar{Q}}\{-(2/m)^{1/2}\}}{\phi(0)}  \leq c.
		\end{equation*} 
	\end{proof}
	
	\newpage 
	\section{Supplementary Material}
	\subsection{Simulation Study with Gamma Distribution}
	\label{app:nogauss_simulation}
	
	\subsubsection{Data simulation}
	The observations follow  a centered gamma distribution with covariance matrices $ \Sigma_1,\, \Sigma_2,\, \Sigma_3$ of examples ${1,\, 2,\, 3}$, i.e., $ Y_i=\Sigma^{1/2}_jZ_i \,(j=1,3;i=1,...,n),$ where $Z_i=(z_{i,1},....,z_{i,p})^T$ with $z_{i,1}+2^{1/2},...,z_{i,p}+2^{1/2} \overset{\text{i.i.d.}}{\sim} \Gamma(2,2^{-1/2})$. For example $2$, the parameter \texttt{innov} of the R function \texttt{arima.sim} is used to pass the innovations $z_{i,1}+2^{1/2},...,z_{i,p}+2^{1/2} \overset{\text{i.i.d.}}{\sim} \Gamma(2,2^{-1/2})$.  The resulting process is multiplied by $(1.2)^{1/2}$. 
	
	\subsubsection{Derivation of the variance-stabilizing transform function}
	The covariance of $DY_i$ is $\text{cov}(DY_i)=D \Sigma D$ and the marginal distribution of $D_j^TY_i$ follows a centered gamma distribution with variance $f(x_j)+o(1)$. Thus, $D_j^TY_i+E(D_j^TY_i)\sim \Gamma(2,\theta)$ with $\theta=[\{f(\pi x_j)+o(1)\}/2]^{1/2}$. 
	Recall that the density of $A\sim\Gamma(k,\theta)$ and its centered version $B=A-E[A]$  are given by
	
	$$ {\displaystyle g_A(x)={\frac {1}{\Gamma (k)\theta ^{k}}}x^{k-1}e^{-x/\theta }}\mathbbm{1}\{x\geq 0\}$$
	$$g_B(x) ={\frac {1}{\Gamma (k)\theta ^{k}}}(x+k\theta)^{k-1}e^{-(x+k\theta)/\theta }\mathbbm{1}\{x\geq-k\theta\}.$$
	It follows that the density of $B^2$ is then
	\begin{align*}
	g_{B^2}(x)&=\frac{g_B(x^{1/2})+g_B(-x^{1/2})}{2x^{1/2}} \mathbbm{1}\{x\geq 0\} \\
	&= \frac {1}{\Gamma (k)\theta ^{k}} \frac{(x^{1/2}+k\theta)^{k-1}e^{-(x^{1/2}+k\theta)/\theta}}{2x^{1/2}}\mathbbm{1}\{x\geq 0\}  \\
	& \quad +  \frac {1}{\Gamma (k)\theta ^{k}} \frac{ (-x^{1/2}+k\theta)^{k-1}e^{-(-x^{1/2}+k\theta)/\theta}} {2x^{1/2}} \mathbbm{1}\{ -(k\theta)<-x^{1/2}<0\}  \\
	\end{align*}
	With this density one obtains $E(B^2)=k\theta^2=$ and $\text{var}(B^2)= c_k(k\theta)^2$ for some explicit constant $c_k$ depending on $k$.
	Hence, $E(W_{i,j})=E\{(D_j^TY_i)^2\}=f(\pi x_j)+o(1)$ and $\text{var}\{(D_j^TY_i)^2\}=5\{f(\pi x_j)+o(1)\}^2$. The corresponding variance-stabilizing function
	$G:\mathbb{R} \to \mathbb{R}$ satisfies $G^{\prime}(\mu)=V^{-1/2}(\mu)$, where $V(\mu)$ is the variance as a function of the mean $\mu$.
	In particular, for the distribution of $W_{i,j}$ one obtains $G(\mu)=5^{-1/2}\log(\mu)$ with $\mu=f(x_j)+o(1)$. The QQ-plots in Fig.~\ref{qqGamma} confirm the correctness of the log-transform to be  the variance-stabilizing function. The transformed data $Y_k^*, (k=1,...,T)$ for all three examples look very similar to normally distributed random variables with homogeneous variance.
	
	\subsubsection{Results} 
	Since the variance-stabilizing function of the $W_{i,j}$ is a scaled logarithm, our method can be applied to estimate $\Sigma$ from the (centered) gamma-distributed data $Y_1,...,Y_n$. The tuning parameters of the tapering estimator and for our method are chosen the same as for the setting with Gaussian data, see Section~\ref{sec:simulation}. The bin number $T=500$ was selected  from the QQ-plots, see also Fig.~\ref{qqUniform} in the next section. Table \ref{tab_Simulation_A_gamma}--\ref{tab_Simulation_C_gamma} show  the results for  (A) $p=5000,\, n=1$, (B) $p=1000,\, n=50$ and  (C) $p=5000,\, n=10$, respectively.
	
	\begin{table}[H]
		\centering
		\scriptsize
		\def~{\hphantom{0}}
		\caption{ (A) $p=5000,\, n=1$: Errors of the Toeplitz covariance matrix and the spectral density estimators with respect to the spectral and the $L_2$ norm;  average computation time of the covariance estimators in seconds for one Monte Carlo sample is in the last column; all numbers multiplied with 100 except the last column }{
			\begin{tabular}{lccccccc}
				&\multicolumn{2}{c}{ Process (1)} &\multicolumn{2}{c}{ Process (2) } &\multicolumn{2}{c}{ Process (3)} &time  \\
				& $\|\widehat{\Sigma}-\Sigma\|^2$ & $\|\hat{f}-f\|^2_2$ &  $\|\widehat{\Sigma}-\Sigma\|^2$ & $\|\hat{f}-f\|^2_2$ &  $\|\widehat{\Sigma}-\Sigma\|^2$ & $\|\hat{f}-f\|^2_2$ & in sec \\[5pt]	
				our method (GCV) & 1.181 & 0.434 & 2.285 & 0.609 & 5.182 & 0.884 & 4.238 \\ 
				our method (ML) & 0.973 & 0.391 & 2.121 & 0.581 & 4.787 & 0.819 & 4.241 \\ 
				tapering (CV) & 0.951 & 0.372 & 2.626 & 0.803 & 4.156 & 1.151 & 4.682 \\ 
				sample covariance & 16750.719 & 3380.642 & 20195.498 & 3631.043 & 24828.431 & 3597.585 & 0.350 \\ 
				our method (GCV-oracle) & 0.794 & 0.346 & 1.701 & 0.516 & 4.172 & 0.757 & \\ 
				our method (ML-oracle) & 0.820 & 0.356 & 1.965 & 0.550 & 4.789 & 0.812 &\\ 
				tapering (oracle) & 0.759 & 0.335 & 1.313 & 0.443 & 1.991 & 0.539 &  \\ 
		\end{tabular}}
		\label{tab_Simulation_A_gamma}
	\end{table}
	
	\begin{table}[H]
		\centering
		\scriptsize
		\def~{\hphantom{0}}
		\caption{ (B) $p=1000,\, n=50$: Errors of the Toeplitz covariance matrix and the spectral density estimators with respect to the spectral and the $L_2$ norm;  average computation time of the covariance estimators in seconds for one Monte Carlo sample is in the last column; all numbers multiplied with 100 except the last column }{
			\begin{tabular}{lccccccc}
				&\multicolumn{2}{c}{ Process (1)} &\multicolumn{2}{c}{ Process (2) } &\multicolumn{2}{c}{ Process (3)} &time  \\
				& $\|\widehat{\Sigma}-\Sigma\|^2$ & $\|\hat{f}-f\|^2_2$ &  $\|\widehat{\Sigma}-\Sigma\|^2$ & $\|\hat{f}-f\|^2_2$ &  $\|\widehat{\Sigma}-\Sigma\|^2$ & $\|\hat{f}-f\|^2_2$ & in sec \\[5pt]	
				our method (GCV) & 0.125 & 0.042 & 0.457 & 0.142 & 0.590 & 0.095 & 23.169 \\ 
				our method (ML) & 0.106 & 0.038 & 0.525 & 0.151 & 0.736 & 0.106 & 23.093 \\ 
				tapering (CV) & 0.122 & 0.040 & 0.497 & 0.154 & 0.410 & 0.087 & 24.444 \\ 
				sample covariance & 80.845 & 57.839 & 103.125 & 61.602 & 126.620 & 60.425 & 0.155 \\ 
				our method (GCV-oracle) & 0.087 & 0.033 & 0.390 & 0.134 & 0.552 & 0.090 & \\ 
				our method (ML-oracle) & 0.095 & 0.035 & 0.515 & 0.150 & 0.731 & 0.105 & \\ 
				tapering (oracle) & 0.074 & 0.032 & 0.371 & 0.137 & 0.307 & 0.068 & \\
		\end{tabular}}
		\label{tab_Simulation_B_gamma}
	\end{table}

	\begin{table}[H]
		\centering
		\scriptsize
		\def~{\hphantom{0}}
		\caption{ (C) $p=5000,\, n=10$: Errors of the Toeplitz covariance matrix and the spectral density estimators with respect to the spectral and the $L_2$ norm;  average computation time of the covariance estimators in seconds for one Monte Carlo sample is in the last column; all numbers multiplied with 100 except the last column }{
			\begin{tabular}{lccccccc}
				&\multicolumn{2}{c}{ Process (1)} &\multicolumn{2}{c}{ Process (2) } &\multicolumn{2}{c}{ Process (3)} &time  \\
				& $\|\widehat{\Sigma}-\Sigma\|^2$ & $\|\hat{f}-f\|^2_2$ &  $\|\widehat{\Sigma}-\Sigma\|^2$ & $\|\hat{f}-f\|^2_2$ &  $\|\widehat{\Sigma}-\Sigma\|^2$ & $\|\hat{f}-f\|^2_2$ & in sec \\[5pt]	
				our method (GCV) & 0.138 & 0.042 & 0.386 & 0.129 & 0.626 & 0.099 & 4.347 \\ 
				our method (ML) & 0.109 & 0.037 & 0.441 & 0.137 & 0.700 & 0.105 & 4.352 \\ 
				tapering (CV) & 0.175 & 0.047 & 0.485 & 0.142 & 0.508 & 0.092 & 680.471 \\ 
				sample covariance & 651.720 & 352.037 & 846.298 & 380.690 & 1061033.779 & 381.164 & 1.202 \\ 
				our method (GCV-oracle) & 0.091 & 0.033 & 0.330 & 0.121 & 0.557 & 0.091 &  \\ 
				our method (ML-oracle) & 0.097 & 0.034 & 0.434 & 0.136 & 0.697 & 0.105 &  \\ 
				tapering (oracle) & 0.076 & 0.031 & 0.307 & 0.120 & 0.305 & 0.068 & \\ 	
		\end{tabular}}
		\label{tab_Simulation_C_gamma}
	\end{table}

	\subsection{Simulation Study with Uniform Distribution}

	\subsubsection{Data simulation}
	The observations are generated as follows: set $ Y_i=\Sigma^{1/2}_jZ_i \,(i=1,...,n;j=1,3)$ where  $\Sigma_j$  is the Toeplitz covariance matrix of example $j$ and  $Z_i=(z_{i,1},...,z_{i,p})^T$ with $z_{i,1},...,z_{i,p} \overset{\text{i.i.d.}}{\sim} \text{Unif}[-3^{1/2},3^{1/2}]$. For example $2$, the parameter \texttt{innov} of the R function \texttt{arima.sim} is used to pass the innovations $X_1,...,X_n \overset{\text{i.i.d.}}{\sim} \text{Unif}[-3^{1/2},3^{1/2}]$. The resulting process is multiplied by $(1.2)^{1/2}$. 
	
	
	\subsubsection{Variance-stabilizing transform function}
	For this example the variance-stabilizing transform function cannot be derived explicitly. The QQ-plots in Fig.~\ref{qqUniform} indicate that the $\log$-transform is variance-stabilizing for $(D_j^tY_i)^2$ for all three examples, as the transformed data $Y_k^*\, (k=1,...,T)$ with $T=500$ look very similar to normally distributed random variables with homogeneous variance. 
	Thus, our method can be applied to estimate $\Sigma$ from the data $Y_1,...,Y_n$. 
	\subsubsection{Results}
	Tables \ref{tab_Simulation_A_uniform}--\ref{tab_Simulation_C_uniform}  show the results for  (A) $p=5000,\, n=1$, (B) $p=1000,\, n=50$ and  (C) $p=5000,\, n=10$,  respectively.
	The tuning parameters of the tapering estimator and for our method are chosen the same as for the setting with Gaussian data, see Section~\ref{sec:simulation}.

	\begin{table}[H]
		\centering
		\scriptsize
		\def~{\hphantom{0}}
		\caption{ (A) $p=5000,\, n=1$: Errors of the Toeplitz covariance matrix and the spectral density estimators with respect to the spectral and the $L_2$ norm;  average computation time of the covariance estimators in seconds for one Monte Carlo sample is in the last column; all numbers multiplied with 100 except the last column }{
			\begin{tabular}{lccccccc}
				&\multicolumn{2}{c}{ Process (1)} &\multicolumn{2}{c}{ Process (2) } &\multicolumn{2}{c}{ Process (3)} &time  \\
				& $\|\widehat{\Sigma}-\Sigma\|^2$ & $\|\hat{f}-f\|^2_2$ &  $\|\widehat{\Sigma}-\Sigma\|^2$ & $\|\hat{f}-f\|^2_2$ &  $\|\widehat{\Sigma}-\Sigma\|^2$ & $\|\hat{f}-f\|^2_2$ & in sec \\[5pt]	
				our method (GCV) & 0.737 & 0.230 & 1.952 & 0.488 & 4.010 & 0.616 & 4.282 \\ 
				our method (ML) & 0.572 & 0.190 & 1.560 & 0.422 & 3.991 & 0.610 & 4.279 \\ 
				tapering (CV) & 0.490 & 0.163 & 2.282 & 0.703 & 2.843 & 0.769 & 4.873 \\ 
				sample covariance & 17131.995 & 3599.128 & 19077.308 & 4031.457 & 26960.006 & 3770.437 & 0.355 \\ 
				our method (GCV-oracle) & 0.392 & 0.139 & 1.271 & 0.367 & 3.280 & 0.543 &  \\ 
				our method (ML-oracle) & 0.417 & 0.147 & 1.461 & 0.398 & 3.903 & 0.599 &\\ 
				tapering (oracle) & 0.371 & 0.130 & 1.022 & 0.314 & 1.463 & 0.342 &  \\ 
		\end{tabular}}
		\label{tab_Simulation_A_uniform}
	\end{table}
	
	\begin{table}[H]
		\centering
		\scriptsize
		\def~{\hphantom{0}}
		\caption{ (B) $p=1000,\, n=50$: Errors of the Toeplitz covariance matrix and the spectral density estimators with respect to the spectral and the $L_2$ norm;  average computation time of the covariance estimators in seconds for one Monte Carlo sample is in the last column; all numbers multiplied with 100 except the last column }{
			\begin{tabular}{lccccccc}
				&\multicolumn{2}{c}{ Process (1)} &\multicolumn{2}{c}{ Process (2) } &\multicolumn{2}{c}{ Process (3)} &time  \\
				& $\|\widehat{\Sigma}-\Sigma\|^2$ & $\|\hat{f}-f\|^2_2$ &  $\|\widehat{\Sigma}-\Sigma\|^2$ & $\|\hat{f}-f\|^2_2$ &  $\|\widehat{\Sigma}-\Sigma\|^2$ & $\|\hat{f}-f\|^2_2$ & in sec \\[5pt]	
				our method (GCV) & 0.090 & 0.024 & 0.397 & 0.116 & 0.682 & 0.092 & 31.574 \\ 
				our method (ML) & 0.079 & 0.021 & 0.452 & 0.123 & 0.766 & 0.096 & 31.473 \\ 
				tapering (CV) & 0.134 & 0.030 & 0.424 & 0.126 & 0.423 & 0.074 & 23.444 \\ 
				sample covariance & 80.230 & 56.741 & 102.424 & 61.282 & 123.505 & 58.847 & 0.136 \\ 
				our method (GCV-oracle) & 0.057 & 0.016 & 0.332 & 0.106 & 0.567 & 0.078 &  \\ 
				our method (ML-oracle) & 0.066 & 0.018 & 0.443 & 0.122 & 0.757 & 0.094 &  \\ 
				tapering (oracle) & 0.047 & 0.015 & 0.318 & 0.110 & 0.260 & 0.052 & \\ 	
		\end{tabular}}
		\label{tab_Simulation_B_uniform}
	\end{table}
	
	\begin{table}[H]
		\centering
		\scriptsize
		\def~{\hphantom{0}}
		\caption{ (C) $p=5000,\, n=10$: Errors of the Toeplitz covariance matrix and the spectral density estimators with respect to the spectral and the $L_2$ norm;  average computation time of the covariance estimators in seconds for one Monte Carlo sample is in the last column; all numbers multiplied with 100 except the last column }{
			\begin{tabular}{lccccccc}
				&\multicolumn{2}{c}{ Process (1)} &\multicolumn{2}{c}{ Process (2) } &\multicolumn{2}{c}{ Process (3)} &time  \\
				& $\|\widehat{\Sigma}-\Sigma\|^2$ & $\|\hat{f}-f\|^2_2$ &  $\|\widehat{\Sigma}-\Sigma\|^2$ & $\|\hat{f}-f\|^2_2$ &  $\|\widehat{\Sigma}-\Sigma\|^2$ & $\|\hat{f}-f\|^2_2$ & in sec \\[5pt]	
				our method (GCV) & 0.090 & 0.023 & 0.353 & 0.112 & 0.568 & 0.079 & 4.292 \\ 
				our method (ML) & 0.074 & 0.019 & 0.412 & 0.119 & 0.635 & 0.085 & 4.285 \\ 
				tapering (CV) & 0.127 & 0.025 & 0.437 & 0.124 & 0.427 & 0.066 & 638.830 \\ 
				sample covariance & 662.955 & 365.878 & 824.441 & 379.398 & 1049580.980 & 386.305 & 1.205 \\ 
				our method (GCV-oracle) & 0.060 & 0.016 & 0.314 & 0.105 & 0.478 & 0.070 &  \\ 
				our method (ML-oracle) & 0.066 & 0.017 & 0.408 & 0.118 & 0.626 & 0.085 &  \\ 
				tapering (oracle) & 0.043 & 0.013 & 0.299 & 0.105 & 0.230 & 0.046 &  \\ 
		\end{tabular}}
		\label{tab_Simulation_C_uniform}
	\end{table}

	\subsection{QQ-Plots} \label{qqplots}
	In this section, QQ-plots of the transformed data $Y_k^*\, (k=1,...,T)$ for the simulated data are shown. The QQ-plots are displayed for scenario (A) $p=5000$, $n=1$  as it contains fewer data points than scenario (B) and (C) and thus the largest Gaussian approximation error for the transformed data $Y_k^*$.
	Figure~\ref{qqGauss} demonstrates the impact of the bin number $T$ on the transformed  data $Y_k^*$ for example (1) with Gaussian data $Y$ (see simulation study in Section~6 of the main article). The plots justify the choice of $T=500$ for the simulation study, as in this case the $Y_k^*$ are very close to Gaussian distributed random variables. Figure~\ref{qqGamma} and \ref{qqUniform} show the  QQ-plots of $Y_k^*$ for example 1--3 with bin number $T=500$, where the original data $Y$ is gamma distributed or uniform distributed, respectively. The plots look similar to the QQ-plots in Fig.~\ref{qqGauss} for the Gaussian data $Y$ and indicate that our method to for Toeplitz covariance estimation is applicable also for these non-Gaussian data.   
	
	\begin{figure}[H]
		\begin{subfigure}[h]{0.32\textwidth}
			\includegraphics[width=\linewidth]{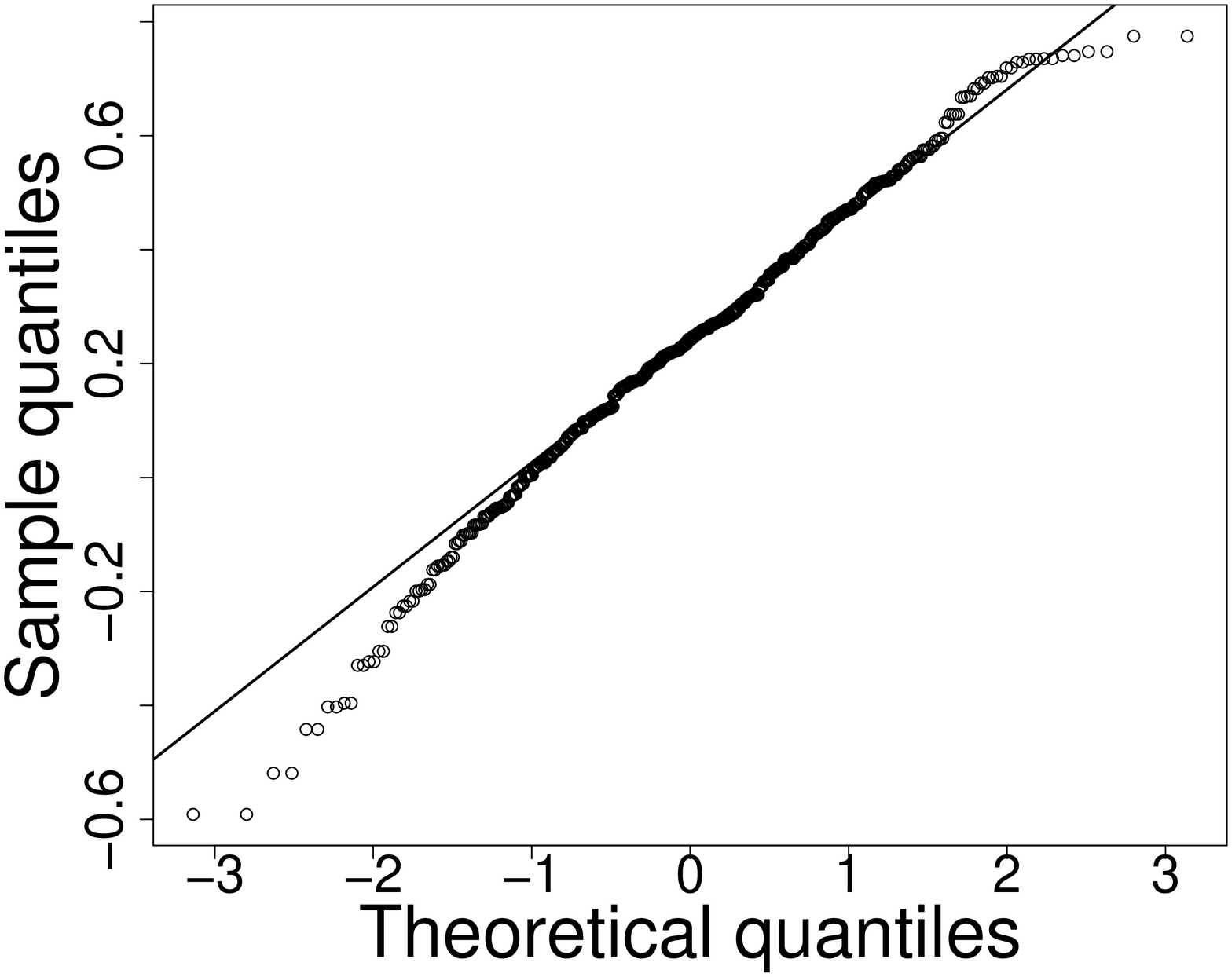} 
			\caption*{ example (1), $T=300$ }
		\end{subfigure}\hfill
		\begin{subfigure}[h]{0.32\textwidth}
			\includegraphics[width=\linewidth]{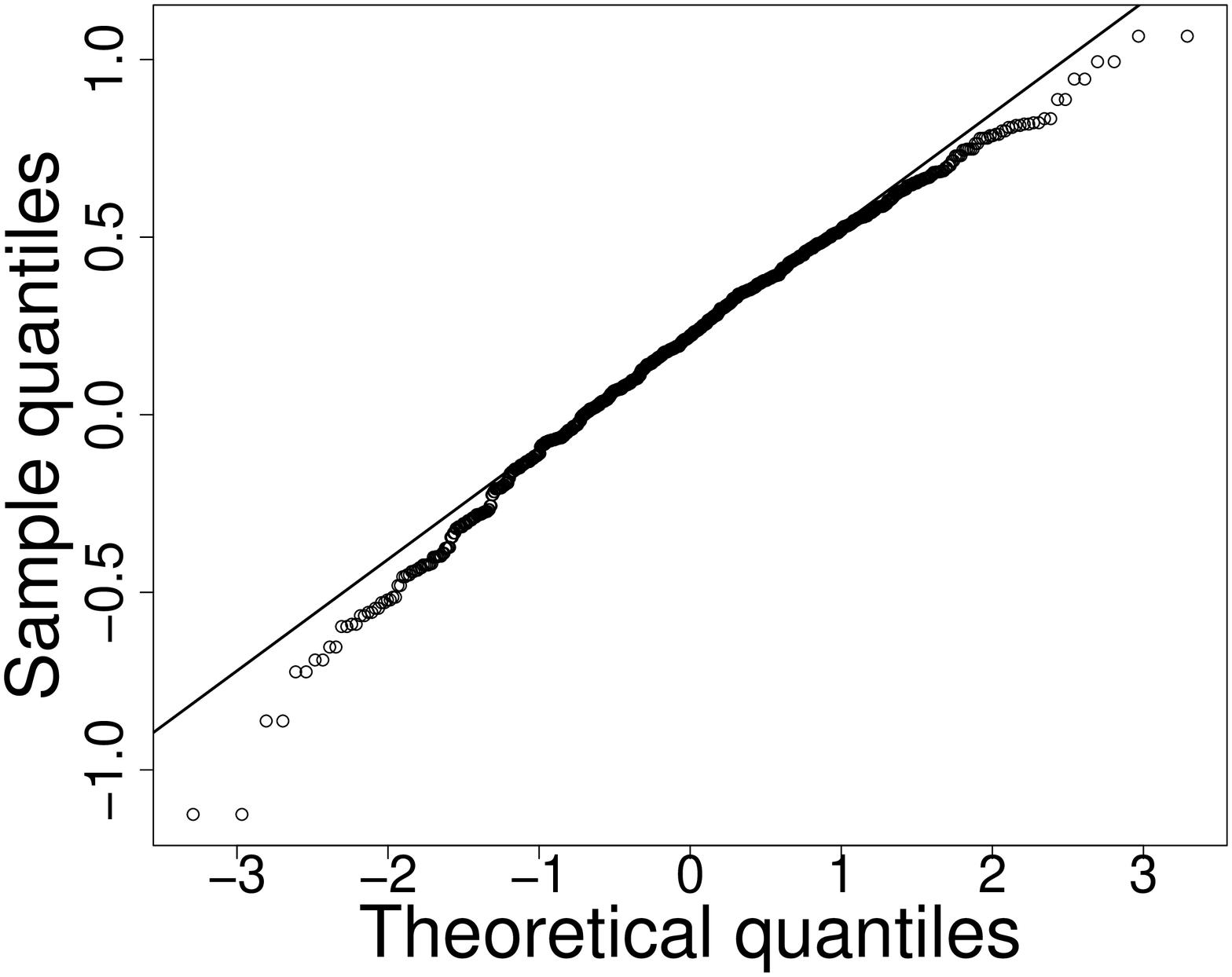}
			\caption*{ example (1), $T=500$}
		\end{subfigure}\hfill
		\begin{subfigure}[h]{0.32\textwidth}
			\includegraphics[width=\linewidth]{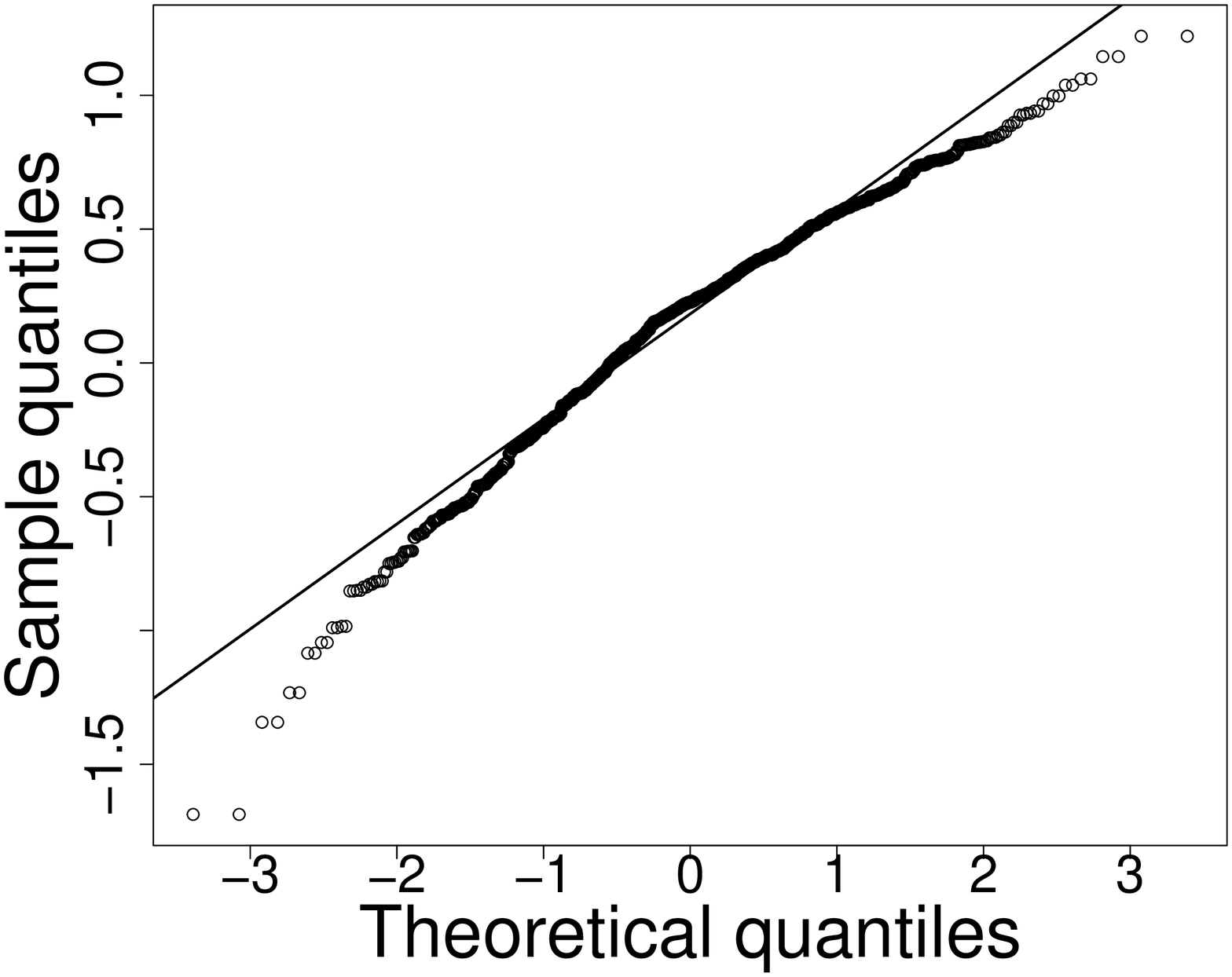}
			\caption*{  example (1), $T=700$ }
		\end{subfigure}\hfill
		\caption{Simulation study with Gaussian data; QQ-plots of binned and transformed data $Y_k^*\, (k=1,...,2T-2)$ with different bin numbers $T$ for example (1) with $p=5000,\, n=1$.  }
		\label{qqGauss}
	\end{figure}
	
	\begin{figure}[H]
		\begin{subfigure}[h]{0.32\textwidth}
			\includegraphics[width=\linewidth]{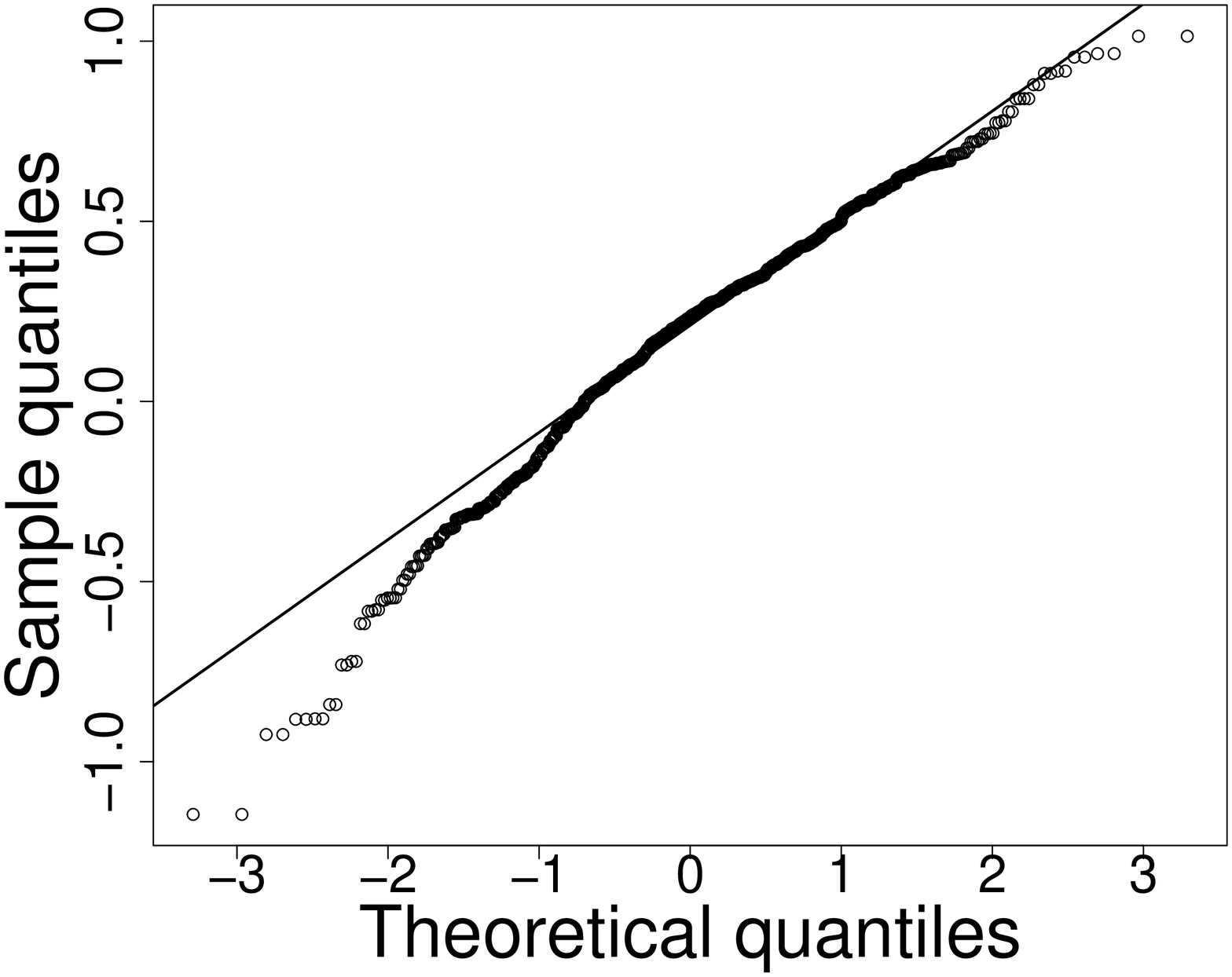} 
			\caption*{  example (1), $T=500$ }
		\end{subfigure}\hfill
		\begin{subfigure}[h]{0.32\textwidth}
			\includegraphics[width=\linewidth]{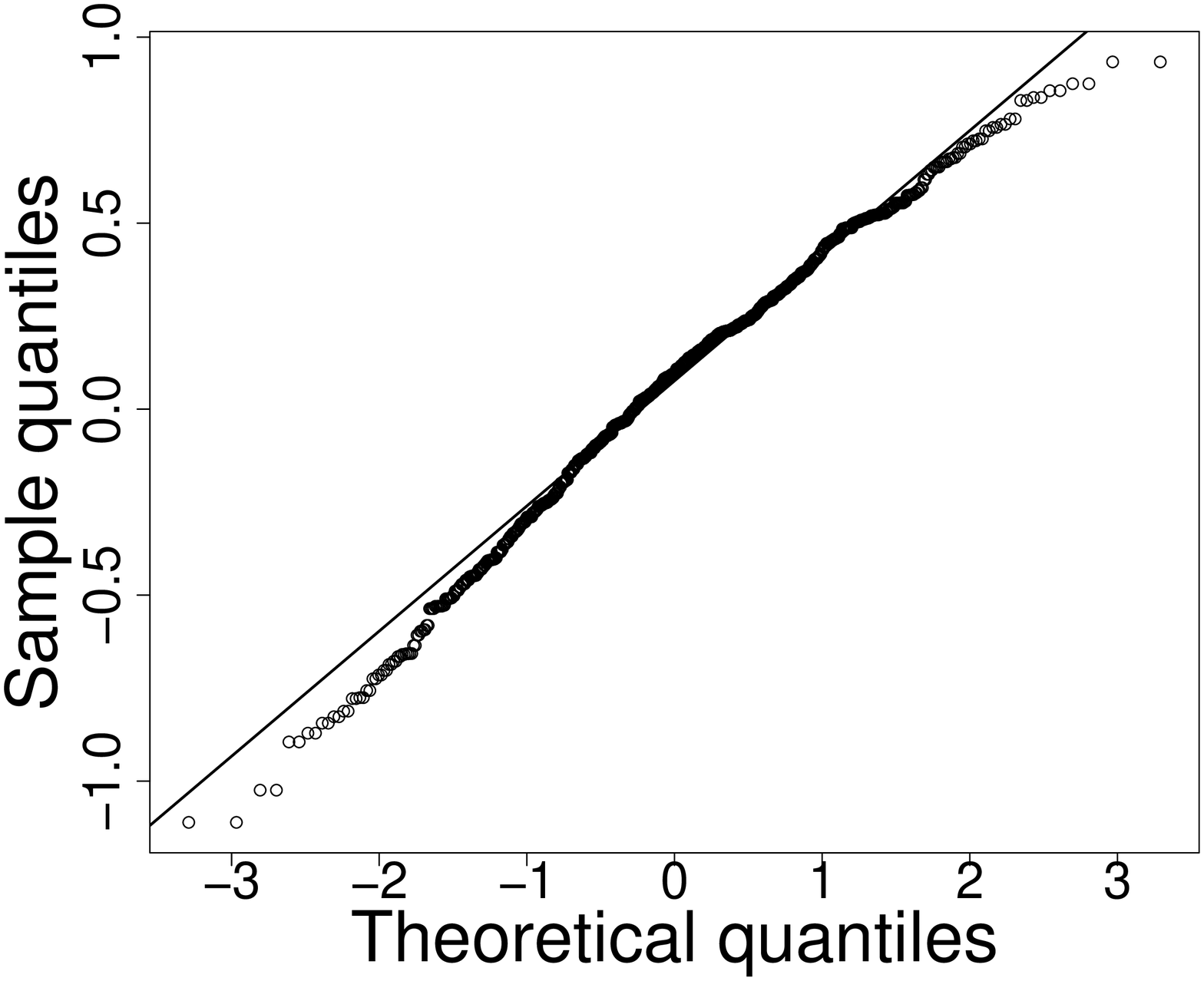} 
			\caption*{  example (2), $T=500$ }
		\end{subfigure}\hfill
		\begin{subfigure}[h]{0.32\textwidth}
			\includegraphics[width=\linewidth]{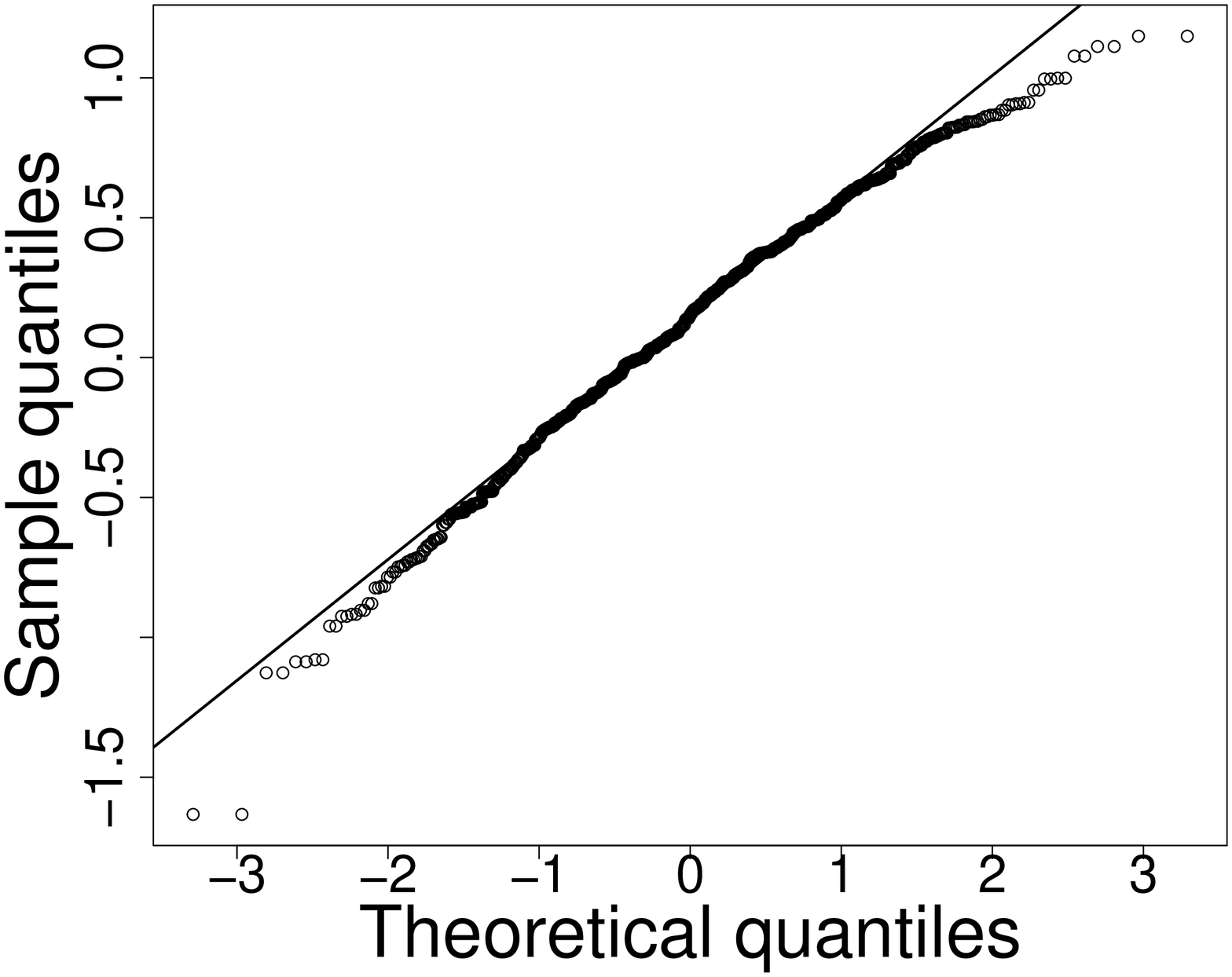} 
			\caption*{  example (3), $T=500$ }
		\end{subfigure}\hfill
		\caption{Simulation study with gamma data; QQ-plots of binned and transformed data $Y_k^*\, (k=1,...,2T-2)$ with $T=500$ for examples (1)--(3) with $p=5000,\, n=1$.  }
		\label{qqGamma}
	\end{figure}
	\begin{figure}[H]
		\begin{subfigure}[h]{0.32\textwidth}
			\includegraphics[width=\linewidth]{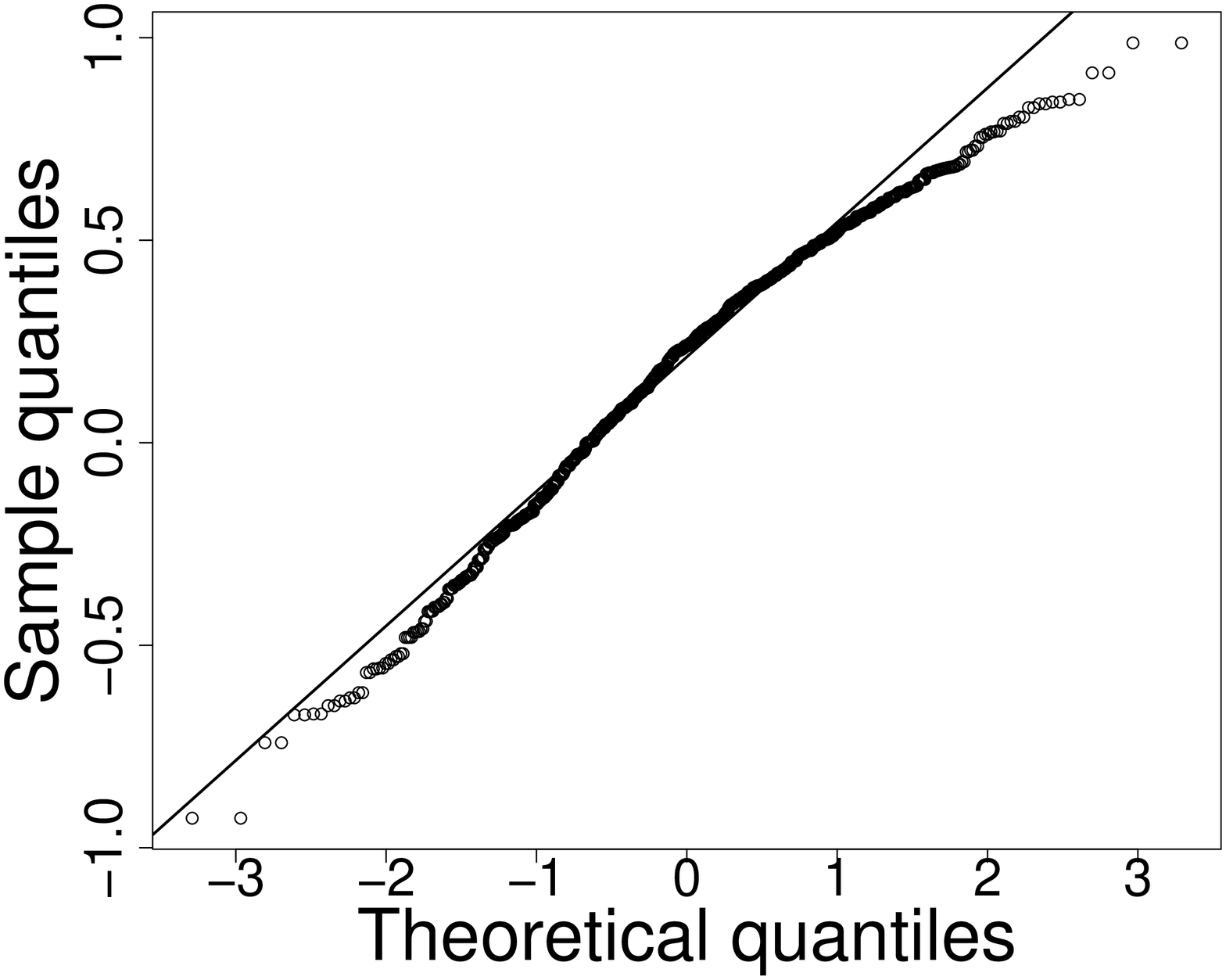} 
			\caption*{  example (1), $T=500$ }
		\end{subfigure} \hfill
		\begin{subfigure}[h]{0.32\textwidth}
			\includegraphics[width=\linewidth]{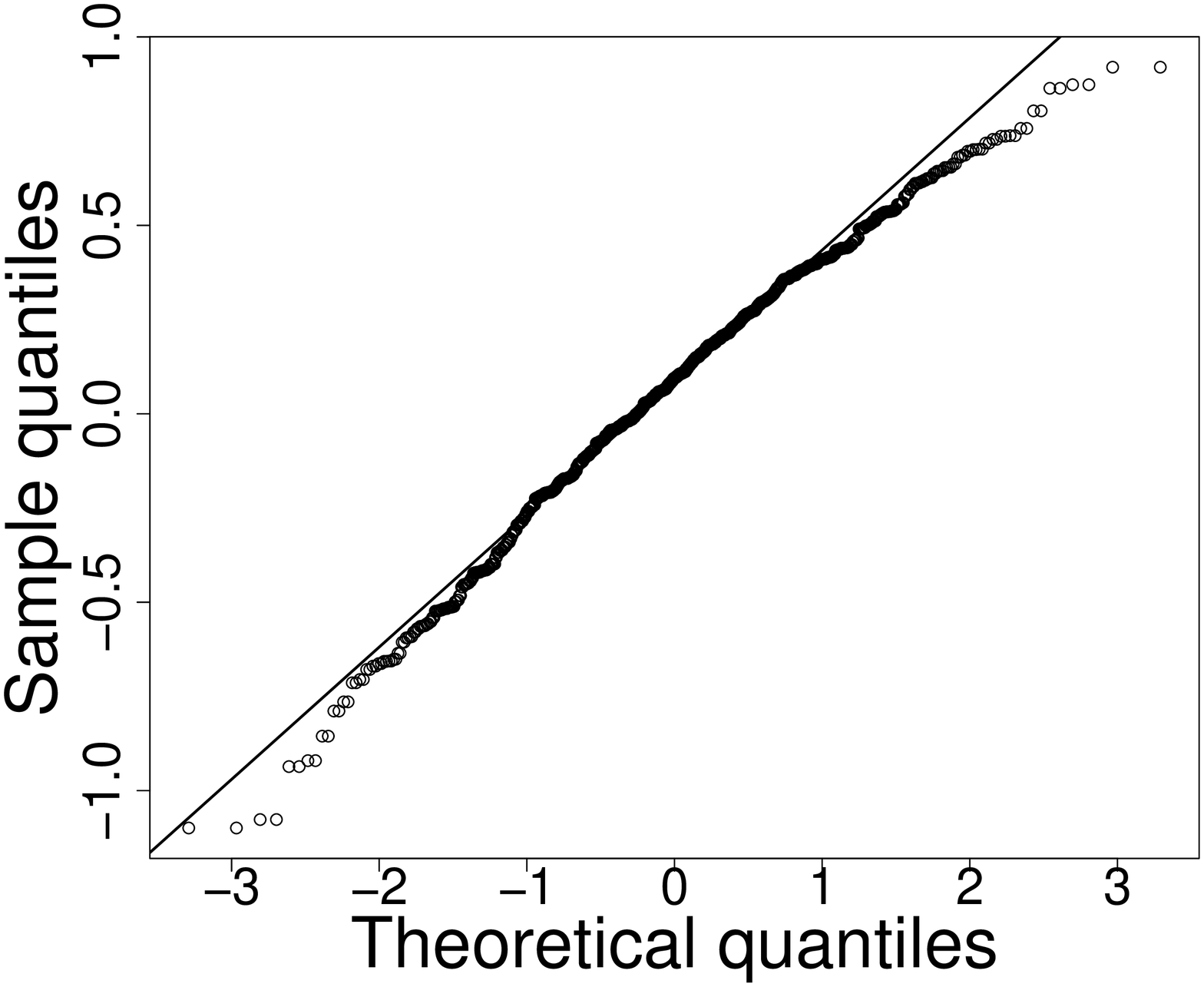} 
			\caption*{  example (2), $T=500$ }
		\end{subfigure} \hfill
		\begin{subfigure}[h]{0.32\textwidth}
			\includegraphics[width=\linewidth]{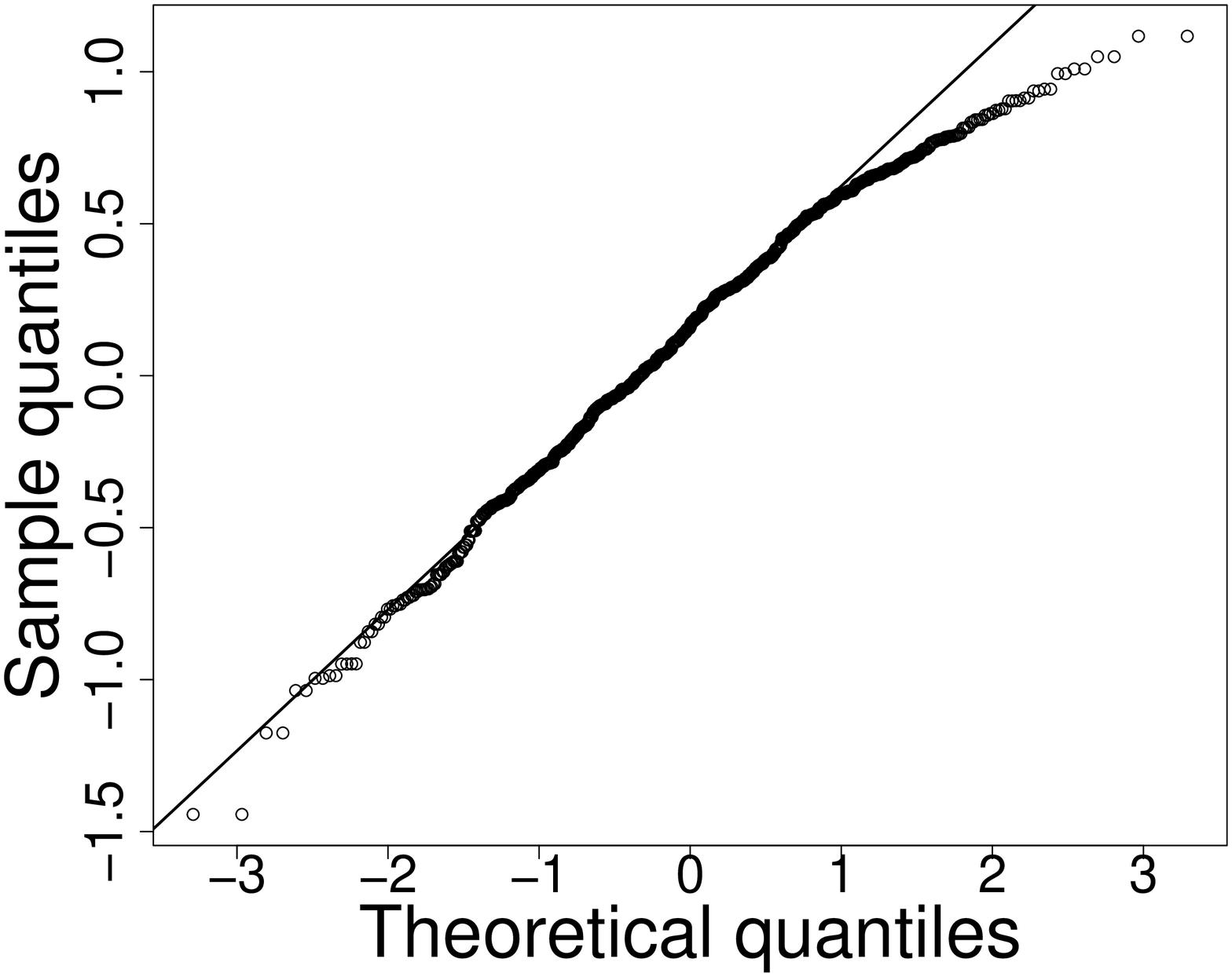} 
			\caption*{  example (3), $T=500$ }
		\end{subfigure} \hfill
		\caption{Simulation study with uniform data; QQ-plots of binned and transformed data $Y_k^*\, (k=1,...,2T-2)$ with $T=500$ for examples (1)--(3) with $p=5000,\, n=1$.  }
		\label{qqUniform}
	\end{figure}


\end{appendix}
\bibliographystyle{apalike}
\bibliography{Literature}

\end{document}